\newcommand{\bb} {\mathbb}
\newcommand{\cal}{\mathcal}
\newcommand{\frk}{\mathfrak}
\newcommand{\ep}{\epsilon}
\newcommand{\ve}{\varepsilon}
\newcommand{\vac}[1]{\left|\, #1\, \right>}
\newcommand{\loc}{\text{loc}}
\newcommand{\ch}{\operatorname{ch}}
\newcommand{\Coh}{\operatorname{Coh}}
\newcommand{\Hom}{\operatorname{Hom}}
\newcommand{\Res}{\mathop{\operatorname{Res}}}
\newcommand{\Vir}{\mathcal{V}ir}
\newcommand{\dd}{\oalign{\raisebox{0.4em}[0.5em][0em]{$\circ$} \cr
                         \raisebox{0.15em}[0.5em][0em]{$\circ$}}}
\newcommand{\nord}[1]{\dd #1 \dd}
\theoremstyle{plain}
 \newtheorem{thm}{Theorem}[section]
 \newtheorem{lem}[thm]{Lemma}
 \newtheorem{prop}[thm]{Proposition}
 \newtheorem{cor}[thm]{Corollary}
 \newtheorem*{thm*}{Theorem}
\theoremstyle{definition}
 \newtheorem{dfn}[thm]{Definition}
 \newtheorem{fct}[thm]{Fact}
 \newtheorem{rmk}[thm]{Remark}
\theoremstyle{remark}
\numberwithin{equation}{section}
\begin{document}


\title{Whittaker vector of deformed Virasoro algebra and Macdonald symmetric functions}

\author{Shintarou Yanagida}
\address{Research Institute for Mathematical Sciences,
Kyoto University, Kyoto 606-8502, Japan}
\email{yanagida@kurims.kyoto-u.ac.jp}

\date{February 12, 2014}


\begin{abstract}
We give a proof of Awata and Yamada's conjecture for 
the explicit formula of Whittaker vector 
of the deformed Virasoro algebra realized in the Fock space.
The formula is expressed as a summation over 
Macdonald symmetric functions with factored coefficients.
In the proof we fully use currents appearing in the Fock representation 
of Ding-Iohara-Miki quantum algebra.
We also mention an interpretation of Whittaker vector 
in terms of the geometry of the Hilbert schemes of points on the affine plane.
\end{abstract}

\maketitle
\tableofcontents

\section{Introduction}

In \cite{AY} 
Awata and Yamada studied an analog of 
the Alday-Gaiotto-Tachikawa relation \cite{AGT}
in five dimensions. 
They conjectured that Nekrasov's instanton partition function of 
$5D$ $N = 1$ pure $\mathrm{SU}(2)$ gauge theory 
coincides with the inner product of the Gaiotto-like state \cite{G}
in the deformed Virasoro algebra.

Let us briefly explain the notions appearing here.
The deformed Virasoro algebra $\Vir_{q,t}$ was introduced in \cite{SKAO} 
as the (topological) algebra generated by the current 
$T(z)=\sum_{n\in\bb{Z}}T_n z^{-n}$ satisfying the relation
$$
 f(w/z) T(z) T(w) -T(w)T(z) f(z/w) = 
 -\dfrac{(1-q)(1-t^{-1})}{1-q/t}
 \Bigl(\delta( q t^{-1} w/z) - \delta(t q^{-1} w/z)\Bigr)
$$
with $f(x) := \exp(\sum_{n\ge1}(1-q^n)(1-t^{-n})/(1+q^n/t^n)\cdot x^n/n)$.

The algebra $\Vir_{q,t}$ has Verma module, an analogue of Verma modules of 
triangular decomposed Lie algebras.
It is $\bb{Z}_{\ge0}$-graded and generated by a highest weight vector.
The Verma module with highest weight $h$ is denoted by $M_h$
and its $n$-th grading part is denoted by $M_{h,n}$.
The completion of $M_h$ with respect to the $\bb{Z}_{\ge0}$-grading is denoted by $\widehat{M}_h$.

Let $L$ be an element of $\bb{F}$.
The Gaiotto-like state in $\Vir_{q,t}$
is an element $v_G$ of the completed Verma module $\widehat{M}_h$ 
with the property
\begin{align}\label{eq:intro:whit}
 T_1 v_G = L v_G, \qquad 
 T_n v_G = 0 \quad (n \ge 2).
\end{align}
One immediately finds that $v_G$ is of the form 
$v_G = \sum_{n\ge0}L^n v_{G,n}$ (if it exists), 
where $v_{G,n} \in M_{h,n}$ is independent of $L$.
Thus the choice of $L$ is inessential.

The Gaiotto-like state turned out to be an analogue of the Whittaker vector 
for finite dimensional Lie algebra \cite{K}
as the condition \eqref{eq:intro:whit} implies.
So let us call \eqref{eq:intro:whit} the Whittaker condition,
and call the vector $v_G$ the Whittaker vector for $\Vir_{q,t}$.

Along the way of checking the five dimensional relation,
Awata and Yamada also discovered an explicit formula \cite[(3.18)]{AY}
of $v_G$ realized in the Fock representation.
The formula is expressed in a form of summation over 
Macdonald symmetric functions with factored coefficients.

\begin{thm*}
The Whittaker vector $v_G$ of the deformed Virasoro algebra exists uniquely,
and in the Fock representation, 
its $n$-th degree part is expressed by 
\begin{align*}
v_{G,n} =
  \sum_{\lambda \, \vdash n} P_{\lambda}(q,t) \gamma_\lambda,
\quad
  \gamma_\lambda =
  \prod_{s \in \lambda}
  \dfrac{k}{1 - k^2 q^{j(s)} t^{-i(s)}}
  \dfrac{q^{a_\lambda(s)}}{1 - q^{1 + a_\lambda(s)} t^{l_\lambda(s)}}
\end{align*}
\end{thm*}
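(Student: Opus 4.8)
The plan is to pass to the Fock representation, reduce the Whittaker condition to a system of linear recursions in the Macdonald basis, and verify that the proposed coefficients $\gamma_\lambda$ solve it. First I would unwind the grading: writing $v_G=\sum_{n\ge0}L^n v_{G,n}$ and comparing the coefficients of $L^n$ on the two sides of \eqref{eq:intro:whit}, the Whittaker condition becomes $v_{G,0}=|h\rangle$ together with, for every $n\ge1$,
\begin{align*}
 T_1 v_{G,n}=v_{G,n-1},\qquad T_m v_{G,n}=0\quad(2\le m\le n).
\end{align*}
In the Fock representation the degree-$n$ part $M_{h,n}$ has the Macdonald functions $\{P_\lambda(q,t):\lambda\vdash n\}$ as a basis, and since $\gamma_\emptyset=1$ matches $v_{G,0}=|h\rangle$, it suffices to show that $v_{G,n}:=\sum_{\lambda\vdash n}P_\lambda(q,t)\gamma_\lambda$ satisfies these relations. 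Existence is then immediate, and uniqueness follows from the absence of singular vectors at generic highest weight, which makes $v\mapsto(T_m v)_{m\ge1}$ injective on each $M_{h,n}$ with $n\ge1$.

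The main computational tool is the free-field realization $T(z)=\Lambda_1(z)+\Lambda_2(z)$ of \cite{SKAO}, whose two vertex operators I would express through the Ding-Iohara-Miki currents $\eta(z)$, $\xi(z)$ and the Cartan currents $\varphi^{\pm}(z)$ acting on the same boson Fock space. The decisive input is the action of these currents on the Macdonald basis: $\eta(z)$ and $\xi(z)$ act by adding and by removing a single box, with coefficients that are explicit products over the arm and leg data $a_\lambda(s)$, $l_\lambda(s)$, while $\varphi^{\pm}(z)$ act diagonally with eigenvalue a product of content factors $\prod_{s\in\lambda}(1-k^2 q^{j(s)}t^{-i(s)})^{\pm1}$, where $k$ encodes the highest weight through the $T_0$-eigenvalue. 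This is exactly the data visible in $\gamma_\lambda$: the factor $\prod_{s}k/(1-k^2 q^{j(s)}t^{-i(s)})$ is governed by the Cartan eigenvalue, whereas $\prod_{s}q^{a_\lambda(s)}/(1-q^{1+a_\lambda(s)}t^{l_\lambda(s)})$ equals the Macdonald normalization $q^{n(\lambda')}/c'_\lambda(q,t)$. Extracting the modes $T_m$ from $\Lambda_1(z)+\Lambda_2(z)$ then expresses $T_m P_\lambda$ as a combination of the $P_\mu$ with $\mu\vdash|\lambda|-m$, the coefficients being assembled from the current matrix elements above.

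With these formulas the relations become box-counting identities. Write $c_{\lambda\to\mu}$ for the coefficient of $P_\mu$ in $T_1 P_\lambda$, which by the single-box action is nonzero only for $\lambda\supset\mu$. Collecting the coefficient of $P_\mu$, the relation $T_1 v_{G,n}=v_{G,n-1}$ reads
\begin{align*}
 \sum_{\lambda\vdash n,\ \lambda\supset\mu}\gamma_\lambda\,c_{\lambda\to\mu}=\gamma_\mu,
\end{align*}
a weighted sum over the ways of adding one box to $\mu$, and the relations $T_m v_{G,n}=0$ for $m\ge2$ read as analogous vanishing sums. I would verify the single-box identity by substituting the product formula and reducing the ratio $\gamma_\lambda/\gamma_\mu$ to the Pieri coefficient $c_{\lambda\to\mu}$, the content factor $k/(1-k^2 q^{j(s_0)}t^{-i(s_0)})$ of the added box $s_0$ matching the Cartan contribution. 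The main obstacle is the family of higher relations $T_m v_{G,n}=0$ with $m\ge2$: the vertex operators $\Lambda_1$, $\Lambda_2$ individually connect $\lambda$ to many shapes, so a priori each $T_m P_\lambda$ is a complicated multi-box transition. The reason for passing through Ding-Iohara-Miki is precisely that $\eta(z)$ and $\xi(z)$ act by single boxes, so these apparent multi-box terms reorganize into nested single-box transitions; the higher vanishing then follows from the pole structure of the defining current relation, namely the $\delta$-function terms on the right-hand side of the $\Vir_{q,t}$ relation, combined with the same Pieri identity used for $m=1$. Carrying out this reorganization together with the attendant product manipulations is the technical heart of the argument; everything else is bookkeeping in the Macdonald basis.
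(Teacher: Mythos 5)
Your reduction of the Whittaker condition to graded recursions, the uniqueness argument via nondegeneracy at generic $(q,t)$, and the treatment of $T_1$ as a single-box Pieri identity all match the paper in substance (the paper passes through the equivalent condition $(u\eta_1+D_1)v_{n+1}(u)=v_n(u)$ and proves it with exactly such an added-box identity, Lemma \ref{lem:dq}). But the mechanism you propose for the higher relations $T_m v_{G,n}=0$, $m\ge 2$, rests on a claim that is not correct as stated: that $\eta(z)$ and $\xi(z)$ ``act by single boxes'' on the Macdonald basis, so that the multi-box transitions in $T_m$ reorganize into nested single-box ones. In the Fock realization relevant here, the mode $\eta_d$ lowers the symmetric-function degree by $d$, so for $d\ge 2$ its matrix elements connect $\lambda$ to partitions $\mu$ with $|\mu|=|\lambda|-d$; only $\eta_0$ (diagonal, the Macdonald operator) and $\eta_{\pm 1}$ (single box) have the simple description you invoke. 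The single-box statement in the Feigin--Tsymbaliuk/Schiffmann--Vasserot picture concerns the whole current with its delta-function matrix elements under a different identification of gradings, and it does not hand you the individual modes $\eta_d$, $d\ge2$, for free. Consequently ``the pole structure of the $\delta$-function terms in the defining relation combined with the same Pieri identity'' is not a proof of the vanishing; this is precisely where the real work lies.

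A second, related misattribution: the factor $\prod_{s}k/(1-k^2q^{j(s)}t^{-i(s)})=k^{|\lambda|}H[B_\lambda;k^2q/t]$ is not a Cartan eigenvalue of $\varphi^\pm(z)$. In the paper it arises by expanding in powers of $u=k^2q/t$, which converts this factor into the action of the operators $\Delta_{h_r}$ (eigenvalue $h_r[B_\lambda]$ on $J_\lambda$), and the hard condition becomes $\bigl((q/t)^{d-1}\eta_d\Delta_{h_{r-1}}+D_d\Delta_{h_r}\bigr)v_n^{(0)}=0$ as in Corollary \ref{cor:whit:2}. Handling this requires ingredients absent from your outline: the closed exponential form of the generating series $v^{(0)}(w)$ (which trivializes $D_dv_n^{(0)}=0$), the explicit bosonization $\widehat{E}_r$ of the higher Macdonald operators (Fact \ref{fct:E_r}) together with the change of alphabet from $e_r[s^\lambda]$ to $h_r[B_\lambda]$ (Lemma \ref{lem:B-s}), and a multi-variable residue computation locating the poles at the specialized points $w^\mu$. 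Without some substitute for this machinery, the case $m\ge2$, $r\ge1$ remains unproved in your proposal.
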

Here the symbol $k$ is the highest weight of Fock space,
and is related to the highest weight of the Verma module 
by $h=k+k^{-1}$.

In this paper we give a proof of this explicit formula of Whittaker vector 
of the deformed Virasoro algebra realized in the Fock space.
Our proof will be done in the representation theory of quantum algebra.
The result can be viewed as a $q$-analogue of the explicit formula 
for Whittaker vector of the Virasoro Lie algebra,
which was proved in \cite{Y}.
However, the method of the proof is totally different.
In the non-deformed Virasoro case, the existence of the Whittaker vector 
follows from the universal property of the Verma module of Lie algebra.
For the deformed Virasoro algebra, no general theory like \cite{K} 
is not available, since it is not a Lie algebra, nor a vertex (operator) algebra. 

In the proof we rewrite the Whittaker condition \eqref{eq:intro:whit}
in the language of the Fock representation of the so-called 
Ding-Iohara-Miki quantum algebra $\cal{U}(q,t)$.
It is a version of (topological) Hopf algebra introduced by Ding and Iohara \cite{DI:1997}.
The algebra has a Fock representation, 
and this representation is intimately connected to the theory 
of Macdonald symmetric functions \cite{M}.
For a simple example, two parameters $q$ and $t$ in the algebra $\cal{U}(q,t)$
can be identified with the parameters in the Macdonald symmetric functions.
This surprising connection was discovered by Miki \cite{Mi}.
A few years later, several groups \cite{FHHSY,FT,FFJMM,SV} ``rediscovered" 
the algebra $\cal{U}(q,t)$ in several contexts.
See \S\ref{ssec:DIM} for more accounts.

By Feigin and Tsymabliuk's work \cite{FT} and 
Schiffmann and Vasserot's work \cite{SV},
the Fock representation of the Ding-Iohara-Miki algebra
is also related to the torus equivariant $K$-group 
of the Hilbert schemes of points on the affine plane.
Thus it is natural that our formula for Whittaker vector 
is related to the geometry of the Hilbert schemes of points.
After the proof of the formula, 
we will mention to this connection to the geometry.

Let us explain the organization of the paper here.
In \S \ref{sec:statement} we introduce the deformed Virasoro algebra, 
its Fock representation and symmetric functions.
After defining the Whittaker vector for Virasoro algebra,
we state in Theorem \ref{thm:main}
the conjecture of Awata and Yamada, whose proof is the main result of this paper.

\S \ref{sec:prelim} is a preliminary part of the proof.
After introducing Ding-Iohara-Miki algebra $\cal{U}_{q.t}$,
we rewrite the Whittaker condition of the deformed Virasoro algebra 
in terms of currents of operators in $\cal{U}_{q.t}$ acting on the Fock space.

In \S \ref{sec:prf} we give the proof.
Since the Whittaker condition can be written in three parts,
the proof is also separated in three parts.
The third condition is hardest to prove.

In \S \ref{sec:geom} we will give an interpretation of the Whittaker vector 
in terms of the geometry of Hilbert schemes of points on the plane.
In \S\ref{ssec:Haiman} we recall Haiman's work \cite{H:2001,H:2002,H:2003} 
and introduce the main morphism $\Phi$ from geometry to representation.
In \S\ref{ssec:geom:whit} we interpret the formula in terms of geometry.

The appendix is attached for some accounts of the computation 
in terms of the deformed Virasoro currents.
In the main text the computation will be done 
in terms of the currents $\eta(z)$ and $D(z)$.
Although we don't have fully general formulae for $T_d v_{G,n}$,
we can study a few properties of $T_d v_{G,n}$,
which seem to be interesting from combinatorial viewpoints.

\subsection*{Notation}

We follow \cite{M} for the notations of partitions and symmetric functions.

By a partition we mean a non-decreasing finite sequence of positive integers.
For a partition $\lambda$, its total sum is denoted by $|\lambda|$ 
and its length is denoted by $\ell(\lambda)$.
$\lambda \vdash n$ means that $\lambda$ is a partition with the condition 
$|\lambda| = n$.

A partition will be identified with the associated Young diagram.
We use $(i,j)$ for the coordinate of a box in a Young diagram, 
so that for a partition $\lambda=(\lambda_1,\lambda_2,\ldots,\lambda_\ell)$ 
we have $1 \le i \le \ell$ and $1 \le j \le \lambda_i$.

The arm and leg of the box $s$ at $(i,j)$ in the Young diagram $\lambda$ 
is denoted by $a_\lambda(s)$ and $l_\lambda(s)$.
We have $a_\lambda(s) = \lambda_i -j$ 
and $l_\lambda(s)= \lambda^\vee_j - i$.
Here $\lambda^\vee$ is the transposed Young diagram of $\lambda$.

In Figure \ref{fig:1} we give an example of Young diagram and 
combinatorial symbols.
We will use French notation for Young diagram.
The figure is the diagram for the partition $(9,9,6,5,2,2)$.
For the box $s$ shadowed in the figure,
we have $i(s) = 2$, $j(s)=4$, $a(s)= 5$ and $l(s)=2$.

\begin{figure}[htbp]
{\unitlength 0.1in%
\begin{picture}( 18.0000, 10.0000)(  2.0000,-12.0000)%
\special{pn 8}%
\special{pa 200 200}%
\special{pa 200 1200}%
\special{fp}%
\special{pa 400 200}%
\special{pa 400 1200}%
\special{fp}%
\special{pa 600 200}%
\special{pa 600 1200}%
\special{fp}%
\special{pa 800 400}%
\special{pa 800 1200}%
\special{fp}%
\special{pa 1000 400}%
\special{pa 1000 1200}%
\special{fp}%
\special{pa 1200 400}%
\special{pa 1200 1200}%
\special{fp}%
\special{pa 1400 600}%
\special{pa 1400 1200}%
\special{fp}%
\special{pa 1600 800}%
\special{pa 1600 1200}%
\special{fp}%
\special{pa 1800 800}%
\special{pa 1800 1200}%
\special{fp}%
\special{pa 2000 800}%
\special{pa 2000 1200}%
\special{fp}%
\special{pa 200 200}%
\special{pa 600 200}%
\special{fp}%
\special{pa 200 400}%
\special{pa 1200 400}%
\special{fp}%
\special{pa 200 600}%
\special{pa 1400 600}%
\special{fp}%
\special{pa 200 800}%
\special{pa 2000 800}%
\special{fp}%
\special{pa 200 1000}%
\special{pa 2000 1000}%
\special{fp}%
\special{pa 200 1200}%
\special{pa 2000 1200}%
\special{fp}%
\special{pn 4}%
\special{pa 990 810}%
\special{pa 810 990}%
\special{fp}%
\special{pa 1000 860}%
\special{pa 860 1000}%
\special{fp}%
\special{pa 1000 920}%
\special{pa 920 1000}%
\special{fp}%
\special{pa 940 800}%
\special{pa 800 940}%
\special{fp}%
\special{pa 880 800}%
\special{pa 800 880}%
\special{fp}%
\end{picture}}%

\caption{Young diagram for $(9,9,6,5,2,2)$.}
\label{fig:1}
\end{figure}
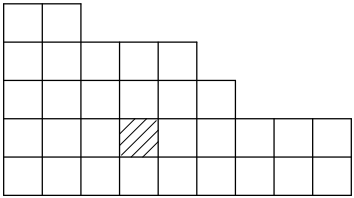

The space of symmetric functions with the coefficient ring $\bb{Z}$ 
will be denoted by $\Lambda$,
and we set $\Lambda_R := \Lambda \otimes_{\bb{Z}} R$ for a ring $R$.
The degree decomposition of $\Lambda$ will be denoted by 
$\Lambda = \oplus_{d=0}^{\infty} \Lambda_d$. 
The coefficient extension of $\Lambda_d$ is denoted by
$\Lambda_{R,d}$. 

For a positive integer $n$,
the $n$-th power-sum symmetric function will be denoted by $p_n$.
For a partition $\lambda=(\lambda_1,\ldots,\lambda_\ell)$, 
$p_\lambda := p_{\lambda_1} \cdots p_{\lambda_\ell}$
is the the product of power-sum symmetric functions.
Recall that $\{p_\lambda \mid \lambda \vdash d\}$ 
is a basis of $\Lambda_{\bb{Q},d}$.

For two indeterminates $q$ and $t$,
the Macdonald symmetric function will be denoted by $P_{\lambda}(q,t)$,
and the integral form will be denoted by $J_{\lambda}(q,t)$.
These are the elements of $\Lambda_{\bb{F}}$ with 
$\bb{F} := \bb{Q}(q,t)$.
More precisely,
$\{P_{\lambda}(q,t) \mid \lambda \vdash d \}$ 
is a basis of $\Lambda_{\bb{F},d}$.


Finally we introduce the $q$-Pochhammer symbols
$$
 (a;q)_\infty := \prod_{i=0}^{\infty}(1-a q^i), \qquad
 (a;q)_n := \dfrac{(a;q)_\infty}{(a q^n;q)_\infty}
$$

\subsection*{Acknowledgements}

The author is supported by the Grant-in-aid for 
Scientific Research (No.\ 25800014), JSPS.

This work is also partially supported by the 
JSPS Strategic Young Researcher 
Overseas Visits Program for Accelerating Brain Circulation
``Deepening and Evolution of Mathematics and Physics,
Building of International Network Hub based on OCAMI''.

Main part of this work was done while the author 
stayed at University of Toronto (Autumn 2013)
and National Research University  Higher School of Economics (Winter 2013-14).
The author would like to thank both institutes for support and hospitality.

Finally, part of the results was presented in a talk in the workshop ``Quiver varieties" at 
Simons Center for Geometry and Physics (mid-October 2013).
The author would like to thank the organizers of the workshop for the invitation.

\section{Awata-Yamada conjecture for Whittaker vector}
\label{sec:statement}

\subsection{Deformed Virasoro algebra}

Let us begin with the introduction of the deformed Virasoro algebra \cite{SKAO}.
Let $q$ and $t$ be indeterminates and set $\bb{F} := \bb{Q}(q,t)$.
Consider the associative algebra over $\bb{F}$
generated by $\{T_n \mid n \in \bb{Z}\}$ and $1$ 
with the relations
\begin{align}
\label{eq:qvir:rel}
 \sum_{l \ge 0} f_l (T_{m-l} T_{n+l} - T_{n-l} T_{m+l})
=-\dfrac{(1-q)(1-t^{-1})}{1-q/t}
  \bigl((q/t)^m - (q/t)^{-m}\bigr) \delta_{m+n,0}
\end{align}
for any $m,n \in \bb{Z}$.
Here the coefficients $f_l$'s are given by the generating function
\begin{align*}
  \sum_{l \ge 0} f_l z^l
= \exp\Bigl(
   \sum_{n\ge1} 
    \dfrac{1}{n} \dfrac{(1-q^n)(1-t^{-n})}{1+(q/t)^{n}} z^n 
  \Bigr).
\end{align*}
This associative algebra will be denoted by $\Vir_{q,t}$.

The name ``deformed Virasoro'' comes from a degenerate limit of this algebra.
Let us specialize $q,t$ to complex numbers 
(so that the algebra is defined over $\bb{C}$)
and set $q = e^\hbar$, $t=q^\beta$. 
Take the limit $\hbar \to 0$ with fixed $\beta$.
Then the current $T(z) := \sum_{n \in \bb{Z}}  T_n z^{-n}$ 
can be expanded as 
$$
 T(z) = 2 + 
  \beta \left(z^2 L(z) +\dfrac{(1-\beta)^2}{4\beta}\right) \hbar^2 +
  O(\hbar^4),
$$
where $L(z) = \sum_{n \in \bb{Z}} L_n z^{-n-2}$
satisfies the defining relation of the Virasoro Lie algebra
$$
 [L_m,L_n] = (m-n)L_{m+n} + \dfrac{c (m^3-m)}{12}\delta_{m+n,0}
$$
with 
$c=1-6(1-\beta)^2/ \beta$.

Let $h$ be an element of $\bb{F}$.
The Verma module $M_h$ of $\Vir_{q,t}$ 
is generated by the highest weight vector 
$\vac{h}$ with the properties 
$$
 T_0\vac{h}=h\vac{h}, \qquad 
 T_n\vac{h}=0 \ (n \ge 1).
$$

By the defining relation \eqref{eq:qvir:rel},
$\Vir_{q,t}$ has a natural decomposition.
Introduce the outer grading operator $d$ 
which satisfies $[d,T_n]=n T_n$ and $d \vac{h}= 0$.
Let us call an element $v \in M_h$ of grade $n$ if $d v = -n v$.
Then one has the decomposition 
$M_h = \oplus_{n \ge 0} M_{h,n}$ 
by the grade $n$ subspace $M_{h,n}$. 
Let us set $\widehat{M}_h := \prod_{n\ge0}M_{h,n}$, 
the completion of $M_h$ with respect to this grading decomposition.
So an element of $\widehat{M}_h$ can be written 
as an infinite sum $\sum_{n \ge 0} v_n$ with $v_n \in M_{h,n}$.

\subsection{Whittaker vector}

Let $L$ be an element of $\bb{F}$.
An element $v_G$ of the completed Verma module $\widehat{M}_h$ 
with the properties
\begin{align}
\label{eq:Whittaker}
 T_1 v_G = L v_G, \qquad 
 T_n v_G = 0 \quad (n \ge 2)
\end{align}
will be called a Whittaker vector of $\Vir_{q,t}$.
This notion was introduced in \cite{AY},
where $v_G$ was called ``the (deformed) Gaiotto state''.
$v_G$ is an analogue of the (degenerate) Whittaker vector $w$ 
for Virasoro Lie algebra, 
and $w$ was studied in \cite{G} from the viewpoint of AGT conjecture.

\subsection{Bosonization}

Let us recall the bosonization of $\Vir_{q,t}$ following \cite{SKAO}.
Consider the Heisenberg Lie algebra $\cal{H}$ over $\bb{F}$
generated by $\{a_n \mid n \in \bb{Z}\}$ and $1$ 
with the commutation relations
$$
 [a_m,a_n] = m \dfrac{1-q^{|m|}}{1-t^{|m|}}\delta_{m+n,0}.
$$
Then the $\Vir_{q,t}$ current 
$T(z) = \sum_{n \in \bb{Z}} T_n z^{-n}$ 
is bosonized as 
\begin{align}
\label{eq:qvir:bosonization}
& T(z) = \Lambda^{+}\bigl( (q/t)^{-1/2}z \bigr) \cdot (q/t)^{1/2}t^{a_0}
       + \Lambda^{-}\bigl( (q/t)^{ 1/2}z \bigr) \cdot (q/t)^{-1/2}t^{-a_0},
\\
\nonumber
& \Lambda^{\pm}(z) 
  := \exp\Bigl(
         \pm \sum_{n=1}^{\infty}
            \dfrac{1-t^{-n}}{1+(q/t)^n} \dfrac{a_{-n}}{n} z^{n}
         \Bigr) 
     \exp\Bigl(
         \mp \sum_{n=1}^{\infty}
            (1-t^n) \dfrac{a_{n}}{n} z^{-n}
         \Bigr).
\end{align}

The Verma module $M_h$ is also realized 
in the Heisenberg Fock space $\cal{F}$.
The highest weight vector of $\cal{F}$ is denoted by $1_{\cal{F}}$, 
which satisfies the properties
$$
 a_0 1_\cal{F} = \alpha 1_\cal{F}, \qquad
 a_n 1_\cal{F} = 0 \ (n \ge 1). 
$$
Here we set the highest weight of $1_{\cal{F}}$ to be $\alpha$.

Since we take $q$ and $t$ to be indeterminates, 
the map 
\begin{align}
\label{eq:Fock}
 M_h \longrightarrow \cal{F}
\end{align}
induced by the bosonization \eqref{eq:qvir:bosonization} 
and the correspondence 
$\vac{h} \mapsto 1_\cal{F}$
with 
$$
 h= (q/t)^{1/2}t^\alpha +  (q/t)^{-1/2}t^{-\alpha}
$$
gives an isomorphism of $\Vir_{q,t}$-modules.
If we specialize $q$ and $t$ to generic complex numbers,
the above map is still an isomorphism.
The non-generic values are 
given by the zeros of the ($\Vir_{q,t}$ version of) Kac determinant,
which was conjectures in \cite{SKAO} and proved in \cite{BP}.

When expressing elements of $M_h$ by those of $\cal{F}$,
it is convenient and powerful to use the symmetric function.
An element of $\cal{F}$ can be presented as a symmetric function 
via the correspondences
$$
 a_{-n} \longmapsto p_n,\qquad 
 a_n \longmapsto n\dfrac{1-q^n}{1-t^n}\partial_{p_n}
$$
for $n>0$.
Thus an element 
$a_{-\lambda_1} \cdots a_{-\lambda_\ell} \cdot 1_{\cal{F}}$
of the PBW-basis of $\cal{F}$
can be identified with the  power-sum symmetric function
$p_\lambda = p_{\lambda_1} \cdots p_{\lambda_\ell}$.
By this correspondence, we have an isomorphism 
\begin{align*}
 \cal{F} \xrightarrow{\ \sim \ } \Lambda_{\bb{F}}
\end{align*}
of $\bb{F}$-vector spaces.
Composed with \eqref{eq:Fock}, it gives an isomorphism
$$
 M_h \xrightarrow{\ \sim \ } \Lambda_{\bb{F}}
$$
of $\Vir_{q,t}$-modules.
The action of $T(z)$ on $\Lambda_{\bb{F}}$
can be written as 
\begin{equation}
\label{eq:Tz:F}
\begin{split}
T(z) = 
&\exp\Bigl(
       \sum_{n=1}^{\infty}
        \dfrac{1-t^{-n}}{1+(q/t)^n} \dfrac{p_n}{n} 
        (q/t)^{-n/2} z^{n}
      \Bigr)
  \times
  \exp\Bigl(
      - \sum_{n=1}^{\infty}
         (1-q^n) \partial_{p_n} (q/t)^{n/2} z^{-n}
       \Bigr)
  \times k
\\
+&
  \exp\Bigl(
       -\sum_{n=1}^{\infty}
        \dfrac{1-t^{-n}}{1+(q/t)^n} \dfrac{p_n}{n} 
        (q/t)^{n/2} z^{n}
      \Bigr)
  \times
  \exp\Bigl(
      \sum_{n=1}^{\infty}
       (1-q^n) \partial_{p_n} (q/t)^{-n/2} z^{-n}
      \Bigr)
     \times k^{-1}
\end{split}
\end{equation}
with 
$$k:= (q/t)^{1/2}t^{\alpha}.$$
Thus we have $H = k + k^{-1}$.
We shall use this symbol $k$ for indicating the highest weight, instead of using $\alpha$ or $h$.

\subsection{Awata-Yamada conjecture}
Let us recall the conjectural formula \cite[(3.18)]{AY} 
of Whittaker vector of the deformed Virasoro algebra. 

\begin{thm}\label{thm:main}
The Whittaker vector $v_G$ of the deformed Virasoro algebra exists uniquely,
and in the bosonized form it is given by 
$v_G = \sum_{n\ge0} L^n v_{G,n}$  with 
\begin{align}
\label{eq:AY}
v_{G,n} =
  \sum_{\lambda \, \vdash n} P_{\lambda}(q,t) \gamma_\lambda,
\quad
  \gamma_\lambda =  
  \prod_{s \in \lambda}
  \dfrac{k}{1 - k^2 q^{j(s)} t^{-i(s)}}
  \dfrac{q^{a_\lambda(s)}}{1 - q^{1 + a_\lambda(s)} t^{l_\lambda(s)}}
\end{align}
\end{thm}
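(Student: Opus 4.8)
The plan is to verify directly that the series \eqref{eq:AY} solves the Whittaker condition \eqref{eq:Whittaker}, and then to argue uniqueness separately. Since $T_d$ lowers the grade by $d$ (because $[d,T_d]=d\,T_d$), substituting $v_G=\sum_{n\ge0}L^n v_{G,n}$ into \eqref{eq:Whittaker} and matching graded components reduces the Whittaker condition to the recursion $T_1 v_{G,n}=v_{G,n-1}$ together with the vanishing relations $T_d v_{G,n}=0$ for all $d\ge2$, normalized by $v_{G,0}=1$. Thus everything comes down to computing $T_d$ applied to a linear combination $\sum_{\lambda\vdash n}P_\lambda(q,t)\gamma_\lambda$ of Macdonald symmetric functions.

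To make these computations tractable I would not work with $T(z)$ directly but rewrite the two vertex-operator terms of \eqref{eq:Tz:F} through the Fock-space currents of the Ding--Iohara--Miki algebra $\cal{U}(q,t)$. The point is that these currents have an explicit, combinatorial action on the Macdonald basis: the box-adding current sends $P_\lambda$ to a sum over partitions $\mu$ with $\mu\supset\lambda$ and $|\mu|=|\lambda|+1$ with Pieri-type coefficients, the box-removing current sends $P_\lambda$ to the analogous sum over $\mu\subset\lambda$, and the diagonal currents act on $P_\lambda$ with known eigenvalues expressed through the contents $q^{j(s)}t^{-i(s)}$. Substituting the conjectured $v_{G,n}$ then turns $T(z)v_G$ into an explicit generating series over Young diagrams whose coefficients are products of Pieri coefficients and ratios $\gamma_\lambda/\gamma_\mu$. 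Here it is useful to recognize that the second factor of $\gamma_\lambda$ is, up to the monomial $\prod_s q^{a_\lambda(s)}$, the reciprocal of the Macdonald normalization $c'_\lambda(q,t)=\prod_s(1-q^{1+a_\lambda(s)}t^{l_\lambda(s)})$ relating $P_\lambda$ to the integral form $J_\lambda(q,t)$, while the first factor carries all the dependence on the highest weight $k$.

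With $T(z)v_G$ in this form, the proof splits into the three parts indicated by the structure of the condition. The coefficients of the non-negative powers of $z$ (namely $T_0 v_G$ and the $T_{-m}v_G$) are unconstrained; the coefficient of $z^{-1}$ must reproduce $L v_G$, i.e.\ the recursion; and the coefficients of $z^{-d}$ for $d\ge2$ must all vanish. The first two are comparatively direct: once the Pieri coefficients of the box-removing part are matched against the ratio $\gamma_\lambda/\gamma_\mu$ for $\mu$ obtained from $\lambda$ by removing a box, the recursion reduces to a box-by-box identity. The third part is the heart of the matter, and I expect it to be the main obstacle. The required cancellation is not term-by-term: for a fixed intermediate diagram one must sum contributions coming from several different ways of adding and removing boxes, and show that these telescope to zero. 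I would handle this by fixing the target partition, collecting all Pieri coefficients weighted by the appropriate $\gamma$-ratios, and reducing the claim to a single rational-function identity in $q,t,k$, which can then be verified by clearing denominators and comparing poles and residues in the content variables $q^{j(s)}t^{-i(s)}$.

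Finally, uniqueness follows from the same recursion: since $v_{G,0}$ is fixed by normalization and the conditions $T_1 v_{G,n}=v_{G,n-1}$ together with $T_d v_{G,n}=0$ $(d\ge2)$ admit at most one solution in each graded piece $M_{h,n}$, the vector $v_G$ is determined uniquely, and the explicit formula \eqref{eq:AY} exhibits it.
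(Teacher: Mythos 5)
Your overall architecture matches the paper's: reduce \eqref{eq:Whittaker} to graded components, pass from $T(z)$ to the Fock-space currents of the Ding--Iohara--Miki algebra, absorb the second factor of $\gamma_\lambda$ into the integral form $J_\lambda(q,t)$ via $c'_\lambda(q,t)$, and split the verification into the recursion $T_1 v_{G,n}=v_{G,n-1}$ plus the vanishing conditions. The genuine gap is in your plan for the part you yourself identify as the heart of the matter. You propose to compute $T_d v_{G,n}$ for $d\ge 2$ by collecting Pieri coefficients of box-adding and box-removing currents and reducing to a single rational-function identity in $q,t,k$ checked by poles and residues. But the modes of these currents that lower the degree by $d\ge 2$ have no factored Pieri-type matrix elements on the Macdonald basis: only $\eta_0$ (diagonal, \eqref{eq:eta0:act}), $\eta_1$ (one box, \eqref{eq:eta1}) and $D_1=(1-q)\partial_{p_1}$ admit such formulas. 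For $d\ge 2$ the matrix elements are unstructured sums over multi-box configurations, the cancellation mixes contributions from infinitely many $r$ in the expansion of $\prod_s(1-k^2q^{j(s)}t^{-i(s)})^{-1}$, and the hoped-for single factored identity does not materialize.

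The missing idea is the structural step that makes the computation tractable: the $k$-dependent factor of $\gamma_\lambda$ is the generating function $k^n\sum_r(k^2q/t)^r h_r[B_\lambda]$ of eigenvalues of the Macdonald operators $\Delta_{h_r}$, so that $v_{G,n}=k^n\sum_r(k^2q/t)^r\Delta_{h_r}v_n^{(0)}$, where $v^{(0)}(w)=\sum_n(-w)^nv_n^{(0)}=\exp\bigl(-\sum_{m>0}\tfrac{1}{1-q^m}\tfrac{p_m}{m}w^m\bigr)$ has product form by a principal specialization of the Cauchy kernel. This turns the $d\ge 2$ condition into \eqref{eq:main:3}, i.e.\ into commuting the whole currents $D(z)$ and $\eta_+(z)$ past an explicit exponential and past the bosonized higher-order Macdonald operators $\widehat{E}_r$ of \eqref{eq:E_r} (needed because $\Delta_{h_r}$ itself has no direct bosonization), and the cancellation is then established by residue calculus on generating functions (Propositions \ref{prop:DElamv0} and \ref{prop:etaElamv0}), not by term-by-term Pieri bookkeeping. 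Without a substitute for this step your sketch stalls exactly at the obstacle you flag. A minor additional point: your uniqueness claim needs the injectivity of $v\mapsto(T_{d_1}\cdots T_{d_k}v)$ on $M_{h,n}$, i.e.\ the nonvanishing of the Kac determinant for indeterminate $q,t$, which deserves an explicit sentence.
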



\section{Preliminary}
\label{sec:prelim}

The purpose of this subsection is a rewriting of the Whittaker condition
in terms of the key objects $\eta(z)$ and $D(z)$.
These are Laurent formal series of operators acting on the Fock space,
and part of the generating currents of the Ding-Iohara-Miki algebra.
$\eta(z)$ has another feature related to Macdonald symmetric functions:
The zero-th mode $\eta_0$ realizes the first-order 
Macdonald difference operator.
The importance of this current have been observed 
in the past studies \cite{SKAO,S,FHHSY,FFJMM}.

\subsection{Rewriting in the integral form}

First we rewrite the Whittaker vector \eqref{eq:AY}
in terms of the integral form of Macdonald symmetric functions.
For that purpose, we shall prepare several combinatorial notations.

For a partition $\lambda=(\lambda_1,\lambda_2,\ldots)$ 
we use the common symbol
$$ 
 n(\lambda) := \sum_i (i-1)\lambda_i.
$$
One can immediately see that 
\begin{align}
 \label{eq:par:n}
 n(\lambda) = \sum_{s \in \lambda} (i(s)-1),\quad 
 n(\lambda') = \sum_{s \in \lambda} (j(s)-1).
\end{align}
Here the running index $s \in \lambda$ means that 
$s$ runs over the set of boxes in the Young diagram (associated to) $\lambda$.
The symbols $i(s)$ and $j(s)$ are the coordinates of the box $s$.
As explained in NOTATION, we have $1 \le i(s) \le \ell(\lambda)$ and 
$1 \le j(s) \le \lambda_{i(s)}$ for any $s \in \lambda$.

Then the relationship  between 
$P_\lambda(q,t)$ and $J_\lambda(q,t)$ is 
\begin{align}
\label{eq:JPQ}
 J_\lambda(q,t) = c_\lambda(q,t) P_\lambda(q,t) 
 = c'_\lambda(q,t) Q_\lambda(q,t). 
\end{align}
(See \cite[Chap VI., \S 8]{M} for the detail.)
Here the symmetric function $Q_\lambda(q,t)$ is the dual of 
$P_\lambda(q,t)$ with respect to the Macdonald inner product
$\langle \cdot,\cdot\rangle_{q,t}$ on $\Lambda_{\bb{F}}$ defined by 
\begin{align}\label{eq:ipqt}
 \langle p_\lambda,p_\mu \rangle_{q,t} := 
  \delta_{\lambda,\mu} z_\lambda 
  \prod_{i=1}^{\ell{\lambda}}\dfrac{1-q^{\lambda_i}}{1-t^{\lambda_i}}.
\end{align}
The coefficients $c_{\lambda}(q,t)$ and $c'_{\lambda}(q,t)$ are 
given by the product formula
\begin{align}
\label{eq:cc's}
&c_{\lambda}(s;q,t) := 1-q^{a_{\lambda}(s)  } t^{l_{\lambda}(s)+1},
 \qquad
 c'_{\lambda}(s;q,t) := 1-q^{a_{\lambda}(s)+1 } t^{l_{\lambda}(s)},
 \qquad
\\
\label{eq:cc'}
&c_{\lambda}(q,t) := \prod_{s \in \lambda} c_{\lambda}(s;q,t), 
 \qquad
 c'_{\lambda}(q,t) := \prod_{s \in \lambda} c'_{\lambda}(s;q,t).
\end{align}
We also have
\begin{align*}
 \langle J_\lambda(q,t), J_\mu(q,t) \rangle_{q,t} 
 = c_{\lambda}(q,t) c'_{\lambda}(q,t) \delta_{\lambda,\mu}
\end{align*}

We also use the plethystic notation for symmetric functions.
For an alphabet $X=(x_1,x_2,\ldots)$, 
a symmetric function with variables $X$ 
is denoted by $f[X]$.
So, for example, the power symmetric function in the variables $X$ is denoted by 
$p_n[X] := x_1^n + x_2^n + \cdots$.

We shall denote by 
\begin{align*}
H[X;u] := \prod_{i}(1-u X_i)^{-1} = \sum_{r\ge0} u^r h_r[X] 
\end{align*}
the generating function of 
the complete symmetric polynomials $h_r$ in the alphabet $X$.

Following the ideas in \cite{BGHT}, 
for each partition $\lambda$ we introduce the set 
\begin{align}
\label{eq:B}
 B_\lambda 
 := \bigl\{q^{j-1}t^{1-i} \mid 
           1\le i \le \ell(\lambda), \, 1\le j \le \lambda_i \bigr\}
  = \bigl\{q^{j(s)-1}t^{1-i(s)} \mid s\in \lambda \bigr\}.
\end{align}
We shall use $B_\lambda$ as an alphabet of symmetric polynomials.
For example, we have
\begin{align*}
 e_1[B_\lambda] = \sum_{s \in \lambda} q^{j(s)-1}t^{1-i(s)}.
\end{align*}

Now using \eqref{eq:par:n}, \eqref{eq:JPQ} and \eqref{eq:B}, 
we can rewrite the conjectural Whittaker vector formula \eqref{eq:AY} as follows:

\begin{lem}\label{lem:u}
\eqref{eq:AY} is equivalent to 
$v_{G,n} = k^n  \sum_{r=0}^\infty (k^2 q/t)^r v_n^{(r)}$ with 
\begin{align}\label{eq:AY:J}
& v_n^{(r)} := \sum_{\lambda \,\vdash n} 
 J_{\lambda}(q,t) 
  \dfrac{q^{n(\lambda')}}{\langle J_\lambda(q,t),J_\lambda(q,t) \rangle_{q,t}}
 h_r[B_\lambda].
\end{align}
\end{lem}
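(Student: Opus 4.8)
The plan is to show that the two formulas for $v_{G,n}$ — the one in \eqref{eq:AY} written in terms of $P_\lambda(q,t)$ and $\gamma_\lambda$, and the one in \eqref{eq:AY:J} written in terms of $J_\lambda(q,t)$, $q^{n(\lambda')}$, and $h_r[B_\lambda]$ — agree term by term in $\lambda$ after summing the geometric-type series in $r$. Since $\{P_\lambda(q,t) \mid \lambda \vdash n\}$ is a basis of $\Lambda_{\bb{F},n}$, it suffices to match the coefficient of each $P_\lambda(q,t)$ on both sides. On the left this coefficient is simply $\gamma_\lambda$. On the right, I would first replace $J_\lambda(q,t)$ by $c_\lambda(q,t) P_\lambda(q,t)$ using \eqref{eq:JPQ}, and replace $\langle J_\lambda,J_\lambda\rangle_{q,t}$ by $c_\lambda(q,t)c'_\lambda(q,t)$ using the norm formula quoted just above the statement. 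The factors of $c_\lambda(q,t)$ cancel, leaving the coefficient of $P_\lambda$ in $v_n^{(r)}$ equal to $q^{n(\lambda')}\,h_r[B_\lambda]\,/\,c'_\lambda(q,t)$.

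The next step is to reassemble the full coefficient of $P_\lambda(q,t)$ in $k^n\sum_{r\ge0}(k^2 q/t)^r v_n^{(r)}$, which is
\begin{align*}
 k^n \, \dfrac{q^{n(\lambda')}}{c'_\lambda(q,t)} \sum_{r\ge0} (k^2 q/t)^r h_r[B_\lambda].
\end{align*}
Here I would invoke the generating-function identity $\sum_{r\ge0} u^r h_r[B_\lambda] = H[B_\lambda;u] = \prod_{s\in\lambda}(1 - u\,q^{j(s)-1}t^{1-i(s)})^{-1}$, specialized at $u = k^2 q/t$, using the explicit description of the alphabet $B_\lambda$ in \eqref{eq:B}. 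Each factor becomes $\bigl(1 - k^2 q^{j(s)} t^{-i(s)}\bigr)^{-1}$, so the sum over $r$ produces exactly $\prod_{s\in\lambda}\bigl(1 - k^2 q^{j(s)} t^{-i(s)}\bigr)^{-1}$, which is the first product appearing in $\gamma_\lambda$ once the overall $k^n = \prod_{s\in\lambda} k$ is distributed box-by-box (note $n = |\lambda|$ equals the number of boxes).

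What remains is to verify the bookkeeping on the $q$-powers and the second factor of $\gamma_\lambda$. Using \eqref{eq:par:n} I would write $q^{n(\lambda')} = \prod_{s\in\lambda} q^{j(s)-1}$, and using \eqref{eq:cc's}–\eqref{eq:cc'} I would write $c'_\lambda(q,t) = \prod_{s\in\lambda}\bigl(1 - q^{a_\lambda(s)+1} t^{l_\lambda(s)}\bigr)$. The target second factor of $\gamma_\lambda$ is $\prod_{s\in\lambda} q^{a_\lambda(s)}\bigl(1 - q^{1+a_\lambda(s)} t^{l_\lambda(s)}\bigr)^{-1}$, whose denominator matches $c'_\lambda(q,t)$ exactly. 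So the only genuine check is the identity of $q$-exponents in the numerator, namely $q^{n(\lambda')} = \prod_{s\in\lambda} q^{j(s)-1}$ versus $\prod_{s\in\lambda} q^{a_\lambda(s)}$. I expect this to be the one point requiring care rather than pure symbol-matching: a priori $\sum_{s\in\lambda}(j(s)-1)$ and $\sum_{s\in\lambda} a_\lambda(s)$ need not agree box-by-box, but their total sums over $\lambda$ do coincide (both equal $n(\lambda')$, since summing $a_\lambda(s)+1 = \lambda_{i(s)}-j(s)+1$ along each row reindexes to the same partial sums as $j(s)-1$). Thus the main obstacle is to confirm that only the \emph{total} $q$-exponent enters each $\gamma_\lambda$ — which it does, because $\gamma_\lambda$ is a single product over boxes and the rearrangement is internal to one fixed $\lambda$ — after which \eqref{eq:AY} and \eqref{eq:AY:J} are seen to be literally the same expression for $v_{G,n}$.
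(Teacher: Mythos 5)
Your proof is correct and is essentially the computation the paper intends (the paper gives no written proof, only the remark that the lemma follows from \eqref{eq:par:n}, \eqref{eq:JPQ} and \eqref{eq:B}): matching coefficients of $P_\lambda$, cancelling $c_\lambda$ against the norm $c_\lambda c'_\lambda$, summing the geometric series $\sum_r u^r h_r[B_\lambda]=\prod_{s\in\lambda}(1-uB_s)^{-1}$ at $u=k^2q/t$, and using the row-by-row reindexing $\sum_{s\in\lambda}a_\lambda(s)=\sum_{s\in\lambda}(j(s)-1)=n(\lambda')$. You also correctly isolate the one nontrivial point, namely that $q^{a_\lambda(s)}$ and $q^{j(s)-1}$ agree only after taking the product over all boxes of a fixed $\lambda$, which is all that is needed since $\gamma_\lambda$ is a single product.
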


\subsection{Rewriting via the currents $\eta(Z)$ and $D(z)$}

In this subsection we rewrite the statement in terms of the currents 
$\eta(Z)$ and $D(z)$, which will play important roles in our computation.
Let us introduce the following Laurent series valued in operators acting 
on the Heisenberg Fock space $\cal{F}$.
\begin{align*}
\eta(z) := 
&\exp\Bigl( \sum_{n>0} \dfrac{1-t^{-n}}{n}a_{-n} z^{n} \Bigr)
 \exp\Bigl(-\sum_{n>0} \dfrac{1-t^{n} }{n}a_n    z^{-n}\Bigr)
\\
=
&\exp\Bigl( \sum_{n>0} \dfrac{1-t^{-n}}{n}p_n  z^{n} \Bigr)
 \exp\Bigl(-\sum_{n>0} (1-q^{n})\partial_{p_n} z^{-n}\Bigr),
\\
D(z) :=
&\exp\Bigl(\sum_{n>0} \dfrac{1-t^{n} }{n}a_n  z^{-n}\Bigr)
=\exp\Bigl(\sum_{n>0} (1-q^{n})\partial_{p_n} z^{-n}\Bigr).
\end{align*}
We denote their Fourier expansions as 
$\eta(z) = \sum_{d\in \bb{Z}}\eta_d z^{-d}$ and
$D(z)    = \sum_{d \ge 0}D_d z^{-d}$.
Finally, set 
\begin{align}\label{eq:vnu}
 v_n(u) := \sum_{r=0}^\infty u^r v_n^{(r)},
\end{align}
so that we have 
$v_{G,n}= k^n v_n(k^2 q/t)$.

\begin{prop}\label{prop:whit}
The Whittaker condition \eqref{eq:Whittaker} is equivalent to the condition
\begin{align}
\label{eq:Whittaker:2}
\bigl((q/t)^{d-1} u \eta_d + D_d\bigr)v_n(u) =
\begin{cases} v_{n}(u) & (d=1) \\ 0 & (d \ge2)\end{cases}
\end{align}
for each $n \in \bb{Z}_{\ge0}$ and for any indeterminate $u$.
\end{prop}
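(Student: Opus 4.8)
The plan is to substitute the bosonized current \eqref{eq:Tz:F} into the Whittaker condition \eqref{eq:Whittaker} and read off what it forces on the vectors $v_n(u)$. First I would expand \eqref{eq:Whittaker} in the parameter $L$: since $v_G=\sum_{n\ge0}L^nv_{G,n}$ and each mode $T_d$ with $d\ge1$ lowers the grade by $d$, the two conditions $T_1v_G=Lv_G$ and $T_dv_G=0$ $(d\ge2)$ are equivalent to the family $T_1v_{G,n}=v_{G,n-1}$ and $T_dv_{G,n}=0$ $(d\ge2)$, one relation for each $n\ge0$. Inserting $v_{G,n}=k^nv_n(u)$ with $u=k^2q/t$ and cancelling the common power $k^{n-1}$ turns these into relations for the $v_n(u)$ in which $k$ enters only through $k^2=ut/q$. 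This is precisely why the statement is phrased for an arbitrary indeterminate $u$: after the substitution the identities are polynomial in $u$, so it is enough to establish them with $u$ treated as free.

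Next I would use that \eqref{eq:Tz:F} writes the current as $T(z)=k\,\Lambda^+(z)+k^{-1}\Lambda^-(z)$ for two normal-ordered operators on $\Lambda_{\bb F}$, whence $T_d=k\,\Lambda^+_d+k^{-1}\Lambda^-_d$ and the relations above read, for each $d\ge1$,
\[
 k^2\Lambda^+_d\,v_n(u)+\Lambda^-_d\,v_n(u)=
 \begin{cases} v_{n-1}(u) & (d=1),\\ 0 & (d\ge2).\end{cases}
\]
The bridge to $\eta$ and $D$ is that the annihilation exponential of $\Lambda^+$ is exactly that of $\eta\bigl((t/q)^{1/2}z\bigr)$ and the annihilation exponential of $\Lambda^-$ is exactly $D\bigl((q/t)^{1/2}z\bigr)$; hence one may factor $\Lambda^+(z)=C^+(z)\,\eta\bigl((t/q)^{1/2}z\bigr)$ and $\Lambda^-(z)=C^-(z)\,D\bigl((q/t)^{1/2}z\bigr)$ with pure creation dressings $C^{\pm}(z)$ that absorb the deformed-Virasoro factor $1/(1+(q/t)^n)$ absent from $\eta$ and $D$. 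In this language the target \eqref{eq:Whittaker:2} is the statement that the current $k^2\Lambda^+(z)+\Lambda^-(z)$, in its modes $z^{-d}$ with $d\ge1$ and applied to $v_n(u)$, agrees with $k^2\eta\bigl((t/q)z\bigr)+D(z)$, whose $z^{-d}$ coefficient is exactly the operator $(q/t)^{d-1}u\,\eta_d+D_d$ of \eqref{eq:Whittaker:2}.

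The heart of the proof—and the step I expect to be the main obstacle—is this last identification, because it is not an equality of modes on the nose: the annihilation parts alone produce $\eta$ and $D$ only at the half-shifted arguments $(t/q)^{1/2}z$ and $(q/t)^{1/2}z$, so the creation dressings $C^{\pm}(z)$ must supply precisely the remaining rescaling to $(t/q)z$ and $z$ together with the factor $1/(1+(q/t)^n)$. I would carry this out by expanding the creation dressings $C^{\pm}(z)$ in their modes and reducing the resulting contractions, through the Heisenberg relations, back to the rescaled currents $\eta$ and $D$, while tracking the $(q/t)^{\pm n/2}$ powers; the freedom in $u$ reduces the whole comparison to identities in the single variable $u$. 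The case $d\ge2$ (right-hand side $0$) should come out uniformly, whereas the inhomogeneous case $d=1$, whose right-hand side is the shifted vector $v_{n-1}(u)$ produced by $T_1v_{G,n}=v_{G,n-1}$, must be recorded separately. Since every manipulation above is reversible, this establishes the asserted equivalence between \eqref{eq:Whittaker} and \eqref{eq:Whittaker:2}.
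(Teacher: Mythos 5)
Your strategy coincides with the paper's in its two main ingredients: the elimination of $L$ (reducing \eqref{eq:Whittaker} to $T_1 v_{G,n}=v_{G,n-1}$ and $T_d v_{G,n}=0$ for $d\ge2$), and the observation that each summand of the bosonized current \eqref{eq:Tz:F} is a pure-creation series times a rescaled $\eta$ or $D$. You should however make explicit that the two dressings coincide: $C^+(z)=C^-(z)=\psi(z)^{-1}$ with $\psi(z)=\exp\bigl(\sum_{n\ge1}\tfrac{1-t^{-n}}{1+(q/t)^n}\tfrac{p_n}{n}(q/t)^{n/2}z^{n}\bigr)$; this is exactly the paper's identity $\psi(z)T(z)=k\,\eta\bigl((q/t)^{-1/2}z\bigr)+k^{-1}D\bigl((q/t)^{1/2}z\bigr)$, and without the equality $C^+=C^-$ you could not strip a single common factor off the sum $k^2\Lambda^+(z)+\Lambda^-(z)$.

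The gap is in how you propose to finish. There are no ``contractions'' to reduce: $C^\pm(z)$ is pure creation and already stands to the left of the normally ordered $\eta$ and $D$, so the factorization is exact as written, and conversely no Heisenberg manipulation will turn $k^2\Lambda^+_d+\Lambda^-_d$ into $(q/t)^{d-1}u\,\eta_d+D_d$ --- these are genuinely different operators, and the claimed mode-by-mode agreement on $v_n(u)$ is equivalent to the statement being proved, not a computation. What the factorization actually yields is the triangular relation $\psi_0T_d+\psi_{-1}T_{d+1}+\psi_{-2}T_{d+2}+\cdots = k(q/t)^{d/2}\eta_d+k^{-1}(q/t)^{-d/2}D_d$ with $\psi_0=1$, and the equivalence of the two systems of conditions must be extracted from it by a downward induction on $d$: on the graded component $v_{G,n}$ only the modes with $d\le n$ act nontrivially, so one starts at $d=n$, where the two conditions agree on the nose, and descends using the unipotence of the system. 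Your closing ``since every manipulation above is reversible'' is exactly the assertion that this unipotent triangular system can be inverted; it is true, but it is the content of the step rather than a remark, and your plan as written does not supply it. (Minor point: the $d=1$ case of \eqref{eq:Whittaker:2} should read $v_{n-1}(u)$ on the right, as in Corollary \ref{cor:whit}; you corrected this silently.)
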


\begin{proof}
First we may eliminate the variable $L$ of the vector $v_{G}$ since
\begin{align*}
 T_d v_{G}=0 \Longleftrightarrow  T_d v_{G,n}=0 \ (\forall n \in \bb{Z}_{\ge0})
\end{align*}
for each  $d\in \bb{Z}_{\ge2}$ and
\begin{align*}
 T_1 v_{G}=L v_{G} \Longleftrightarrow  T_1 v_{G,n+1}=v_{G,n} \ (\forall n \in \bb{Z}_{\ge0}).
\end{align*}

An easy calculation says the bosonized current $T(z)$ \eqref{eq:Tz:F} satisfies
\begin{align}\label{eq:pT=ed}
\psi(z) T(z) = \eta\bigl((q/t)^{-1/2} z\bigr) k + D\bigl((q/t)^{1/2} z\bigr) k^{-1}
\end{align}
with
\begin{align*}
\psi(z) :=
  \exp\Bigl(
       \sum_{n=1}^{\infty}
        \dfrac{1-t^{-n}}{1+(q/t)^n} \dfrac{p_n}{n} 
        (q/t)^{n/2} z^{n}
      \Bigr).
\end{align*}
Notice that $\psi(z)$ has only the negative parts in its Fourier expansion: 
$\psi(z) = \sum_{d\ge0}\psi_{-d} z^{d}$.
Then taking the coefficients of $z^{-d}$ ($d \in \bb{Z}_{\ge 0}$)
in the both sides of \eqref{eq:pT=ed}, 
we have 
\begin{align}\label{eq:pTd}
\psi_0 T_d + \psi_{-1} T_{d+1}+ \psi_{-2} T_{d+2}+\cdots
=k (q/t)^{d/2} \eta_d + k^{-1} (q/t)^{-d/2} D_d.
\end{align}

Now we may prove the statement as follows.
Let us fix an $n \in \bb{Z}_{\ge 0}$.
Since $T_d$, $\eta_d$ and $D_d$ act by 0 
on the component $v_{G,n}$ with $n<d$,
it is enough calculate $T_d v_{G,n}$ with$1\le d \le n$.

Using \eqref{eq:pTd} with $d=n$ and noticing $\psi_0=1$, we have 
\begin{align*}
 T_n v_{G,n}=0  
&\Longleftrightarrow 
 \bigl((q/t)^{n/2}k \eta_n +  (q/t)^{-n/2}k^{-1} D_n\bigr)v_{G,n}=0
\\
&\Longleftrightarrow 
 \bigl( (q/t)^{n} k^2 \eta_n + D_n\bigr)v_{G,n}=0
\\
&\Longleftrightarrow 
 \bigl((q/t)^{n-1} u \eta_n + D_n\bigr)v_{n}=0.
\end{align*}
In the last line we changed the notation $k^2 q/t = u$ as in Lemma \ref{lem:u}.
Thus $T_n v_{G,n}=0$ follows from \eqref{eq:Whittaker:2}.

Next by \eqref{eq:pTd} with $d=n-1$  we have 
\begin{align*}
 \bigl((q/t)^{d-1} u \eta_d + D_d\bigr)v_{n}=0\ (d=n,n-1)
&\Longleftrightarrow
 (T_{n-1} + \psi_{-1} T_{n})v_{G,n}=0,\quad T_n v_{G,n}=0
\\
&\Longleftrightarrow
 T_{n-1} v_{G,n}=0,\quad T_n v_{G,n}=0.
\end{align*}
The last line follows from the fact that the operator $\psi_{-1}$ is injective.
A similar argument gives 
\begin{align*}
 \bigl((q/t)^{d-1} u \eta_d + D_d\bigr)v_{n}=0\ (d\ge 2)
\Longleftrightarrow
 T_{d} v_{G,n}=0\ (d\ge 2).
\end{align*}

At last, from \eqref{eq:pTd} with $d=1$ we have 
\begin{align*}
\eqref{eq:Whittaker:2}
&\Longleftrightarrow
(T_1 + \psi_{-1} T_{2}+\cdots+ \psi_{-n+1} T_{n})v_{G,n}=0,\quad 
T_{d}v_{G,n}=0\ (d\ge2)
\\
&\Longleftrightarrow
\eqref{eq:Whittaker}.
\end{align*}
\end{proof}

This Proposition and the definition \eqref{eq:vnu} gives

\begin{cor}\label{cor:whit}
The Whittaker condition \eqref{eq:Whittaker} is equivalent to the condition
\begin{align}
\label{eq:Whittaker:3:1}
&\bigl( u \eta_1 + D_1\bigr)v_{n+1}(u) = v_{n}(u),\\
\label{eq:Whittaker:3:2}
&(q/t)^{d-1} \eta_d v_{n}^{(r-1)} + D_d v_{n}^{(r)}= 0 \quad (d \ge2,\ r \ge 0)
\end{align}
for each $n \ge0$. 
Here we set $v_{n}^{(-1)} := 0$ for any $n$.
\end{cor}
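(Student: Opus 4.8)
The plan is to obtain the Corollary as a formal consequence of Proposition~\ref{prop:whit}: I would substitute the power-series expansion \eqref{eq:vnu} of $v_n(u)$ into the condition \eqref{eq:Whittaker:2} and then compare coefficients of the indeterminate $u$. Since \eqref{eq:Whittaker:2} splits into the single instance $d=1$ and the family $d\ge 2$, and since the operators $\eta_d$ and $D_d$ are independent of $u$ and each lowers the homogeneous degree by a fixed amount, the whole argument reduces to elementary bookkeeping in $u$; no new analytic input is required.

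First I would treat the case $d\ge 2$, where \eqref{eq:Whittaker:2} reads $\bigl((q/t)^{d-1}u\,\eta_d + D_d\bigr)v_n(u)=0$. Inserting $v_n(u)=\sum_{r\ge0}u^r v_n^{(r)}$ and letting $\eta_d,D_d$ act term by term, the factor $(q/t)^{d-1}u\,\eta_d$ shifts the summation index by one, so that the coefficient of $u^r$ on the left-hand side is $(q/t)^{d-1}\eta_d v_n^{(r-1)}+D_d v_n^{(r)}$. Because $u$ is a free indeterminate, each such coefficient must vanish separately, which is precisely \eqref{eq:Whittaker:3:2}; the convention $v_n^{(-1)}:=0$ absorbs the $r=0$ term, for which the $\eta_d$-contribution is absent.

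Next the case $d=1$. Read together with the identification $v_{G,n}=k^n v_n(k^2q/t)$ and the degree-graded form $T_1 v_{G,n+1}=v_{G,n}$ of the Whittaker condition recorded at the start of the proof of Proposition~\ref{prop:whit}, the $d=1$ instance becomes the generating-function identity $\bigl(u\,\eta_1+D_1\bigr)v_{n+1}(u)=v_n(u)$, which is exactly \eqref{eq:Whittaker:3:1}. Here, in contrast to the case $d\ge2$, I would deliberately keep the identity in its unexpanded, $u$-parametrized form, since it couples the two neighbouring degrees $n$ and $n+1$ and is best read as a recursion to be analysed later in \S\ref{sec:prf}; one checks for consistency that, after the degree shift, both sides are homogeneous of degree $n$.

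The one point requiring genuine care is the $d=1$ step: I must track the degree shift carried by the $T_1$-part of the Whittaker condition so that the two sides of \eqref{eq:Whittaker:3:1} agree in homogeneous degree (recall that $\eta_1$ and $D_1$ each lower degree by one, so the naive equality $\bigl(u\eta_1+D_1\bigr)v_n(u)=v_n(u)$ would be degree-inconsistent, forcing the shift to $v_{n+1}$). The remaining manipulations—interchanging the operator action with the infinite sum over $r$ and equating coefficients of $u^r$—are routine and legitimate precisely because $u$ is an indeterminate and each $v_n^{(r)}$ is homogeneous of a fixed degree.
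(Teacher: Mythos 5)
Your argument is correct and coincides with the paper's own derivation, which obtains the Corollary directly from Proposition~\ref{prop:whit} together with the expansion \eqref{eq:vnu} by comparing coefficients of $u$ for $d\ge2$ and keeping the $d=1$ relation in generating-function form. Your remark on the degree shift from $v_n$ to $v_{n+1}$ in \eqref{eq:Whittaker:3:1} correctly accounts for the grading carried by the condition $T_1 v_{G,n+1}=v_{G,n}$ used in the proof of the Proposition.
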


\subsection{Relation with Ding-Iohara-Miki algebra}
\label{ssec:DIM}

Let us make a brief comment about the current $\eta(z)$ and 
the Ding-Iohara-Miki algebra $\cal{U}(q,t)$ following \cite{FHHSY}.
This subsection is not necessary for the logical step of our proof,
but we think it will makes our picture more transparent.

Ding-Iohara algebra \cite{DI:1997} was introduced 
as a generalization of Drinfeld realization of the quantum affine algebra. 
In \cite[Appendix A]{FHHSY}, 
the authors introduced a version $\cal{U}(q,t)$ of the Ding-Iohara algebra 
having two parameters $q$ and $t$. 
The same algebra appeared in the work of Miki \cite{Mi},
so let us call $\cal{U}(q,t)$ the Ding-Iohara-Miki algebra.

Set $\widetilde{\bb{F}}:=\bb{Q}(q^{\pm1},t^{\pm1})$.
$\cal{U}(q,t)$ is a unital associative algebra over $\widetilde{\bb{F}}$
generated  by the Drinfeld currents 
\begin{align*}
x^\pm(z)=\sum_{n\in \bb{Z}}x^\pm_n z^{-n},\qquad 
\psi^\pm(z)=\sum_{\pm m \ge 0}\psi^\pm_m z^{-m},
\end{align*}
and the central element $\gamma^{\pm 1/2}$, satisfying the defining relations
\begin{align*}
&\psi^\pm(z) \psi^\pm(w)= \psi^\pm(w) \psi^\pm(z),
\\
&\psi^+(z)\psi^-(w)=
 \dfrac{g(\gamma^{+1} w/z)}{g(\gamma^{-1}w/z)}\psi^-(w)\psi^+(z),
\\
&\psi^+(z)x^\pm(w)=g(\gamma^{\mp 1/2}w/z)^{\mp1} x^\pm(w)\psi^+(z),
\\
&\psi^-(z)x^\pm(w)=g(\gamma^{\mp 1/2}z/w)^{\pm1} x^\pm(w)\psi^-(z),
\\
&[x^+(z),x^-(w)]=\dfrac{(1-q)(1-1/t)}{1-q/t}
\big( \delta(\gamma^{-1}z/w)\psi^+(\gamma^{1/2}w)-
\delta(\gamma z/w)\psi^-(\gamma^{-1/2}w) \big)
\\
&G^{\mp}(z/w)x^\pm(z)x^\pm(w)=G^{\pm}(z/w)x^\pm(w)x^\pm(z),
\end{align*}
with  
\begin{align*}
g(z) := \dfrac{G^+(z)}{G^-(z)},\qquad
G^\pm(z) := (1-q^{\pm1}z)(1-t^{\mp 1}z)(1-q^{\mp1}t^{\pm 1}z).
\end{align*}
In \cite{FFJMM} the authors noticed that 
one may add one more relation, which is an analogue of the Serre relation.
They also indicated that the Fock representation given below automatically 
satisfies the Serre-like relation.
Thus in the following discussion we can safely ignore it.

The algebra $\cal{U}(q,t)$ has a Hopf algebra structure whose 
coproduct and antipode are written explicitly in terms of 
the above generating currents.
Since those formulae are not necessary in this paper, 
we omit them 
(see \cite[Proposition A.2]{FHHSY} for the detail).

We say a representation of $\cal{U}(q,t)$ is of level $k$, 
if the central element $\gamma$ is realized by the constant 
$(t/q)^{k/2}=p^{-k/2}$.
We can construct a Fock representation $\rho_c(\cdot)$ of level one as follows.

Let us also introduce the following four vertex operators 
\cite[(1.7),(3.23),(3.27),(3.28)]{FHHSY}.
\begin{align*}
&\eta(z) := 
\exp\Big( \sum_{n>0} \dfrac{1-t^{-n}}{n}a_{-n} z^{n} \Big)
\exp\Big(-\sum_{n>0} \dfrac{1-t^{n} }{n}a_n    z^{-n}\Big),
\\
&\xi(z) :=
\exp\Big(-\sum_{n>0} \dfrac{1-t^{-n}}{n}p^{-n/2}a_{-n} z^{n}\Big)
\exp\Big(\sum_{n>0}  \dfrac{1-t^{n}}{n} p^{-n/2} a_n z^{-n}\Big),
\\
&\varphi^+(z) :=
\exp\Big(-\sum_{n>0} \dfrac{1-t^{n}}{n} (1-p^{-n})p^{n/4} a_n z^{-n}
    \Big),
\\
&\varphi^-(z) :=
\exp\Big(\sum_{n>0} \dfrac{1-t^{-n}}{n}(1-p^{-n})p^{n/4} a_{-n} z^{n}
     \Big).
\end{align*}

Then for a fixed $c\in \widetilde{\bb{F}}^{\times}$,
we have a level one representation $\rho_c(\cdot)$ of $\cal{U}(q,t)$ 
on our Fock space $\cal{F}$  (see the paragraph above \eqref{eq:Fock})
by setting
\begin{align*}
\rho_c(\gamma^{\pm 1/2})=(q/t)^{\mp 1/4},\quad
\rho_c(\psi^\pm(z))=\varphi^\pm(z),\quad 
\rho_c(x^+(z))=c\, \eta(z),\quad
\rho_c(x^-(z))=c^{-1} \xi(z).
\end{align*}
Here the current $\eta(z)$ appears.

By the work of Schiffmann and Vasserot \cite{SV}, 
the algebra $\cal{U}(q,t)$ has another description.
It is the (central extension of) Drinfeld double of 
the Hall algebra of elliptic curve \cite{BS}.
Due to this Hall algebra description, 
$\cal{U}(q,t)$  has a double-grading structure.
One may take another set of generators 
\begin{align}\label{eq:uqt:SV}
 \{u_{r,d} \mid (r,d)\in\bb{Z}^2\setminus\{(0,0)\}\}
\end{align}
with some explicit relation.
The relationship of the Drinfeld current description and the Hall algebra description 
is as follows:
$x^{\pm}_{n}=u_{-n,\pm1}$, $\psi^{\pm}_{m}=u_{\mp m,0}$.

If one ignores the central extension,
the relation enjoys the $\operatorname{SL}(2,\bb{Z})$,
which comes from the autoequivalence of the derived category of coherent sheaves on 
the elliptic curve.
As a consequence, Schiffmann and Vasserot discovered that 
there are infinite families of Heisenberg subalgebras in $\cal{U}(q,t)$.
For each line $\ell$ through the origin in the $\bb{Z}^2$ lattice \eqref{eq:uqt:SV},
the subalgebra $\cal{U}(q,t)_{\ell}$ 
generated by the elements $\{u_{r,d} \mid (r,d) \in \ell \}$ placed on $\ell$ 
satisfies the Heisenberg relation.

In our discussion, the subalgebra $\cal{U}(q,t)_{\ell_\infty}$ 
of $\cal{U}(q,t)$ generated by the elements $\{u_{(0,d)}\mid d \in \bb{Z}\setminus\{0\}\}$ 
will play an important role.
In this subalgebra, the Heisenberg relation becomes trivial,
so that the subalgebra is commutative. 
It corresponding to the algebra of Macdonald difference operators realized in the Fock space.
We will describe this subalgebra in the next subsection.

By B. Feigin and Tsymabliuk's work \cite{FT} and 
Schiffmann and Vasserot's work \cite{SV},
the Fock representation of $\cal{U}(q,t)$ 
is also related to the equivariant $K$-theory 
of the Hilbert schemes of points on the affine plane.
We will briefly mention their results in \S\ref{ssec:goem:uqt}

\subsection{The Whittaker condition in terms of higher-order Macdonald difference operators}

The current $\eta(z)$ is related to the Macdonald difference operator.
As indicated in \cite{SKAO},
in the Fourier expansion $\eta(z) = \sum_{n\in \bb{Z}}\eta_d z^{-d}$,
the zeroth mode $\eta_0$ gives the Heisenberg realization of the (stable limit of the) 
first-order Macdonald difference operator.
\begin{align}\label{eq:eta0:act}
\eta_0 P_\lambda(q,t)=P_\lambda(q,t) \cdot \bigl(1-(1-q)(1-1/t)e_1[B_\lambda]\bigr).
\end{align}

In \cite{FHHSY} the authors give an explicit description of 
the higher-order Macdonald difference operators and 
their algebra structure in terms of the Feigin-Odesskii shuffle product.
In terms of the Hall algebra description, this subalgebra corresponds to the subalgebra 
generated by $\{u_{0,d} \mid d \in \bb{Z}\setminus \{0\}\}$.

We have explicit formulae for a certain family of bosonized Macdonald difference operators
containing the first-order operator \eqref{eq:eta0:act}.
To give them, let us set 
$$
 \ve(x) := \dfrac{1-x}{1-x/t}.
$$
We consider $\ve(x)$ as a formal series in $x$.

\begin{fct}[{\cite[\S5]{BGHT},\cite[\S9]{S}}]\label{fct:E_r}
For $r \in \bb{Z}_{\ge 1}$, 
set the operator  $\widehat{E}_r$ acting on the Fock space $\cal{F}$ by
\begin{align}\label{eq:E_r}
 \widehat{E}_r := \dfrac{t^{-r(r+1)/2}}{(t^{-1};t^{-1})_r}
 \Bigl[\prod_{1\le i<j \le r} \ve(z_j/z_i) \cdot \nord{\eta(z_1)\eta(z_2)\cdots\eta(z_r)}\Bigr]_1.
\end{align}
Here $[ \,\cdots ]_1$ means taking constant part of the Laurent formal series in $z_i$'s,
and $\nord{\ }$ is the (usual) normal ordering of the Heisenberg algebra $\cal{H}$.
Then we have
\begin{align*}
 \widehat{E}_r J_\lambda(q,t) 
 = J_\lambda(q,t) e_r[s^\lambda]
\end{align*}
for any partition $\lambda$, 
where the alphabet $s^\lambda$ is the set of infinite variables given by 
$$
 s^\lambda := 
 (t^{-1}q^{\lambda_1},t^{-2}q^{\lambda_2},\ldots,t^{-\ell}q^{\lambda_\ell},
  t^{-\ell-1},t^{-\ell-2},\ldots).
$$
Here $\ell$ is the length of the partition $\lambda$.
\end{fct}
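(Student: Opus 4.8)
The plan is to show that $\widehat{E}_r$ is, up to the stated normalization, the bosonic realization of the $r$-th Macdonald $q$-difference operator in its stable (infinitely-many-variables) limit, and that this operator is diagonal in the basis $\{J_\lambda\}$ with the eigenvalue dictated by the classical theory. I would organize the argument into three steps: first an operator-product computation that turns the constant-term functional into an iterated residue, then a diagonality statement for $\widehat{E}_r$ on the Macdonald basis, and finally the evaluation of the eigenvalue.

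For the first step, a direct Heisenberg computation from $[a_m,a_{-m}]=m(1-q^m)/(1-t^m)$ gives the contraction
\begin{align*}
 \eta(z_i)\,\eta(z_j)
 = \frac{(1-q z_j/z_i)(1-t^{-1}z_j/z_i)}{(1-z_j/z_i)(1-q t^{-1}z_j/z_i)}\,
   \nord{\eta(z_i)\eta(z_j)},
\end{align*}
and hence an explicit rational factor relating $\nord{\eta(z_1)\cdots\eta(z_r)}$ to the ordered product $\eta(z_1)\cdots\eta(z_r)$. Multiplying by the kernel $\prod_{i<j}\ve(z_j/z_i)$ and applying $[\,\cdots]_1$ converts the definition of $\widehat{E}_r$ into an iterated residue in the $z_i$; the surviving poles, produced by the denominators $1-t^{-1}z_j/z_i$, reproduce exactly the subset-sum shape $\sum_{|I|=r}A_I(x;t)\prod_{i\in I}T_{q,x_i}$ of Macdonald's operator, where $T_{q,x_i}$ is the $q$-shift $x_i\mapsto qx_i$ and $A_I$ the usual rational coefficient. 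To make this concrete on symmetric functions I would use that the creation half of $\eta(z)$ multiplies by $H[X;z]/H[X;t^{-1}z]$, while the annihilation half $\exp(-\sum_{n>0}(1-q^n)\partial_{p_n}z^{-n})$ acts as the plethystic shift $p_n\mapsto p_n-(1-q^n)z^{-n}$.

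For diagonality and the eigenvalue, I would start from the base case $r=1$: there $\widehat{E}_1=\frac{t^{-1}}{1-t^{-1}}\eta_0$, and \eqref{eq:eta0:act} together with the rearrangement $\frac{t^{-1}}{1-t^{-1}}\bigl(1-(1-q)(1-1/t)e_1[B_\lambda]\bigr)=e_1[s^\lambda]$ (using $\sum_{i\ge1}t^{-i}=t^{-1}/(1-t^{-1})$) already gives $\widehat{E}_1 J_\lambda=e_1[s^\lambda]J_\lambda$. Since the whole family $\{\widehat{E}_r\}$ commutes — which can be read off from the same contraction computation, reflecting that it generates the commutative subalgebra $\cal{U}(q,t)_{\ell_\infty}$ — every $\widehat{E}_r$ preserves the eigenspaces of $\widehat{E}_1$; as the eigenvalues $e_1[s^\lambda]$ separate partitions over $\bb{Q}(q,t)$ (the alphabet $s^\lambda$ is recovered from $e_1[s^\lambda]$ by reading off coefficients of $t^{-i}$), this forces each $\widehat{E}_r$ to be diagonal in $\{J_\lambda\}$. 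To identify the eigenvalue I would either carry out the residue of the first step on $J_\lambda$, whose surviving $r$-subset contributions are indexed by the content alphabet $s^\lambda=(t^{-i}q^{\lambda_i})_{i\ge1}$ and sum to $e_r[s^\lambda]$, or match $\widehat{E}_r$ with the $n\to\infty$ limit of Macdonald's $D_n^r$, whose eigenvalue on $P_\lambda$ is $e_r(q^{\lambda_1}t^{n-1},\ldots,q^{\lambda_n})$ and becomes $e_r[s^\lambda]$ after the normalization $t^{-r(r+1)/2}/(t^{-1};t^{-1})_r$. Because $J_\lambda=c_\lambda(q,t)P_\lambda$ differs from $P_\lambda$ only by a scalar, the eigenvalue is unchanged.

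The main obstacle is the bookkeeping in the residue computations: one must verify that, after crossing the poles of $\prod_{i<j}\ve(z_j/z_i)$, the combinatorial sum collapses to precisely $e_r[s^\lambda]$ and not to a larger expression, and in particular that the empty rows $i>\ell$ of $\lambda$ contribute the geometric tail $t^{-\ell-1},t^{-\ell-2},\dots$ of $s^\lambda$. Controlling those tail terms — which is exactly where the normalization factor $t^{-r(r+1)/2}/(t^{-1};t^{-1})_r$ is consumed — is the delicate point; everything else is a finite, if lengthy, contour manipulation.
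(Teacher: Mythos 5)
First, note that the paper offers no proof of this statement: it is quoted as a \emph{Fact} with references to \cite[\S5]{BGHT} and \cite[\S9]{S}, so there is nothing in-paper to compare your argument with. Judged on its own, your skeleton (diagonality of $\widehat{E}_r$ on the Macdonald basis via commutativity of the family plus simplicity of the spectrum of $\widehat{E}_1$, then identification of the eigenvalue) is a legitimate strategy, and two of your steps are correct and complete: the $r=1$ computation $\widehat{E}_1=\tfrac{t^{-1}}{1-t^{-1}}\eta_0$ with the rearrangement of \eqref{eq:eta0:act} into $e_1[s^\lambda]$, and the observation that the eigenvalues $e_1[s^\lambda]=\sum_{i\ge1}t^{-i}q^{\lambda_i}$ separate partitions.

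There are, however, two genuine gaps and one computational slip. The slip: your contraction factor is inverted. From $[a_m,a_{-m}]=m(1-q^m)/(1-t^m)$ one gets $\eta(z_i)\eta(z_j)=g(z_j/z_i)\nord{\eta(z_i)\eta(z_j)}$ with $g(x)=\frac{(1-x)(1-qx/t)}{(1-qx)(1-x/t)}$ (this is \eqref{eq:etaeta} of the paper), i.e.\ the reciprocal of what you wrote; the useful identity is then $\ve(x)/g(x)=\ve(qx)$, which converts the normal-ordered definition into $\bigl[\prod_{i<j}\ve(qz_j/z_i)\,\eta(z_1)\cdots\eta(z_r)\bigr]_1$, and the relevant poles sit at $z_j=tz_i/q$, not where your denominators $1-t^{-1}z_j/z_i$ would put them. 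The gaps: (i) commutativity of the family $\{\widehat{E}_r\}$ cannot simply be ``read off from the same contraction computation'' --- it is the main nontrivial theorem of \cite{FHHSY} (established there via the shuffle-algebra and Gordon-filtration machinery) and of \cite{S}; asserting it makes your argument circular relative to the very references the Fact cites. (ii) The eigenvalue for $r\ge2$ is never actually computed: both of your proposed routes (carrying out the iterated residue on $J_\lambda$, or matching $\widehat{E}_r$ with the stable $n\to\infty$ limit of Macdonald's $D_n^r$) reduce to the same unperformed calculation, and that calculation --- including the bookkeeping of the geometric tail $t^{-\ell-1},t^{-\ell-2},\ldots$ against the normalization $t^{-r(r+1)/2}/(t^{-1};t^{-1})_r$, which you correctly single out as the delicate point --- is precisely the content of the cited results. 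As it stands the proposal is an outline that defers its two essential ingredients to the literature rather than a self-contained proof.
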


For later use, we give a formula 
translating the alphabets $B_\lambda$ and $s^\lambda$.

\begin{lem}\label{lem:B-s}
For any $r \in \bb{Z}_{\ge 1}$ we have 
\begin{align*} 
 h_r[B_\lambda]
= \sum_{k=0}^r (-t)^k \theta_{r-k} 
   \sum_{r_i \ge 1,\, \sum_i r_i = k} 
    q^{\sum_{i}(i-1)r_i} e_{r_1,r_2,\ldots}[s^\lambda]
\end{align*}
with $e_{r_1,r_2,\ldots} := e_{r_1} e_{r_2} \cdots$ and
\begin{align*}
\theta_{n} := \sum_{\lambda \, \vdash n}
 \dfrac{1}{c_\lambda(q^{-1},t^{-1}) c_{\lambda'}(q,t)}.
\end{align*}
\end{lem}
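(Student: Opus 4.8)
The plan is to prove the identity by passing to generating functions in an auxiliary variable $u$ and comparing the two sides. Multiplying the claimed formula by $u^r$ and summing over $r\ge0$, the convolution structure (the factor $\theta_{r-k}$ against the inner $k$-sum) turns the right-hand side into a product of two generating series, so that the lemma becomes equivalent to the single identity
\begin{align*}
\sum_{r\ge0} u^r h_r[B_\lambda]
= \Bigl(\sum_{m\ge0}\theta_m u^m\Bigr)
  \Bigl(\sum_{k\ge0}(-ut)^k \sum_{r_i\ge1,\,\sum_i r_i=k} q^{\sum_i(i-1)r_i} e_{r_1,r_2,\ldots}[s^\lambda]\Bigr).
\end{align*}
First I would evaluate the left-hand series in closed form. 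By definition $\sum_{r\ge0}u^r h_r[B_\lambda]=H[B_\lambda;u]=\prod_{s\in\lambda}(1-u q^{j(s)-1}t^{1-i(s)})^{-1}$; grouping the boxes row by row and using the $q$-Pochhammer symbol this becomes
\begin{align*}
 H[B_\lambda;u]=\prod_{i=1}^{\ell(\lambda)} (u t^{1-i};q)_{\lambda_i}^{-1}
 =\prod_{i=1}^{\ell(\lambda)} \dfrac{(u q^{\lambda_i} t^{1-i};q)_\infty}{(u t^{1-i};q)_\infty}.
\end{align*}

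Next I would treat the second factor on the right. The inner sum over sequences $(r_1,r_2,\ldots)$ with $r_i\ge1$ is precisely the expansion of an Euler product: writing $\sum_{m\ge0}w^m e_m[s^\lambda]=\prod_{a}(1+w s^\lambda_a)$ for the elementary generating series of the alphabet $s^\lambda$, one has
\begin{align*}
 \sum_{k\ge0}(-ut)^k \sum_{r_i\ge1,\,\sum_i r_i=k} q^{\sum_i(i-1)r_i} e_{r_1,r_2,\ldots}[s^\lambda]
 =\prod_{i\ge1}\ \sum_{m\ge0}(-u t\, q^{i-1})^m e_m[s^\lambda]
 =\prod_{i\ge1}\prod_{a}\bigl(1-u t\, q^{i-1} s^\lambda_a\bigr).
\end{align*}
Substituting the explicit entries $s^\lambda_a=t^{-a}q^{\lambda_a}$ for $a\le\ell(\lambda)$ and $s^\lambda_a=t^{-a}$ for $a>\ell(\lambda)$, the product over $i\ge1$ again collapses into $q$-Pochhammer symbols and yields $\prod_{i=1}^{\ell(\lambda)}(u q^{\lambda_i} t^{1-i};q)_\infty \prod_{i>\ell(\lambda)}(u t^{1-i};q)_\infty$. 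Dividing $H[B_\lambda;u]$ by this last product, every $\lambda$-dependent factor cancels and one is left with
\begin{align*}
 \prod_{i\ge1}(u t^{1-i};q)_\infty^{-1}
 =\prod_{i,j\ge0}\bigl(1-u q^{i}t^{-j}\bigr)^{-1},
\end{align*}
which does not involve $\lambda$ at all. Thus the whole lemma collapses to the single $\lambda$-independent identity $\sum_{m\ge0}\theta_m u^m=\prod_{i,j\ge0}(1-u q^{i}t^{-j})^{-1}$, i.e.\ $\theta_m=h_m\bigl[\{q^i t^{-j}\}_{i,j\ge0}\bigr]$.

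The remaining and genuinely substantive step is this last identity, which rewrites the \emph{combinatorial} generating function of the numbers $\theta_m=\sum_{\mu\vdash m}\bigl(c_\mu(q^{-1},t^{-1})c_{\mu'}(q,t)\bigr)^{-1}$ as the explicit infinite product above. I expect this to be the main obstacle, since it is exactly the point where the definition of $\theta_m$ through the hook products $c_\mu,c_{\mu'}$ must be reconciled with a closed form. The natural route is Macdonald's Cauchy identity for the integral forms, $\sum_\mu J_\mu[X]J_\mu[Y]/(c_\mu(q,t)c'_\mu(q,t))=\prod_{i,j}(t x_i y_j;q)_\infty/(x_i y_j;q)_\infty$, combined with the norm formula $\langle J_\mu,J_\mu\rangle_{q,t}=c_\mu(q,t)c'_\mu(q,t)$ already recorded above: specializing one alphabet to a geometric progression in $t$ and using the elementary relation $c_\mu(q^{-1},t^{-1})=q^{-n(\mu')}t^{-n(\mu)-|\mu|}c_\mu(q,t)$ converts the left-hand sum into $\sum_\mu u^{|\mu|}/(c_\mu(q^{-1},t^{-1})c_{\mu'}(q,t))$ and the Cauchy kernel into the desired product. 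Once this $\lambda$-independent identity is established, the chain of equalities above proves the lemma; the only care needed elsewhere is routine bookkeeping of the $q$-Pochhammer manipulations and of the conventions for arms, legs and the entries of $s^\lambda$.
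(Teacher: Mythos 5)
Your proof follows essentially the same route as the paper's: both factor the generating function $H[B_\lambda;u]$ as $H[B_\infty;u]$ times an Euler product over the alphabet $s^\lambda$ (the paper via the power-sum identity $p_n[B_\lambda]=\tfrac{1}{(1-q^n)(1-t^{-n})}-\tfrac{t^n}{1-q^n}p_n[s^\lambda]$, you via the equivalent $q$-Pochhammer manipulations), and both reduce the lemma to the $\lambda$-independent identity $\theta_m=h_m[B_\infty]$ proved by specializing the Cauchy kernel of Macdonald symmetric functions. The only caveat, inherited from the paper's own statement and proof, is that the coefficient extraction from the Euler product naturally produces a sum over sequences $(r_i)_{i\ge1}$ of \emph{non-negative} integers (with $e_0=1$) rather than over compositions with $r_i\ge1$ as written, so your identification of the inner sum with the Euler product is exactly as (in)accurate as the paper's.
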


\begin{proof}
By the relation
$$
 e_1[B_\lambda] = \dfrac{1}{(1-q)(1-t^{-1})}-\dfrac{t }{1-q}e_1[s^\lambda],
$$
we have
\begin{align*}
\prod_{s\in\lambda}(1-u B_s)^{-1} 
= \prod_{i,j\ge0} (1-u q^i t^{-j})^{-1}
  \prod_{i,j\ge0} (1+u t q^j s^{\lambda}_i).
\end{align*}
Taking the coefficients of $u^r$ in both sides,
we have 
\begin{align*} 
 h_r[B_\lambda]
= \sum_{k=0}^r (-t)^k h_{r-k}[B_\infty]  
   \sum_{r_i \ge 1,\, \sum_i r_i = k} 
    q^{\sum_{i}(i-1)r_i} e_{r_1,r_2,\ldots}[s^\lambda].
\end{align*}
Here we introduced the new alphabet 
\begin{align*}
 B_\infty := \{q^{i} t^{-j} \mid i,j \in \bb{Z}_{\ge 0} \}
\end{align*}
consisting of infinite variables.

Now we compute $h_r[B_\infty]$.
Recall the Cauchy kernel of Macdonald symmetric functions 
(see \cite[Chap. VI, \S2]{M} for the detail):
\begin{align*}
\Pi(X,Y;q,t) := 
 \exp\Bigl(\sum_{n>0} \dfrac{1}{n}\dfrac{1-t^n}{1-q^n}p_n[X]p_n[Y]\Bigr).
\end{align*}
It enjoys the formula
\begin{align}\label{eq:Cauchy}
\Pi(X,Y;q,t)
 = \sum_{\lambda} \dfrac{J_\lambda[X;q,t]J_\lambda[Y;q,t]}{
    \langle J_\lambda(q,t),J_\lambda(q,t) \rangle_{q,t}}.
\end{align}
Here we denote by $P_\lambda[X;q,t]$ the Macdonald symmetric function $P_\lambda(q,t)$ 
with the alphabet $X$ as variables.
$Q_\lambda[X;q,t]$ and  $J_\lambda[X;q,t]$ are similarly defined.

Let us also recall the specialization $\ve_{u,t}$ \cite[Chap VI, \S6]{M} 
of Macdonald symmetric functions.
Let $u$ be an indeterminate.
Under the homomorphism $\ve_{u,t}:\Lambda_{\bb{F}} \to \bb{F}[u]$ defined by
\begin{align*}
\ve_{u,t} p_r  := \dfrac{1-u^r}{1-t^r},
\end{align*}
we have
\begin{align}\label{eq:spec_ve}
\ve_{u,t} J_\lambda(q,t) = 
 \prod_{s \in \lambda} (t^{i(s)-1}-q^{j(s)-1} u).
\end{align}

Then, by applying the homomorphism $\ve_{0,t}:p_r\mapsto 1/(1-t^r)$ 
on the alphabets $X$ and $Y$ in \eqref{eq:Cauchy}, 
we have
$$
  \exp\Bigl(\sum_{n>0} \dfrac{1}{n(1-q^n)(1-t^n)} \Bigr)
=\sum_{\lambda} \dfrac{t^{2n(\lambda)}}{c_\lambda(q,t)c_{\lambda'}(q,t)}
$$
Replacing $t$ with $t^{-1}$ and taking the degree $n$ part,
we have the consequence.
\end{proof}

Following \cite{BGHT}, we define for each symmetric function $f$ 
a version of Macdonald operator
$\Delta_f: \Lambda_{\bb{F}} \to \Lambda_{\bb{F}}$
by 
\begin{align}\label{eq:Delta_f}
\Delta_f J_\lambda(q,t) := J_\lambda(q,t) f[B_\lambda].
\end{align}
Here we used the alphabet $B_\lambda$ (see \eqref{eq:B} for the definition).
By the action \eqref{eq:eta0:act} of $\eta_0$, we have 
\begin{align}\label{eq:eta0Delta}
1- \eta_0 = (1-q)(1-t^{-1}) \Delta_{e_1}.
\end{align}

Recalling the definition \eqref{eq:AY:J} of $v_n^{(r)}$, we have 
$$
 v_n^{(r)} = \Delta_{h_r} v_n^{(0)}.
$$
with $\Delta_{h_{-1}} := 0$.
Then we can rewrite the condition \eqref{eq:Whittaker:3:2}
into the following form:

\begin{cor}\label{cor:whit:2}
The Whittaker condition \eqref{eq:Whittaker} is equivalent to the condition
\begin{align}
\label{eq:main:1}
&\bigl( u \eta_1 + D_1\bigr)v_{n+1}(u) = v_{n}(u),\\
\label{eq:main:2}
&D_d v_n^{(0)} = 0,
\\
\label{eq:main:3}
&\bigl((q/t)^{d-1} \eta_d \Delta_{h_{r-1}}+ D_d \Delta_{h_{r}}\bigr) v_{n}^{(0)} = 
 0 \quad  (d \ge 2,\ r \ge 0)
\end{align}
for each $n \ge 0$. 
\end{cor}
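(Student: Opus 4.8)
The plan is to deduce this corollary directly from Corollary~\ref{cor:whit}, since its content is a purely formal reorganization of the two conditions \eqref{eq:Whittaker:3:1} and \eqref{eq:Whittaker:3:2} using the operators $\Delta_f$ from \eqref{eq:Delta_f}. The first condition \eqref{eq:main:1} is literally \eqref{eq:Whittaker:3:1}, so there is nothing to do there; the entire task is to rewrite the family \eqref{eq:Whittaker:3:2} indexed by $d \ge 2$ and $r \ge 0$.

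First I would invoke the identity $v_n^{(r)} = \Delta_{h_r} v_n^{(0)}$ recorded just above the statement. This holds for every $r \ge 0$ because $\{J_\lambda(q,t) \mid \lambda \vdash n\}$ is a basis of $\Lambda_{\bb{F},n}$ and, by \eqref{eq:Delta_f}, the operator $\Delta_{h_r}$ acts diagonally on this basis through the scalar $h_r[B_\lambda]$, leaving the coefficients $q^{n(\lambda')}/\langle J_\lambda(q,t),J_\lambda(q,t)\rangle_{q,t}$ of \eqref{eq:AY:J} untouched. Substituting this identity, together with the matching conventions $v_n^{(-1)} = 0$ and $\Delta_{h_{-1}} = 0$, into \eqref{eq:Whittaker:3:2} turns each relation $(q/t)^{d-1}\eta_d v_n^{(r-1)} + D_d v_n^{(r)} = 0$ into
\[
 \bigl((q/t)^{d-1}\eta_d \Delta_{h_{r-1}} + D_d \Delta_{h_r}\bigr) v_n^{(0)} = 0,
\]
which is exactly \eqref{eq:main:3}.

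It then remains only to account for \eqref{eq:main:2}, and here I would simply observe that it is the $r = 0$ instance of \eqref{eq:main:3}: putting $r = 0$ and using $\Delta_{h_{-1}} = 0$ together with $\Delta_{h_0} = \operatorname{id}$ (since $h_0 = 1$, so $h_0[B_\lambda] = 1$) collapses \eqref{eq:main:3} to $D_d v_n^{(0)} = 0$. Thus \eqref{eq:main:2} is logically subsumed by \eqref{eq:main:3}; it is displayed separately only because the subsequent argument treats it on its own, as the easiest of the three parts. Consequently there is essentially no obstacle in this corollary: it is a bookkeeping restatement of Corollary~\ref{cor:whit}, and the only points to check are the elementary conventions $\Delta_{h_0} = \operatorname{id}$ and the correspondence $\Delta_{h_{-1}} = 0 \leftrightarrow v_n^{(-1)} = 0$. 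The genuine difficulty is deferred to the actual verification of \eqref{eq:main:1}, \eqref{eq:main:2} and \eqref{eq:main:3} themselves, with \eqref{eq:main:3} anticipated (per the introduction) to be the hard part.
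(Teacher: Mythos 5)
Your proposal is correct and matches the paper exactly: the paper offers no separate proof of this corollary, treating it as an immediate reorganization of Corollary \ref{cor:whit} via the identity $v_n^{(r)} = \Delta_{h_r} v_n^{(0)}$ (which follows from \eqref{eq:AY:J} and \eqref{eq:Delta_f}), with \eqref{eq:main:2} being the $r=0$ instance of \eqref{eq:main:3} singled out for separate treatment later. Your observations about $\Delta_{h_0}=\operatorname{id}$ and the correspondence $\Delta_{h_{-1}}=0 \leftrightarrow v_n^{(-1)}=0$ are precisely the bookkeeping the paper leaves implicit.
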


In the next section we give a proof of these relations.
\eqref{eq:main:1} is shown in \S \ref{subsec:part1}.
In \S \ref{subsec:part2} we show \eqref{eq:main:2}.
Finally in \S \ref{subsec:part3} we show \eqref{eq:main:3}.

\section{Proof}
\label{sec:prf}

\subsection{Part 1 of the proof}
\label{subsec:part1}

In this subsection we show the relation
\begin{align}\label{eq:part2}
 \bigl( u \eta_1 + D_1\bigr)v_{n+1}(u) = v_{n}(u)
\end{align}
in Corollary \ref{cor:whit:2}.
Let us introduce a few symbols.
For a box $s$ in a Young diagram, the symbols $B_s$ is defined to be
\begin{align}\label{eq:Bs}
 B_s = B_s(q,t) := q^{j(s)-1}t^{1-i(s)}.
\end{align}
Thus the alphabet $B_\lambda$ defined in \eqref{eq:B} can be rewritten as 
$$
 B_\lambda = \{B_s \mid s \in \lambda\}.
$$
We also have
\begin{align}\label{eq:Deltap1}
 p_1[B_\lambda] = e_1[B_\lambda] = \sum_{s\in\lambda}B_s.
\end{align}
Next we rewrite the Pieri formula \cite[Chap VI, \S7]{M} of 
the Macdonald symmetric function into the following form:
\begin{align}\label{eq:Pieri:d}
 p_1 J_\mu(q,t) 
 = \sum_{\lambda} J_\mu(q,t) d_{\lambda/\mu}(q,t),\qquad
 d_{\lambda/\mu}(q,t) := 
  \psi'_{\lambda/\mu}(q,t) \dfrac{c_{\mu}(q,t)}{c'_{\lambda}(q,t)}.
\end{align}
The running index $\lambda$ runs over the set of partitions such that 
$|\lambda|= |\mu| +1$ and $\lambda \supset \mu$.
See \eqref{eq:cc'} 
for the factor $c'_{\lambda}(q,t)$. 
The factor $\psi'_{\lambda / \mu}(q,t)$ is defined by
\begin{align}\label{eq:psi'}
 \psi'_{\lambda / \mu}(q,t) := 
 \prod_{s \in C_{\lambda / \mu} - R_{\lambda / \mu}}
  \dfrac{b_{\lambda}(s;q,t)}{b_{\mu}(s;q,t)},
\qquad
b_{\lambda}(s;q,t) := 
 \begin{cases}
  \dfrac{c_{\lambda}(s;q,t)}{c'_{\lambda}(s;q,t)} & s \in \lambda
  \\ 
  1 & s \not\in \lambda
 \end{cases}.
\end{align}
Here 
$C_{\lambda / \mu}$ (resp. $R_{\lambda / \mu}$)
is the set of columns (resp. rows) that intersect with 
the boxes in $\lambda / \mu$.

Using the Macdonald inner product \eqref{eq:ipqt}, we also have
\begin{align}\label{eq:Pieri:c}
 \partial_{p_1} J_\lambda(q,t) 
 = \sum_{\mu} J_\mu(q,t) c_{\lambda/\mu}(q,t),\qquad
 c_{\lambda/\mu}(q,t) :=   
   \psi'_{\lambda/\mu}(q,t) \dfrac{c'_{\lambda}(q,t)}{c'_{\mu}(q,t)} \dfrac{1-t}{1-q}.
\end{align}
The running index $\mu$ runs over the set of partitions such that 
$|\mu| = |\lambda|-1$ and $\mu \subset \lambda$.

\begin{lem}
We have
\begin{align}\label{eq:eta1}
\eta_1 J_\lambda(q,t) 
 = -(1-q) \sum_{\mu} J_{\mu}(q,t) 
     c_{\lambda/\mu}(q,t) B_{s_{\lambda/\mu}}.
\end{align}
The range of $\mu$ is the same as in \eqref{eq:Pieri:c}.
The symbol $s_{\lambda/\mu}$ is 
the unique box belonging to the skew diagram $\lambda/\mu$. 
\end{lem}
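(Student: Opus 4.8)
The plan is to avoid computing the lowering mode $\eta_1$ directly, and instead to realize it as a commutator of the Pieri operator $\partial_{p_1}$ with the \emph{diagonalizable} zeroth mode $\eta_0$. Both ingredients of such a commutator are already under control: by \eqref{eq:eta0:act} the operator $\eta_0$ acts on $J_\lambda(q,t)$ by a scalar (the same eigenvalue as on $P_\lambda$, since the two differ by the scalar $c_\lambda(q,t)$), and by the Pieri formula \eqref{eq:Pieri:c} the operator $\partial_{p_1}$ removes a single box. So once $\eta_1$ is expressed through $\eta_0$ and $\partial_{p_1}$, the right-hand side of \eqref{eq:eta1} should fall out, with the weight $B_{s_{\lambda/\mu}}$ appearing as an eigenvalue difference.

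First I would establish the commutation relation $[\partial_{p_1},\eta_d]=(1-t^{-1})\eta_{d+1}$ for all $d$; taking $d=0$ gives $\eta_1=(1-t^{-1})^{-1}[\partial_{p_1},\eta_0]$. This is a one-line free-field computation from the bosonic presentation of $\eta(z)$: in the creation exponential only the $a_{-1}$-term fails to commute with $a_1$, so with $[a_1,a_{-1}]=\tfrac{1-q}{1-t}$ one gets $[a_1,\eta(z)]=-t^{-1}(1-q)\,z\,\eta(z)$. Substituting $a_1=\tfrac{1-q}{1-t}\partial_{p_1}$ converts this into $[\partial_{p_1},\eta(z)]=(1-t^{-1})\,z\,\eta(z)$, and reading off the coefficient of $z^{-d}$ yields the relation. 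The only delicate point here is the exact scalar, since this is where the deformation parameters enter and an error would corrupt the final coefficient $-(1-q)$.

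With this in hand the computation collapses. Writing $\eta_0 J_\lambda=\varepsilon_\lambda J_\lambda$ with $\varepsilon_\lambda=1-(1-q)(1-t^{-1})e_1[B_\lambda]$ and $\partial_{p_1}J_\lambda=\sum_\mu c_{\lambda/\mu}(q,t)J_\mu$, and using that $\varepsilon_\lambda$ is a scalar, I obtain $\eta_1 J_\lambda=(1-t^{-1})^{-1}\sum_\mu c_{\lambda/\mu}(q,t)(\varepsilon_\lambda-\varepsilon_\mu)J_\mu$. The final step is the observation that for $\mu\subset\lambda$ differing by the single box $s_{\lambda/\mu}$ one has $e_1[B_\lambda]-e_1[B_\mu]=B_{s_{\lambda/\mu}}$, which is immediate from $e_1[B_\lambda]=\sum_{s\in\lambda}B_s$ in \eqref{eq:Deltap1}. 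Hence $\varepsilon_\lambda-\varepsilon_\mu=-(1-q)(1-t^{-1})B_{s_{\lambda/\mu}}$, the factor $(1-t^{-1})^{-1}$ cancels, and one reads off exactly \eqref{eq:eta1}. I expect no genuine obstacle beyond bookkeeping: the whole argument reduces the lowering mode $\eta_1$ to operators that are either diagonal on Macdonald polynomials or governed by a known Pieri rule, and the nontrivial factor $B_{s_{\lambda/\mu}}$ is produced automatically as the localization of the $\eta_0$-eigenvalue difference to the removed box.
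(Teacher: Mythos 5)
Your proposal is correct and follows essentially the same route as the paper: both reduce $\eta_1$ to $(1-t^{-1})^{-1}[\partial_{p_1},\eta_0]$, then combine the diagonal action of $\eta_0$ (equivalently $\Delta_{p_1}$) with the Pieri rule for $\partial_{p_1}$ so that the eigenvalue difference localizes to $B_{s_{\lambda/\mu}}$. The scalar bookkeeping in your free-field computation of $[\partial_{p_1},\eta(z)]=(1-t^{-1})z\,\eta(z)$ checks out, so the final coefficient $-(1-q)$ comes out right.
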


\begin{proof}
By the definition 
$$
 \eta(z) = 
 \exp\Big( \sum_{n>0} \dfrac{1-t^{-n}}{n}p_n  z^{n} \Big)
 \exp\Big(-\sum_{n>0} (1-q^{n})\partial_{p_n} z^{-n}\Big)
$$
of $\eta(z)$, we have
$$
 \eta_1 = (1-t^{-1})^{-1}[\partial_{p_1},\eta_0].
$$
We also have $\eta_0 = 1-(1-q)(1-t^{-1})\Delta_{p_1}$ \eqref{eq:eta0Delta}.
Then
\begin{align*}
 \eta_1 J_\lambda(q,t) 
&= -(1-q) [\partial_{p_1},\Delta_{p_1}] J_\lambda(q,t) 
\\
&= -(1-q) 
   \sum_{\mu} J_\mu(q,t) c_{\lambda/\mu}(q,t) \bigl(p_1[B_\lambda]-p_1[B_\mu]\bigr)
\\
&= -(1-q) \sum_{\mu} J_\mu(q,t) c_{\lambda/\mu}(q,t) B_{s_{\lambda/\mu}}.
\end{align*}
At the last line we used \eqref{eq:Deltap1}.
This is the desired formula.
\end{proof}

\begin{rmk}
Using the nabla operator introduced in \cite{BGHT},
we can rewrite \eqref{eq:eta1} in a nice form.
Let $\nabla:\Lambda_{\bb{F}} \to \Lambda_{\bb{F}}$ be  the operator 
defined by
\begin{align}\label{eq:nabla}
 \nabla J_\lambda(q,t) 
 := \Delta_{e_{|\lambda|}} J_\lambda(q,t)
  = J_\lambda(q,t) \prod_{s \in \lambda} B_s
  = J_\lambda(q,t) q^{n(\lambda')}t^{-n(\lambda)}.
\end{align}
Then we have
\begin{align}\label{eq:eta-nabla}
\eta_1 = -(1-q)\nabla^{-1} \partial_{p_1} \nabla.
\end{align}
This formula appears in \cite[I.12 (iii)]{BGHT}.
\end{rmk}

Now we start 
\begin{proof}[{Proof of \eqref{eq:part2}}]
Recall
$$
 v_n(u) = \sum_{r=0}^\infty u^r v_n^{(r)}
 =\sum_{\lambda\,\vdash n} J_\lambda(q,t)
  \dfrac{q^{n(\lambda')}}{\langle J_\lambda(q,t),J_\lambda(q,t) \rangle_{q,t}}
  H[B_\lambda;u].
$$
Then by \eqref{eq:eta1} we have
\begin{align*}
\eta_1 v_n(u)
&=-(1-q) \sum_{\lambda \, \vdash n} 
   \dfrac{q^{n(\lambda')}}{\langle J_\lambda(q,t),J_\lambda(q,t) \rangle_{q,t}}
   H[B_\lambda;u]
   \sum_{\substack{\mu\, \vdash n \\ \mu \subset \lambda}}   
    J_\mu(q,t) c_{\lambda/\mu}(q,t) B_{s_{\lambda/\mu}}.
\\
&=-(1-q) \sum_{\mu \, \vdash n-1}  J_\mu(q,t)  
   \dfrac{q^{n(\mu')}}{\langle J_\mu(q,t),J_\mu(q,t) \rangle_{q,t}}
   H[B_\mu;u]
\\
& \qquad \qquad \qquad \qquad \times
   \sum_{\substack{\lambda\, \vdash n \\ \lambda \supset \mu} } 
    q^{j(s_{\lambda/\mu})-1} c_{\lambda/\mu}(q,t) 
    \dfrac{c_\mu(q,t) c'_\mu(q,t)}{c_\lambda(q,t) c'_\lambda(q,t)}
    \dfrac{B_{s_{\lambda/\mu}}}{1-u B_{s_{\lambda/\mu}}}.
\\
&=-(1-t) \sum_{\mu \, \vdash n-1}  J_\mu(q,t)  
   \dfrac{q^{n(\mu')}}{\langle J_\mu(q,t),J_\mu(q,t) \rangle_{q,t}}
   H[B_\mu;u]
 \sum_{\substack{\lambda\, \vdash n \\ \lambda \supset \mu} } 
  q^{j(s_{\lambda/\mu})-1} d_{\lambda/\mu}(q,t) 
    \dfrac{B_{s_{\lambda/\mu}}}{1-u B_{s_{\lambda/\mu}}}.
\end{align*}
At the last line we used the definitions \eqref{eq:Pieri:c}, \eqref{eq:Pieri:d} of 
$c_{\lambda/\mu}(q,t)$, $d_{\lambda/\mu}(q,t)$.

Next we compute $D_1 v_n(u)$.
By $D_1 = (1-q) \partial_{p_1}$ and \eqref{eq:Pieri:d}, we have
\begin{align*}
 D_1 v_n(u) 
&= (1-q)\partial_{p_1} 
  \sum_{\lambda\,\vdash n} J_\lambda(q,t)
   \dfrac{q^{n(\lambda')}}{\langle J_\lambda(q,t),J_\lambda(q,t) \rangle_{q,t}}
   H[B_\lambda;u]
\\
&= (1-q) \sum_{\lambda \, \vdash n} 
   \dfrac{q^{n(\lambda')}}{\langle J_\lambda(q,t),J_\lambda(q,t) \rangle_{q,t}}
   H[B_\lambda;u]
   \sum_{\substack{\mu\, \vdash n \\ \mu \subset \lambda}}   
    J_\mu(q,t) c_{\lambda/\mu}(q,t) 
\\
&= (1-q) \sum_{\mu \, \vdash n-1}  J_\mu(q,t)  
   \dfrac{q^{n(\mu')}}{\langle J_\mu(q,t),J_\mu(q,t) \rangle_{q,t}}
   H[B_\mu;u]
\\
& \qquad \qquad \qquad \qquad \times
   \sum_{\substack{\lambda\, \vdash n \\ \lambda \supset \mu} } 
    q^{j(s_{\lambda/\mu})-1} c_{\lambda/\mu}(q,t) 
    \dfrac{c_\mu(q,t) c'_\mu(q,t)}{c_\lambda(q,t) c'_\lambda(q,t)}
    \dfrac{1}{1-u B_{s_{\lambda/\mu}}}.
\\
&= (1-t) \sum_{\mu \, \vdash n-1}  J_\mu(q,t)  
   \dfrac{q^{n(\mu')}}{\langle J_\mu(q,t),J_\mu(q,t) \rangle_{q,t}}
   H[B_\mu;u]
 \sum_{\substack{\lambda\, \vdash n \\ \lambda \supset \mu} } 
  q^{j(s_{\lambda/\mu})-1} d_{\lambda/\mu}(q,t) 
    \dfrac{1}{1-u B_{s_{\lambda/\mu}}}.
\end{align*}

Combining these calculations, we have
\begin{align*}
(u \eta_1 + D_1) v_n(u) 
&= (1-t) \sum_{\mu \, \vdash n-1}  J_\mu(q,t)  
   \dfrac{q^{n(\mu')}}{\langle J_\mu(q,t),J_\mu(q,t) \rangle_{q,t}}
   H[B_\mu;u]
 \sum_{\substack{\lambda\, \vdash n \\ \lambda \supset \mu} } 
  q^{j(s_{\lambda/\mu})-1} d_{\lambda/\mu}(q,t).
\end{align*}

Now we want to calculate the summation over $\lambda$, fixing a partition $\mu$.
Using Lemma \ref{lem:dq} below, we have 
\begin{align}\label{eq:dq0}
 (1-t) \sum_{\substack{\lambda\, \vdash n \\ \lambda \supset \mu} } 
  q^{j(s_{\lambda/\mu})-1} d_{\lambda/\mu}(q,t) = 1.
\end{align}
Thus we have
\begin{align*}
(u \eta_1 + D_1) v_n(u) 
= \sum_{\mu \, \vdash n-1}  J_\mu(q,t)  
   \dfrac{q^{n(\mu')}}{\langle J_\mu(q,t),J_\mu(q,t) \rangle_{q,t}}
   H[B_\mu;u]
= v_{n-1}(u).
\end{align*}
\end{proof}

Before giving Lemma \ref{lem:dq},
we recall a few notations on the plethystic formula.
(For more explanation, see \cite{BGHT,GT,H:2003}.)
Let $X$ be an alphabet $X$ and $f$ be a symmetric function $f$.
For an indeterminate $z$, the plethystic transformation is defined to be 
\begin{align*}
&f[z X] := f[X]\Bigr|_{p_n[X] \mapsto z^n p_n[X]}.
\end{align*}
We also denote by $\omega$ the classical involution. 
It is defined as 
\begin{align*}
&\omega f[X] := f[X]\Bigr|_{p_n[X] \mapsto (-1)^{n+1} p_n[X]}.
\end{align*}
$\omega$ interchanges the elementary and complete symmetric functions:
$\omega e_r[X] = h_r[X]$.
For Schur functions we have $\omega s_\lambda[X] = s_{\lambda'}[X]$.

Finally for a symmetric function $f$ homogeneous of degree $d$,
we define 
$$
 f[-X] := (-1)^d \omega f[X].
$$

As an application, we can write the generating function of 
the elementary symmetric functions $e_r[Y-X]$ for two alphabets $X=\{X_i\}$ and
$Y=\{Y_j\}$.
\begin{align}\label{eq:eXY}
 \sum_{r \ge 0} u^r e_r[Y-X] = \dfrac{\prod_j(1+ u Y_j)}{\prod_i(1+u X_i)}.
\end{align}

\begin{lem}\label{lem:dq}
For a partition $\mu$ and $r \in \bb{Z}_{\ge 0}$, we have 
\begin{align*}
(1-t)\sum_{\substack{\lambda\, \vdash |\mu|+1 \\ \lambda \supset \mu} } 
  q^{j(s_{\lambda/\mu})-1} d_{\lambda/\mu}(q,t) B_{s_{\lambda/\mu}}^r = 
 e_r[(1-q)(1-t^{-1}) B_\mu -1].
\end{align*}
In particular, setting $r=0$, we have \eqref{eq:dq0}.
\end{lem}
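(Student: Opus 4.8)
The plan is to package the statement over all $r\ge 0$ into a single rational-function identity in an auxiliary variable $u$, compute the right-hand side in closed form by a plethystic exponential, and then prove the resulting identity by a partial-fraction (residue) analysis over the addable corners of $\mu$.

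First I would multiply both sides by $u^r$ and sum over $r\ge 0$. On the left, $\sum_{r\ge0}u^r B_{s_{\lambda/\mu}}^r=(1-uB_{s_{\lambda/\mu}})^{-1}$, so the left-hand side becomes $(1-t)\sum_{\lambda}q^{j(s_{\lambda/\mu})-1}d_{\lambda/\mu}(q,t)(1-uB_{s_{\lambda/\mu}})^{-1}$, a sum of simple fractions whose poles sit at $u=B_s^{-1}$, one for each addable corner $s$ of $\mu$. For the right-hand side I would use the plethystic exponential (generalizing \eqref{eq:eXY}), reading $(1-q)(1-t^{-1})$ as a virtual alphabet so that $p_n[(1-q)(1-t^{-1})B_\mu-1]=(1-q^n)(1-t^{-n})\sum_{s\in\mu}B_s^n-1$. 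Exponentiating gives
\begin{align*}
\sum_{r\ge0}u^r e_r\bigl[(1-q)(1-t^{-1})B_\mu-1\bigr]
=\frac{1}{1+u}\prod_{s\in\mu}\frac{(1+uB_s)(1+uqt^{-1}B_s)}{(1+uqB_s)(1+ut^{-1}B_s)}.
\end{align*}
Setting $u=0$ recovers the case $r=0$, i.e.\ \eqref{eq:dq0}, and the whole Lemma is equivalent to the equality of these two rational functions of $u$.

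The crucial structural point is that the product on the right telescopes. Along each row of $\mu$ the numbers $B_s$ run through a geometric progression of ratio $q$, so the factor $(1+uqB_s)$ at a box cancels the factor $(1+uB_{s'})$ at its right neighbour, and likewise $(1+uqt^{-1}B_s)$ cancels $(1+ut^{-1}B_{s'})$; an analogous cancellation across consecutive rows of equal length removes all remaining factors except those attached to the rim of $\mu$. After this collapse the right-hand side becomes a ratio of linear factors indexed only by the (addable and removable) corners of $\mu$, and in particular its poles occur exactly at $u=B_a^{-1}$ for the addable corners $a$, matching the pole set of the left-hand side; both sides are proper rational functions vanishing at $u=\infty$.

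It therefore remains to match the two sides pole by pole, and this is the step I expect to be the main obstacle. I would compute the residue of the telescoped product at each addable-corner pole $u=B_{s_{\lambda/\mu}}^{-1}$ and check that it reproduces $(1-t)q^{j(s_{\lambda/\mu})-1}d_{\lambda/\mu}(q,t)$, where $d_{\lambda/\mu}$ is given explicitly by the single-box Pieri coefficient \eqref{eq:Pieri:d}, \eqref{eq:psi'}. This amounts to a bookkeeping of the arm/leg factors in $\psi'_{\lambda/\mu}$ and $c_\mu/c'_\lambda$ along the row and column of the added box, organized by the interlacing of the addable and removable corners; the factor $q^{j(s_{\lambda/\mu})-1}=q^{\,n(\lambda')-n(\mu')}$ (see \eqref{eq:par:n}) enters as the increment of the $q$-content. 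Once the residues agree at every pole and the two proper rational functions share the value \eqref{eq:dq0} at $u=0$, they coincide, which proves the Lemma and in particular yields \eqref{eq:dq0}.
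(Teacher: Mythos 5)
Your strategy is in substance the paper's own: pass to the generating function in $u$, recognize the right-hand side as a rational function whose denominator is indexed by the addable corners of $\mu$ (your ``telescoping'' of the plethystic product is exactly the Garsia--Tesler change of variables $X_k=B_{s_k}$, $Y_k=q^{\beta_k}t^{-\alpha_{k+1}}$ of the paper's proof, run in the opposite direction), and then match residues against the one-box Pieri coefficients. The residue computation that you defer as ``the main obstacle'' is precisely the content of \eqref{eq:qdlem}, and it does go through; so no idea is missing there, only its execution.

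There is, however, a concrete flaw in how you set up the comparison. With the convention \eqref{eq:eXY}, your (correct) plethystic formula
\begin{align*}
\sum_{r\ge0}u^{r}\,e_{r}\bigl[(1-q)(1-t^{-1})B_\mu-1\bigr]
=\frac{1}{1+u}\prod_{s\in\mu}\frac{(1+uB_s)(1+uqt^{-1}B_s)}{(1+uqB_s)(1+ut^{-1}B_s)}
\end{align*}
telescopes to $\prod_k(1+uY_k)\big/\prod_k(1+uX_k)$, whose poles sit at $u=-X_k^{-1}=-B_a^{-1}$, not at $u=B_a^{-1}$ as you assert; the left-hand side $\sum_\lambda(1-t)\,q^{j(s_{\lambda/\mu})-1}d_{\lambda/\mu}(q,t)\,(1-uB_{s_{\lambda/\mu}})^{-1}$ has its poles at $u=+B_a^{-1}$. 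The two rational functions you propose to identify therefore have disjoint pole sets and cannot be equal: one must compare the left-hand side with $\sum_r(-u)^r e_r[(1-q)(1-t^{-1})B_\mu-1]$ (this is the function $\widetilde{E}_\mu[u]$ of the remark following the lemma, and is what \eqref{eq:qdlem2} actually produces), which changes the claimed identity by a factor $(-1)^r$. The discrepancy is visible already for $\mu=\emptyset$: there $B_{s_{\lambda/\mu}}=1$, so the left-hand side is independent of $r$ and equals $1$ by \eqref{eq:dq0}, whereas $e_r[-1]=(-1)^r h_r[1]=(-1)^r$. So the identity as you (and, admittedly, the lemma as printed) state it holds only for even $r$ --- in particular for $r=0$, the only case used in the proof of \eqref{eq:part2}. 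Once this sign is tracked your argument closes up and reproduces the paper's proof.
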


\begin{proof}
Let $m$ be the number of corners of the Young diagram $\mu$.
Mimicking the calculation in \cite[Theorem 2.2]{GT}, 
we will introduce news variables $X_k$ and $Y_k$ 
(analogue of ``Garsia and Tesler's change of variables" in \cite[Appendix A]{SV}). 

We label the attachable boxes of $\mu$ by 
$s_1,s_2,\ldots,s_{m+1}$ from right to left,
and set $\alpha_k := i(s_k)-1$ and $\beta_k := j(s_k)-1$.
Introduce new variables $X_k$  and $Y_k$ by 
\begin{align}\label{eq:XY}
 X_k := q^{\beta_k} t^{-\alpha_k} = B_{s_k} \ (1\le k \le m+1),\qquad
 Y_k := q^{\beta_k} t^{-\alpha_{k+1}}       \ (1\le k \le m).
\end{align}
We show an example in Figure \ref{fig:2}.
In this case $\mu=(8,6,4,4,1)$, $m=4$ and
\begin{align*}
\begin{array}{lllll}
X_1=q^8,        &X_2=q^6 t^{-1}, &X_3=q^4 t^{-2}, &X_4=q^1t^{-4}, &X_5=t^{-5},\\
Y_1=q^8 t^{-1}, &Y_2=q^6 t^{-2}, &Y_3=q^4 t^{-4}, &Y_4=q^1t^{-5}.
\end{array}
\end{align*}

\begin{figure}[htbp]
\centering
{\unitlength 0.1in%
\begin{picture}( 18.0000, 12.0000)(  4.0000,-16.0000)%
%
\special{pn 8}%
\special{pa 400 600}%
\special{pa 400 1600}%
\special{fp}%
\special{pa 600 1600}%
\special{pa 600 600}%
\special{fp}%
\special{pa 800 800}%
\special{pa 800 1600}%
\special{fp}%
\special{pa 1000 1600}%
\special{pa 1000 800}%
\special{fp}%
\special{pa 1200 800}%
\special{pa 1200 1600}%
\special{fp}%
\special{pa 1400 1200}%
\special{pa 1400 1600}%
\special{fp}%
\special{pa 1600 1200}%
\special{pa 1600 1600}%
\special{fp}%
\special{pa 1800 1400}%
\special{pa 1800 1600}%
\special{fp}%
\special{pa 2000 1400}%
\special{pa 2000 1600}%
\special{fp}%
\special{pa 400 1600}%
\special{pa 2000 1600}%
\special{fp}%
\special{pa 2000 1400}%
\special{pa 400 1400}%
\special{fp}%
\special{pa 400 1200}%
\special{pa 1600 1200}%
\special{fp}%
\special{pa 1200 1000}%
\special{pa 400 1000}%
\special{fp}%
\special{pa 400 800}%
\special{pa 1200 800}%
\special{fp}%
\special{pa 400 600}%
\special{pa 600 600}%
\special{fp}%
%
\special{pn 8}%
\special{pa 400 400}%
\special{pa 600 400}%
\special{pa 600 600}%
\special{pa 400 600}%
\special{pa 400 400}%
\special{dt 0.045}%
%
\special{pn 8}%
\special{pa 600 600}%
\special{pa 800 600}%
\special{pa 800 800}%
\special{pa 600 800}%
\special{pa 600 600}%
\special{dt 0.045}%
%
\special{pn 8}%
\special{pa 1200 1000}%
\special{pa 1400 1000}%
\special{pa 1400 1200}%
\special{pa 1200 1200}%
\special{pa 1200 1000}%
\special{dt 0.045}%
%
\special{pn 8}%
\special{pa 1600 1200}%
\special{pa 1800 1200}%
\special{pa 1800 1400}%
\special{pa 1600 1400}%
\special{pa 1600 1200}%
\special{dt 0.045}%
%
\special{pn 8}%
\special{pa 2000 1400}%
\special{pa 2200 1400}%
\special{pa 2200 1600}%
\special{pa 2000 1600}%
\special{pa 2000 1400}%
\special{dt 0.045}%
\put(20.5000,-15.5000){\makebox(0,0)[lb]{$s_1$}}%
\put(16.5000,-13.5000){\makebox(0,0)[lb]{$s_2$}}%
\put(12.5000,-11.5000){\makebox(0,0)[lb]{$s_3$}}%
\put(6.5000,-7.5000){\makebox(0,0)[lb]{$s_4$}}%
\put(4.5000,-5.5000){\makebox(0,0)[lb]{$s_5$}}%
\end{picture}}%
\caption{Boxes $s_k$ for $\mu=(8,6,4,4,1)$}
\label{fig:2}
\end{figure}
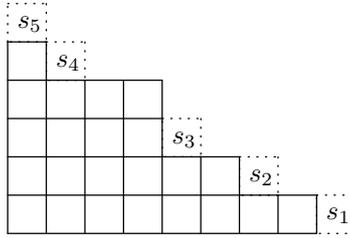

Let $\lambda$ be a partition attaching $s_k$ to $\mu$ 
(so that $s_k = s_{\lambda/\mu}$).
By the definition \eqref{eq:Pieri:d} of $d_{\lambda/\mu}(q,t)$, 
we have
\begin{align*}
d_{\lambda/\mu}(q,t) 
&= \dfrac{1}{c_\lambda(s_k;q.t)} 
   \prod_{s \in C_{\lambda/\mu}} \dfrac{c'_\mu(s;q,t)}{c'_\lambda(s;q,t)}
   \prod_{s \in R_{\lambda/\mu}} \dfrac{c_\mu(s;q,t)}{c_\lambda(s;q,t)},
\end{align*}
where $C_{\lambda/\mu}$ and $R_{\lambda/\mu}$ were explained
at the definition \eqref{eq:psi'} of $\psi'_{\lambda/\mu}(q,t)$.
Also see \eqref{eq:cc's} for the definition of $c_\mu(s;q,t)$ and $c'_\mu(s;q,t)$.
Using the variables \eqref{eq:XY} we have
\begin{align*}
\prod_{s \in C_{\lambda/\mu}} \dfrac{c'_\mu(s;q,t)}{c'_\lambda(s;q,t)}
= \prod_{n=1}^{k-1} \dfrac{1-Y_n/X_k}{1-X_n/X_k},\qquad 
\prod_{s \in R_{\lambda/\mu}} \dfrac{c_\mu(s;q,t)}{c_\lambda(s;q,t)}
= \prod_{n=k}^{m} \dfrac{1-X_k/Y_n}{1-X_k/X_{k+1}}.
\end{align*}
We also have 
\begin{align*}
c_\lambda(s_k;q.t) = 1-t,\qquad
\prod_{n=k}^{m}\dfrac{X_{n+1}}{Y_n} = q^{-\beta_k}.
\end{align*}
Thus
\begin{align}\label{eq:qdlem}
(1-t) q^{j(s_{\lambda/\mu})-1} d_{\lambda/\mu}(q,t) 
 = \prod_{n=1}^{k-1} \dfrac{X_k-Y_n}{X_k-X_n}
   \prod_{n=k}^{m}   \dfrac{X_k-Y_k}{X_k-X_{k+1}}
 = \dfrac{ \prod_{n=1,\ n\neq k}^m (1-Y_n/X_k)}{\prod_{n=1,\ n\neq k}^{m+1}(1-X_n/X_k)}.
\end{align}

On the other hand, considering the generating function \eqref{eq:eXY} 
for $X=\{X_n \mid 1\le n \le m+1\}$ and $Y = \{Y_n \mid 1 \le n \le m \}$, 
we have
\begin{align*}
\sum_{r=0}^\infty u^r e_r[Y-X] 
&= \dfrac{\prod_{n=1}^{m}(1+ u Y_n)}{\prod_{n=1}^{m+1}(1+u X_n)}
 = \sum_{n=1}^{n+1}\dfrac{1}{1+u X_k}
    \dfrac{ \prod_{n=1,\ n\neq k}^m (1-Y_n/X_k)}{\prod_{n=1,\ n\neq k}^{m+1}(1-X_n/X_k)}.
\end{align*}
Then from \eqref{eq:qdlem} we have
\begin{align}\label{eq:qdlem2}
\sum_{r=0}^\infty u^r e_r[Y-X] 
= \sum_{n=1}^{n+1}(-u)^r B_{s_{\lambda/\mu}}^r (1-t) q^{j(s_{\lambda/\mu})-1} d_{\lambda/\mu}(q,t).
\end{align}
Now a direct calculation gives
\begin{align*}
\sum_{n=1}^{m} Y_n - \sum_{n=1}^{m+1}X_n  = (1-q)(1-t^{-1}) e_1[B_\mu]-1.
\end{align*}
Taking the coefficient of $u^r$ in \eqref{eq:qdlem2}
gives the consequence.
\end{proof}

\begin{rmk}
\begin{enumerate}
\item
In \cite[\S2]{GT} and \cite[Appendix A]{SV},
a similar formula was given.
Let us describe it in our notation.
For a fixed partition $\lambda$ we have 
\begin{align*}
(1-q)\sum_{\substack{\mu\, \vdash |\lambda|-1 \\ \mu \subset \lambda} } 
  t^{1-i(s_{\lambda/\mu})} c_{\lambda/\mu}(q,t) B_{s_{\lambda/\mu}}^k = 
 h_{k+1}[(1-q)(1-t^{-1}) B_\lambda -t/q].
\end{align*}
for $k \ge 1$.
For the cases $k=\pm1$ one can also show
\begin{align*}
\sum_{\substack{\mu\, \vdash |\lambda|-1 \\ \mu \subset \lambda} } 
 t^{1-i(s_{\lambda/\mu})} c_{\lambda/\mu}(q,t) B_{s_{\lambda/\mu}}^{\pm 1} = 
 (1-t) e_{\pm1}[B_\lambda].
\end{align*}
Here we used the symbol
$e_{-1}[X]:=\sum_i X_i^{-1}$.
The formula for $k=-1$ seems to be a new one.

\item
We also have a ``higher-order analogue" of Lemma \ref{lem:dq}.
Instead of the case $|\lambda| = |\mu| +1$,
we now consider the case $|\lambda| = |\mu| +d$ for any $d\ge 1$.
The result is given in a generating function form.
For a fixed partition $\mu$, we have 
\begin{align*}
 \sum_{\substack{\lambda\, \vdash |\mu|+d \\ \lambda \supset \mu} } 
  q^{n(\lambda')-n(\mu')} d_{\lambda/\mu}(q,t) 
  \dfrac{1}{\prod_{i=1}^d(1-u B_{s_{\lambda/\mu}^{(i)}})} 
&=\dfrac{\prod_{k=1}^{m}(1-u Y_k)}{\prod_{k=1}^{m+d}(1-u X_k)}
 \dfrac{1}{\prod_{k=1}^d(1-t^k)}
\\
&=\prod_{k=0}^{d-1} \dfrac{\widetilde{E}_\mu[u/t^k]}{1-t^{k+1}}.
\end{align*}
In the first part,
$\{ s_{\lambda/\mu}^{(i)} \mid i=1,\ldots, d \}$ is the boxes 
consisting of the skew Young diagram $\lambda/\mu$.
For defining the variables $X_k$, $Y_k$ in the second part,
let us label from left to right and from top to bottom
by $s_1,s_2,\ldots,s_{m+d}$ the attachable boxes to $\mu$ 
so that $\lambda/\mu$ forms a vertical strip.
Here $m$ is some non-negative integer.
Then define 
$$
 X_k := q^{j(s_k)-1}t^{1-i(s_k)} = B_{s_k}\ (1\le k \le m+d),\qquad
 Y_k := q^{j(s_{k+d})-1}t^{1-i(s_k)}      \ (1\le k \le m).
$$
We show an example for $\mu=(6,6,4,3,3,1,1,1)$ and $d=3$ in Figure \ref{fig:3}.

\begin{figure}[htbp]
\centering
{\unitlength 0.1in%
\begin{picture}( 14.0000, 22.0000)(  4.0000,-24.0000)%
\special{pn 8}%
\special{pa 400 800}%
\special{pa 400 2400}%
\special{fp}%
\special{pa 600 1000}%
\special{pa 600 2400}%
\special{fp}%
\special{pa 600 800}%
\special{pa 600 1000}%
\special{fp}%
\special{pa 800 1400}%
\special{pa 800 2400}%
\special{fp}%
\special{pa 1000 1400}%
\special{pa 1000 2400}%
\special{fp}%
\special{pa 1200 1800}%
\special{pa 1200 2400}%
\special{fp}%
\special{pa 1400 2000}%
\special{pa 1400 2400}%
\special{fp}%
\special{pa 1600 2000}%
\special{pa 1600 2400}%
\special{fp}%
\special{pa 400 2400}%
\special{pa 1600 2400}%
\special{fp}%
\special{pa 400 2200}%
\special{pa 1600 2200}%
\special{fp}%
\special{pa 1600 2000}%
\special{pa 400 2000}%
\special{fp}%
\special{pa 400 1800}%
\special{pa 1200 1800}%
\special{fp}%
\special{pa 1000 1600}%
\special{pa 400 1600}%
\special{fp}%
\special{pa 400 1400}%
\special{pa 1000 1400}%
\special{fp}%
\special{pa 400 1200}%
\special{pa 600 1200}%
\special{fp}%
\special{pa 600 1000}%
\special{pa 400 1000}%
\special{fp}%
\special{pa 400 800}%
\special{pa 600 800}%
\special{fp}%
\special{pn 8}%
\special{pa 400 200}%
\special{pa 590 200}%
\special{pa 590 400}%
\special{pa 400 400}%
\special{pa 400 200}%
\special{dt 0.045}%
\special{pn 8}%
\special{pa 400 400}%
\special{pa 600 400}%
\special{pa 600 600}%
\special{pa 400 600}%
\special{pa 400 400}%
\special{dt 0.045}%
\special{pn 8}%
\special{pa 400 600}%
\special{pa 600 600}%
\special{pa 600 800}%
\special{pa 400 800}%
\special{pa 400 600}%
\special{dt 0.045}%
\special{pn 8}%
\special{pa 600 800}%
\special{pa 800 800}%
\special{pa 800 1000}%
\special{pa 600 1000}%
\special{pa 600 800}%
\special{dt 0.045}%
\special{pn 8}%
\special{pa 600 1000}%
\special{pa 800 1000}%
\special{pa 800 1200}%
\special{pa 600 1200}%
\special{pa 600 1000}%
\special{dt 0.045}%
\special{pn 8}%
\special{pa 600 1200}%
\special{pa 800 1200}%
\special{pa 800 1400}%
\special{pa 600 1400}%
\special{pa 600 1200}%
\special{dt 0.045}%
\special{pn 8}%
\special{pa 1000 1400}%
\special{pa 1200 1400}%
\special{pa 1200 1600}%
\special{pa 1000 1600}%
\special{pa 1000 1400}%
\special{dt 0.045}%
\special{pn 8}%
\special{pa 1000 1600}%
\special{pa 1200 1600}%
\special{pa 1200 1800}%
\special{pa 1000 1800}%
\special{pa 1000 1600}%
\special{dt 0.045}%
\special{pn 8}%
\special{pa 1200 1800}%
\special{pa 1400 1800}%
\special{pa 1400 2000}%
\special{pa 1200 2000}%
\special{pa 1200 1800}%
\special{dt 0.045}%
\special{pn 8}%
\special{pa 1600 2000}%
\special{pa 1800 2000}%
\special{pa 1800 2200}%
\special{pa 1600 2200}%
\special{pa 1600 2000}%
\special{dt 0.045}%
\special{pn 8}%
\special{pa 1600 2200}%
\special{pa 1800 2200}%
\special{pa 1800 2400}%
\special{pa 1600 2400}%
\special{pa 1600 2200}%
\special{dt 0.045}%
\put(4.5000,-3.5000){\makebox(0,0)[lb]{$s_1$}}%
\put(4.5000,-5.5000){\makebox(0,0)[lb]{$s_2$}}%
\put(4.5000,-7.5000){\makebox(0,0)[lb]{$s_3$}}%
\put(6.5000,-9.5000){\makebox(0,0)[lb]{$s_4$}}%
\put(6.5000,-11.5000){\makebox(0,0)[lb]{$s_5$}}%
\put(6.5000,-13.5000){\makebox(0,0)[lb]{$s_6$}}%
\put(10.5000,-15.5000){\makebox(0,0)[lb]{$s_7$}}%
\put(10.5000,-17.5000){\makebox(0,0)[lb]{$s_8$}}%
\put(12.5000,-19.5000){\makebox(0,0)[lb]{$s_9$}}%
\put(16.2000,-21.5000){\makebox(0,0)[lb]{$s_{10}$}}%
\put(16.2000,-23.5000){\makebox(0,0)[lb]{$s_{11}$}}%
\end{picture}}%
\caption{Boxes $s_k$ for $\mu=(6,6,4,3,3,1,1,1)$ and $d=3$.}
\label{fig:3}
\end{figure}
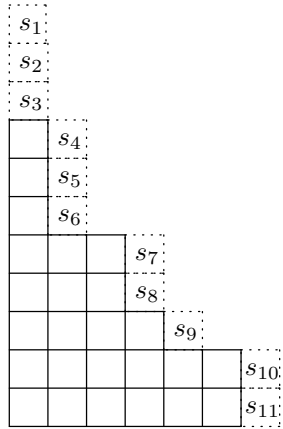

The function $\widetilde{E}_\mu[u]$ in the last part 
is defined to be 
$$
 \widetilde{E}_{\mu}[u] := \sum_{r=0}^\infty (-u)^r e_r[(1-q)(1-t^{-1}) B_\mu -1] 
 = \dfrac{1}{1-u}
   \dfrac{\prod_{s\in\lambda}(1-u B_s)(1-u B_s q/t)}{\prod_{s\in\lambda}(1-u B_s q)(1-u B_s/t)}.
$$
This formula seems to be a new one, so we mentioned it.
The proof is similar, so we omit it.
\end{enumerate}
\end{rmk}

\subsection{Part 2 of the proof}
\label{subsec:part2}

In this subsection we show the condition
\begin{align*}
D_d v_n^{(0)} = 0
\quad (d \ge2)
\end{align*}
in Corollary \ref{cor:whit:2}.
Let us recall some notations.
\begin{align}
\nonumber
v_m(0) &=  v_m^{(0)}  =
\sum_{\lambda \vdash m}  J_{\lambda}(q,t) 
 \dfrac{q^{n(\lambda')}}{\langle J_\lambda(q,t),J_\lambda(q,t) \rangle_{q,t}},
\\
\label{eq:Dz}
\quad
D(z) &= \sum_{d \ge 0}D_d z^{-d} = 
\exp\Bigl(\sum_{n>0} (1-q^{n})\partial_{p_n} z^{-n}\Bigr).
\end{align}

\begin{prop}\label{prop:v0}
For each $n \in \bb{Z}_{\ge0}$ we have 
\begin{align}\label{eq:whit:Dv}
 D_1 v_{n+1}^{(0)} = v_n^{(0)},\qquad 
 D_d v_n^{(0)} = 0\quad (d \ge 2).
\end{align}
\end{prop}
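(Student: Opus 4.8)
The plan is to package the two assertions of \eqref{eq:whit:Dv} into a single generating-function identity and then identify the full series $\sum_n v_n^{(0)}$ in closed form. Set $\Psi := \sum_{n\ge0}v_n^{(0)} = \sum_{\lambda} \frac{q^{n(\lambda')}}{\langle J_\lambda(q,t),J_\lambda(q,t)\rangle_{q,t}}J_\lambda(q,t)$, a formal sum of homogeneous pieces with $\deg v_n^{(0)}=n$. Since $D_0=1$ and $D_d$ lowers the symmetric-function degree by $d$, comparing the degree-$m$ homogeneous component of the two sides shows that \eqref{eq:whit:Dv}, ranging over all $n$, is equivalent to the single identity $D(z)\Psi = (1+z^{-1})\Psi$. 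Thus the whole proposition reduces to evaluating $D(z)$ on $\Psi$.

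First I would record how $D(z)$ acts plethystically. From \eqref{eq:Dz}, $D(z)=\exp(\sum_{n>0}(1-q^n)\partial_{p_n}z^{-n})$ is the exponential of a constant-coefficient derivation, hence it is the shift $p_n\mapsto p_n+(1-q^n)z^{-n}$; in plethystic notation $D(z)f[X]=f[X+(1-q)/z]$, where $(1-q)/z$ is the virtual alphabet with $p_n=(1-q^n)z^{-n}$. The clean way to exploit this is to have $\Psi$ in exponential form.

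The key step is therefore the closed form $\Psi = \exp\bigl(\sum_{n\ge1}\tfrac{(-1)^{n+1}}{n(1-q^n)}p_n\bigr)=\prod_{i}\prod_{k\ge0}(1+q^k x_i)$. By the Cauchy identity \eqref{eq:Cauchy} this is equivalent to the specialization $J_\lambda(q,t)[Y_0]=q^{n(\lambda')}$, where $Y_0$ denotes the specialization $p_r\mapsto(-1)^{r+1}/(1-t^r)$; indeed $\Pi(X,Y_0;q,t)=\exp\bigl(\sum_r\tfrac1r\tfrac{1-t^r}{1-q^r}p_r[X]\,p_r[Y_0]\bigr)$ collapses to the stated exponential once the factor $(1-t^r)$ cancels, so the $t$-dependence disappears. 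I expect this specialization to be the crux and the only non-formal step. I would prove it from \eqref{eq:spec_ve} together with Macdonald's duality $\omega_{q,t}J_\lambda(q,t)=J_{\lambda'}(t,q)$: applying \eqref{eq:spec_ve} to $J_{\lambda'}(t,q)$ with the parameters swapped and the transpose relabeling $i\leftrightarrow j$ of boxes gives $\ve_{u,q}J_{\lambda'}(t,q)=\prod_{s\in\lambda}(q^{j(s)-1}-u\,t^{i(s)-1})$, while tracing $\omega_{q,t}$ through $\ve_{u,q}$ shows the left-hand side equals $J_\lambda(q,t)$ evaluated at the specialization $p_r\mapsto(-1)^{r+1}(1-u^r)/(1-t^r)$. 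Setting $u=0$ recovers $Y_0$ on the left and $\prod_{s}q^{j(s)-1}=q^{n(\lambda')}$ on the right, which is exactly the claim.

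With $\Psi$ in exponential form the conclusion is immediate: the shift $p_n\mapsto p_n+(1-q^n)z^{-n}$ multiplies $\Psi$ by $\exp\bigl(\sum_n\tfrac{(-1)^{n+1}}{n(1-q^n)}(1-q^n)z^{-n}\bigr)=\exp\bigl(\sum_n\tfrac{(-1)^{n+1}}{n}z^{-n}\bigr)=1+z^{-1}$, so $D(z)\Psi=(1+z^{-1})\Psi$, which is \eqref{eq:whit:Dv}. The main obstacle is the specialization lemma $J_\lambda[Y_0]=q^{n(\lambda')}$; once it is in hand, both the identification of $\Psi$ and the evaluation of $D(z)$ are formal manipulations of plethystic exponentials, and the $t$-dependence visible in the individual coefficients cancels, consistently with the $t$-free product on the right.
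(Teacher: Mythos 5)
Your proof is correct and follows essentially the same route as the paper: both identify the generating series of the $v_n^{(0)}$ with a plethystic exponential by specializing one alphabet of the Cauchy kernel \eqref{eq:Cauchy} (your specialization $p_r\mapsto(-1)^{r+1}/(1-t^r)$ is exactly the paper's $\widetilde{\ve}_{0,w,t}$ at $w=-1$), and then observe that $D(z)$ multiplies that exponential by $1+z^{-1}$, which is the paper's commutation relation $D(z)v^{(0)}(w)=(1-w/z)v^{(0)}(w)D(z)$ evaluated at $w=-1$. Your derivation of $J_\lambda\mapsto q^{n(\lambda')}$ via $\omega_{q,t}$-duality and \eqref{eq:spec_ve} merely spells out the step the paper dispatches with ``similarly one can prove''.
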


Before starting the proof, let us introduce the notation
\begin{align*}
v^{(0)}(w):=\sum_{n=0}^\infty (-w)^{n} v_n^{(0)}.
\end{align*}
Then we have  

\begin{lem}
\begin{align}\label{eq:v0}
 v^{(0)}(w) = \exp\Bigl(-\sum_{n>0} \dfrac{1}{1-q^{n}}\dfrac{p_n}{n} w^{n}\Bigr)
\end{align}
\end{lem}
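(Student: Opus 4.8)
The plan is to obtain \eqref{eq:v0} directly from the Cauchy kernel \eqref{eq:Cauchy} by applying the principal specialization $\ve_{u,t}$ of \eqref{eq:spec_ve} to one of its two alphabets. The key observation is that the factor $q^{n(\lambda')}=\prod_{s\in\lambda}q^{j(s)-1}$ appearing in $v_n^{(0)}$ is, up to sign, the leading coefficient in the variable $u$ of the polynomial $\ve_{u,t}J_\lambda(q,t)=\prod_{s\in\lambda}(t^{i(s)-1}-q^{j(s)-1}u)$: indeed its $u^{|\lambda|}$-coefficient is $(-1)^{|\lambda|}\prod_{s\in\lambda}q^{j(s)-1}=(-1)^{|\lambda|}q^{n(\lambda')}$. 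So the task reduces to isolating that leading coefficient inside the specialized Cauchy identity.

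First I would apply $\ve_{u,t}$ to the alphabet $Y$ in \eqref{eq:Cauchy}. By \eqref{eq:spec_ve} the right-hand side becomes $\sum_\lambda \frac{J_\lambda[X;q,t]}{\langle J_\lambda(q,t),J_\lambda(q,t)\rangle_{q,t}}\prod_{s\in\lambda}(t^{i(s)-1}-q^{j(s)-1}u)$, while on the left, using $\ve_{u,t}p_n=\tfrac{1-u^n}{1-t^n}$, the kernel collapses to $\exp\bigl(\sum_{n>0}\tfrac{1-u^n}{n(1-q^n)}p_n[X]\bigr)$. Both sides are now series in $X$ whose degree-$m$ homogeneous part is a polynomial of degree exactly $m$ in $u$.

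Next I would compare the two sides degree by degree in $X$ and, within the degree-$m$ part, extract the coefficient of the top power $u^m$. On the right this picks out $(-1)^m\sum_{\lambda\vdash m}\frac{q^{n(\lambda')}}{\langle J_\lambda(q,t),J_\lambda(q,t)\rangle_{q,t}}J_\lambda[X]=(-1)^m v_m^{(0)}$, since $J_\lambda[X]$ is homogeneous of degree $m$ and the product over boxes contributes its leading $u$-coefficient. On the left, expanding the exponential in the power-sum basis, the maximal $u$-degree of the degree-$m$ part is $u^m$, attained only when each factor $(1-u^n)$ contributes its $-u^n$ term; a short bookkeeping then gives $\sum_{\mu\vdash m}\frac{(-1)^{\ell(\mu)}}{z_\mu\prod_n(1-q^n)^{m_n(\mu)}}p_\mu$. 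Equating, multiplying by $(-w)^m$, and summing over $m$ yields $v^{(0)}(w)=\sum_\mu\frac{(-1)^{\ell(\mu)}w^{|\mu|}}{z_\mu\prod_n(1-q^n)^{m_n(\mu)}}p_\mu$, which is precisely the power-sum expansion of $\exp\bigl(-\sum_{n>0}\tfrac{1}{1-q^n}\tfrac{p_n}{n}w^n\bigr)$, giving \eqref{eq:v0}.

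The main point to be careful about is the legitimacy of isolating this leading $u$-behaviour: one cannot naively let $u\to\infty$ inside a sum of symmetric functions of unbounded degree. The clean remedy is exactly the grading by degree in $X$ used above, where in each graded piece both sides are polynomials in $u$ of matching degree, so the top-coefficient extraction is unambiguous; equivalently one may first rescale $X\mapsto u^{-1}X$, after which the limit $u\to\infty$ becomes finite degreewise and reproduces the same identity (the general $w$ then following from the homogeneity of each $v_m^{(0)}$).
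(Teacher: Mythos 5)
Your proof is correct and follows essentially the same route as the paper: both specialize one alphabet of the Cauchy kernel \eqref{eq:Cauchy} via the principal specialization \eqref{eq:spec_ve} so that the factor $q^{n(\lambda')}$ emerges from $\prod_{s\in\lambda}(t^{i(s)-1}-q^{j(s)-1}u)$. The paper packages your ``rescale $X\mapsto u^{-1}X$ and let $u\to\infty$'' step into a modified specialization $\widetilde{\ve}_{u,w,t}\,p_r=w^r\frac{u^r-1}{1-t^r}$ evaluated at $u=0$, which is the same extraction of the leading $u$-coefficient that you carry out degree by degree.
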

\begin{proof}
Recall the Cauchy kernel \eqref{eq:Cauchy} of Macdonald symmetric functions: 
\begin{align}\label{eq:Cauchy:2}
\Pi(X,Y;q,t) := 
 \exp\Bigl(\sum_{n>0} \dfrac{1}{n}\dfrac{1-t^n}{1-q^n}p_n[X]p_n[Y]\Bigr)
 = \sum_{\lambda} \dfrac{J_\lambda[X;q,t]J_\lambda[Y;q,t]}{
    \langle J_\lambda(q,t),J_\lambda(q,t) \rangle_{q,t}}.
\end{align}
%
%
On this formula we will apply the specialization
$\widetilde{\ve}_{u,w,t}:\Lambda_{\bb{F}} \to \bb{F}[u,w]$ defined by
\begin{align*}
\widetilde{\ve}_{u,w,t} p_r := w^{r}\dfrac{u^r-1}{1-t^r} = -w^{r} \ve_{u,t} p_r .
\end{align*}
Then similarly as in \eqref{eq:spec_ve} for the specialization $\ve_{u,t}$, 
one can prove
\begin{align*}
\widetilde{\ve}_{u,w,t} J_\lambda(q,t)  = 
w^{|\lambda|} \prod_{s \in \lambda} (t^{i(s)-1} u - q^{j(s)-1})
\end{align*}

Now applying the homomorphism $\widetilde{\ve}_{0,w,t}:p_r\mapsto -w^{r}/(1-t^r)$ 
on the alphabet $Y$ 
in \eqref{eq:Cauchy} we have
\begin{align*}
 \exp\Bigl(-\sum_{n>0}\dfrac{1}{1-q^n}\dfrac{p_n}{n}w^{n}\Bigr)
 = \sum_{\lambda}(-w)^{|\lambda|} J_\lambda(q,t)
   \dfrac{\sum_{s\in\lambda}q^{j(s)-1}}{\langle J_\lambda(q,t),J_\lambda(q,t) \rangle_{q,t}}
 = v^{(0)}(w).
\end{align*}
\end{proof}

Now we turn to 
\begin{proof}[{Proof of Proposition \ref{prop:v0}}]
By the factorized formula \eqref{eq:v0} of $v^{(0)}(w)$, 
the definition \eqref{eq:Dz} of $D(z)$ 
and the Baker-Campbell-Hausdorff formula,
we have the relation of  operators acting on $\cal{F}$.
$$
 D(z) v^{(0)}(w) 
 = \exp\Bigl(-\sum_{n>0}\dfrac{1}{n}\dfrac{w^n}{z^n}\Bigr) v^{(0)}(w) D(z)
 = (1-w/z) v^{(0)}(w) D(z).  
$$
Taking the coefficient of $z^{d+1} w^{-n-1}$ with $d,n\in\bb{Z}_{\ge 0}$, 
we have
$$
D_{d+1} v^{(0)}_{n+1} =  v_{n+1}^{(0)} D_{d+1} + v_n^{(0)} D_d.
$$
Considering the action of this formula on the basis $1$ of the Fock space, we have
$$
D_1 v_{n+1}^{(0)} \cdot 1=v_n^{(0)} \cdot 1, \quad   D_d v_n^{(0)} \cdot 1=0\ (d\ge 2).
$$ 
These are the desired formulae.
\end{proof}

\begin{rmk}
\begin{enumerate}
\item
Similarly we have a ``Whittaker vector for the current $\eta(z)$".
Since 
$H[B_\lambda;u] = 1/\prod_{s\in\lambda}(1-u q^{j(s)-1}t^{1-i(s)})$,
it is natural to set 
\begin{equation}
\begin{split}
\label{eq:vinf}
v^{(\infty)}_m:=  
&\sum_{\lambda \,\vdash m}  J_{\lambda}(q,t) 
 \dfrac{q^{n(\lambda')}}{\langle J_\lambda(q,t),J_\lambda(q,t) \rangle_{q,t}}
 \dfrac{1}{\prod_{s\in\lambda} q^{j(s)-1}t^{1-i(s)} }
\\
=
&\sum_{\lambda \,\vdash m}  J_{\lambda}(q,t) 
 \dfrac{t^{n(\lambda)}}{\langle J_\lambda(q,t),J_\lambda(q,t) \rangle_{q,t}}.
\end{split}
\end{equation}
Then for each $n \in \bb{Z}_{\ge0}$ we have 
\begin{align}\label{eq:vinf_eta}
\eta_d v_{n+1}^{(\infty)} = 
 \begin{cases} v_n^{(\infty)} & (d=1) \\ 0 &(d \ge 2).\end{cases}
\end{align}

The proof of this formula is similar to that for $v^{(0)}_n$,
so we only give an outline.
Set 
$$
 v^{(\infty)}(w) := \sum_{n\ge 0}v_n^{(\infty)} w^{n} 
$$
Using the specialization $\ve_{0,t}$ of the Cauchy kernel, we can prove
$$
 v^{(\infty)} (w) = \exp\Bigl(\sum_{n>0} \dfrac{1}{1-q^{n}}\dfrac{p_n}{n} w^{n}\Bigr).
$$
Then the relation \eqref{eq:vinf_eta} follows from the following 
relation of operators acting on $\cal{F}$.
$$
  \eta(z) v^{(\infty)}(w) = (1-w/z)  v^{(\infty)} (w) \eta(z). 
$$
\item
Using the operator $\nabla$ \eqref{eq:nabla}, we have  
\begin{align}\label{eq:vinf-v0}
 v_n^{(\infty)} = \nabla^{-1} v_n^{(0)}.
\end{align} 
Then the relation \eqref{eq:vinf_eta} can be shown 
by \eqref{eq:eta-nabla} and the Whittaker relation \eqref{eq:whit:Dv} for $v_n^{(0)}$.
\end{enumerate}
\end{rmk}

\subsection{Part 3 of the proof}
\label{subsec:part3}

\subsubsection{Preparations}

Before dealing with the remaining relations 
\begin{align*}
%
&\bigl((q/t)^{d-1} \eta_d \Delta_{h_{r-1}}+ D_d \Delta_{h_{r}}\bigr) v_{n}^{(0)} = 
 0 \quad  (d \ge 2,\ r \ge 0)
\end{align*}
in Corollary \ref{cor:whit:2},
we prepare some lemmas.
Let us recall several notations.
\begin{align*}
&D(z) = \sum_{d \in \bb{Z}_{\ge0}}D_d z^{-d} = 
 \exp\Bigl(\sum_{n>0} (1-q^{n})\partial_{p_n} z^{-n}\Bigr),
\\
&\eta(z) =
 \exp\Bigl( \sum_{n>0} (1-t^{-n})\dfrac{p_n}{n}  z^{n} \Bigr)
 \exp\Bigl(-\sum_{n>0} (1-q^{n})\partial_{p_n} z^{-n}  \Bigr),
\\
\nonumber
&v^{(0)}(w) = 
  \sum_{n=0}^\infty (-w)^n v^{(0)}_n 
= \exp\Bigl(-\sum_{n>0} \dfrac{1}{1-q^{n}}\dfrac{p_n}{n} w^{-n}\Bigr).
\end{align*}

\begin{lem}\label{lem:D_eta_v}
We have the following commutation relations.
\begin{align}
\label{eq:Deta}
&D(z) \eta(w) = f(w/z)\eta(w)D(z),\quad
 f(x) := \dfrac{(1-q x)(1-x/t)}{(1-x)(1-x q/t )},  
\\
\label{eq:etaeta}
&\eta(z) \eta(w) = g(w/z)\nord{\eta(z)\eta(w)},\quad
 g(x) := \dfrac{(1-x)(1-q x/t)}{(1-q x)(1-x /t)},  
\\
\label{eq:Dv0}
& D(z) v^{(0)}(w) =  (1-w/z) v^{(0)}(w) D(z) ,
\\
\label{eq:etav0}
& \eta(z) v^{(0)}(w) =  (1-w/z)^{-1}  v^{(0)}(w) \eta(z).
\end{align}
Both sides are considered as formal series of $w/z$ 
valued in operators acting on $\cal{F}$.
\end{lem}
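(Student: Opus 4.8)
The plan is to derive all four identities from a single mechanism. Each of $D(z)$, $\eta(w)$ and $v^{(0)}(w)$ is the exponential of a linear combination of the operators $p_n$ (acting by multiplication) and $\partial_{p_n}$ (acting by differentiation), and the only nontrivial bracket among these is the scalar $[\partial_{p_n}, p_m] = \delta_{n,m}$. Since this bracket is central, whenever $A$ and $B$ are two such linear combinations one has the Baker--Campbell--Hausdorff identity $e^A e^B = e^B e^A e^{[A,B]}$ with $[A,B]$ a scalar formal series in $w/z$. Thus each commutation relation reduces to evaluating one such series and recognizing its exponential as a product of factors $(1-a\,w/z)$.

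First I would split each operator into a creation block (the exponential of its $p_n$-part) and an annihilation block (the exponential of its $\partial_{p_n}$-part). Write $\eta(z) = e^{A_z}e^{B_z}$ with $A_z := \sum_{n>0}(1-t^{-n})p_n z^n/n$ and $B_z := -\sum_{n>0}(1-q^n)\partial_{p_n} z^{-n}$, and analogously $A_w, B_w$ for $\eta(w)$; write $D(z) = e^{B_D}$ with $B_D := \sum_{n>0}(1-q^n)\partial_{p_n} z^{-n}$; and write $v^{(0)}(w) = e^{C}$ with $C := -\sum_{n>0}\frac{1}{1-q^n}\frac{p_n}{n}w^n$. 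Creation--creation and annihilation--annihilation blocks commute outright, so in each product the only rearrangement producing a scalar is moving one annihilation block past one creation block: $B_D$ past $A_w$ for \eqref{eq:Deta}, $B_z$ past $A_w$ for \eqref{eq:etaeta}, $B_D$ past $C$ for \eqref{eq:Dv0}, and $B_z$ past $C$ for \eqref{eq:etav0}.

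Carrying this out gives four brackets, each of the shape $\sum_{n>0}c_n(w/z)^n/n$. For \eqref{eq:Deta} the bracket $[B_D, A_w] = \sum_{n>0}\frac{(1-q^n)(1-t^{-n})}{n}(w/z)^n$ exponentiates to $f(w/z)$, because $(1-q^n)(1-t^{-n}) = 1-q^n-t^{-n}+(q/t)^n$ is precisely the coefficient of $x^n/n$ in $\log f(x) = \log(1-qx)+\log(1-x/t)-\log(1-x)-\log(1-qx/t)$. For \eqref{eq:etaeta} the relevant bracket $[B_z, A_w]$ is the negative of this, so it exponentiates to $f(w/z)^{-1} = g(w/z)$, while the creation-then-annihilation ordering that survives is by definition $\nord{\eta(z)\eta(w)}$. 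For \eqref{eq:Dv0} and \eqref{eq:etav0} the factor $1/(1-q^n)$ in $C$ cancels the $(1-q^n)$ coming from $B_D$ or $B_z$, leaving $\mp\sum_{n>0}(w/z)^n/n$; the upper sign gives $\log(1-w/z)$ and the factor $(1-w/z)$ in \eqref{eq:Dv0}, the lower sign gives $-\log(1-w/z)$ and the factor $(1-w/z)^{-1}$ in \eqref{eq:etav0}.

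I expect no genuine obstacle here: the computations are routine once the splitting is fixed. The only points demanding care are the direction of each bracket---which is exactly what distinguishes $f$ from $g = f^{-1}$, and $(1-w/z)$ from its inverse---and the final bookkeeping of exponentials, where one uses that the scalar factor commutes with everything and that the two annihilation blocks (resp.\ the two creation blocks) can be freely merged. Convergence never enters, since all four identities are read as equalities of formal power series in $w/z$ with operator coefficients.
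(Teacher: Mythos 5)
Your proposal is correct and is exactly the standard Baker--Campbell--Hausdorff computation that the paper leaves implicit (it omits the proof of this lemma as routine); the four brackets you compute, the identification of $(1-q^n)(1-t^{-n})$ with the coefficients of $\log f$, the cancellation of $1/(1-q^n)$ against $(1-q^n)$ for the $v^{(0)}$ relations, and the sign bookkeeping distinguishing $f$ from $g=f^{-1}$ all check out. Nothing further is needed.
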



\begin{lem}\label{lem:expand} 
Consider $(1-w/z)^{-1}$ is considered as a formal series in $w/z$.
\begin{enumerate}
\item 
For a formal series $a(z)= \sum_{d\in \bb{Z} }a_{-d}z^d$,
the coefficient of  $z^0$ in $(1-w/z)^{-1} a(z)$ is 
$\sum_{d\ge0}a_{-d}w^d$.
\item
For a formal series $a_{-}(z)= \sum_{d\ge0}a_{-d}z^d$,
we have 
$$
 (1-w/z)^{-1} a_{-}(z) = (1-w/z)^{-1}a_{-}(w) + O(z).
$$
\item
For a formal series $a_{+}(z)= \sum_{d\ge0}a_{+d}z^{-d}$,
we have 
$$
 (1-w/z)^{-1} a_{+}(z) = a_0 + O(z).
$$
\end{enumerate}
\end{lem}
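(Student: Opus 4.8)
The plan is to reduce all three assertions to the single geometric expansion
$(1-w/z)^{-1} = \sum_{m\ge 0}(w/z)^m = \sum_{m\ge 0}w^m z^{-m}$,
which is exactly the prescribed reading of the symbol (a formal series in $w/z$, hence carrying only non-positive powers of $z$). With this in hand, each statement becomes a matter of multiplying out and collecting a single power of $z$, so the lemma is pure bookkeeping with no structural input; in particular no use is made of Macdonald theory or of the currents $\eta$ and $D$. It is convenient to read each equality as an identity of formal series in $z$ modulo the terms that do not contribute to the coefficient of $z^0$, since that coefficient is the operative content (the first part is stated precisely in those terms).

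First I would settle the first part. Writing $a(z)=\sum_{d}a_{-d}z^d$, the product is $\sum_{m\ge 0}\sum_{d}a_{-d}w^m z^{d-m}$, and for a fixed exponent $z^k$ the contributing pairs satisfy $d-m=k$ with $m\ge 0$; the resulting coefficient $\sum_{m\ge 0}a_{-(k+m)}w^m$ is a legitimate formal power series in $w$, because each monomial $w^m$ occurs exactly once. Taking $k=0$ forces $d=m\ge 0$ and gives $\sum_{m\ge 0}a_{-m}w^m=\sum_{d\ge 0}a_{-d}w^d$, which is the claim.

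Next I would read off the remaining two parts as the specializations of the first dictated by the support of $a$. For the second part, where $a_-(z)$ has only non-negative powers of $z$, I would compare the two sides degree by degree: the coefficient of $z^{-k}$ ($k\ge 0$) on the left is $\sum_{d\ge 0}a_{-d}w^{d+k}$, and this agrees term by term with the expansion of $(1-w/z)^{-1}a_-(w)$, where $a_-(w)=\sum_{d\ge 0}a_{-d}w^d$ is a scalar in $z$. Thus the two sides actually coincide in all non-positive powers of $z$, the leftover terms carrying strictly positive powers of $z$ and forming the $O(z)$ remainder; here the stronger identity holds. For the third part, where $a_+(z)$ has only non-positive powers of $z$, the formula from the first part shows that only $d=0$ survives in the coefficient of $z^0$, leaving $a_0$; the remaining contributions do not affect the constant term in $z$ and are absorbed into the remainder.

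The only point requiring care, and hence the ``hard'' step although it is entirely routine, is the bookkeeping of the remainders together with the verification that every coefficient extracted is a genuine power series in $w$ rather than an ill-defined infinite sum. This is guaranteed precisely because $(1-w/z)^{-1}$ contributes only non-positive powers of $z$, so a fixed power $z^k$ is fed by $w^m$ with $m$ ranging over a half-line, producing an honest element of the coefficient ring. With these finiteness checks in place, all three parts follow immediately from the geometric expansion, and the lemma serves as a purely formal preparation for extracting $z^0$-coefficients in Part 3 of the proof.
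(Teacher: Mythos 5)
Your proof is correct and is precisely the routine expansion of $(1-w/z)^{-1}=\sum_{m\ge0}w^m z^{-m}$ followed by coefficient bookkeeping that the paper has in mind --- the paper itself omits the argument as straightforward, so you have simply supplied the intended verification. One remark: in part (3) the remainder consists of strictly \emph{negative} powers of $z$ (there are no positive powers at all in $(1-w/z)^{-1}a_+(z)$), so your reading of the symbol $O(z)$ as ``terms not contributing to the constant coefficient'' is the right one for the way the lemma is used in \S\ref{subsec:part3}, even though it departs from the literal meaning of $O(z)$.
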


The proofs are straightforward, so we omit them.

\subsubsection{The case $r=1$}\label{sssec:r=1}

As a demonstration, let us show 
\begin{align}\label{eq:r=1}
&\bigl((q/t)^{d-1} \eta_d + D_d \Delta_{h_{1}}\bigr) v_{n}^{(0)} = 
 0 \quad  (d \ge 2)
\end{align}
Hereafter we will use  
$$
 \eta_{+}(z) := \sum_{d\ge0}\eta_d z^{-d},\qquad
 \eta_{-}(z) := \sum_{d\le0}\eta_d z^{-d}.
$$
We will also denote by the symbol $\equiv$ the equivalence of formal series 
up to $z^{-2}$ (ignoring $z^{-1},z^0,z^1,\ldots$ parts).
Thus the relation\eqref{eq:r=1} can be expressed as
\begin{align}\label{eq:r=1:2}
 \bigl(t/q \cdot \eta_{+}( z t/q) + D(z) \Delta_{h_{1}}\bigr) v^{(0)}(w) \equiv 0.
\end{align}

By \eqref{eq:Dv0} in Lemma \ref{lem:D_eta_v} we have
\begin{align*}
D(z) \eta(z_1) v^{(0)} (w) 
 = (1- w/z) (1-z_1/w)^{-1} f(z_1/z) v^{(0)}(w) \eta(z_1) D(z).
\end{align*}
Thus as formal series in $\cal{F}$ we have
\begin{align*}
D(z) \eta(z_1) v^{(0)} (w) \cdot 1_{\cal{F}}
 = (1- w/z) (1-z_1/w)^{-1} f(z_1/z) v^{(0)}(w) \eta_{-}(z_1) \cdot 1_{\cal{F}}.
\end{align*}
Taking coefficients of ${z_1}^0$ in both sides and using Lemma \ref{lem:expand}, we have
\begin{align*}
D(z) \, \eta_0  \, v^{(0)} (w) \cdot 1_{\cal{F}}
&= (1- w/z) f(w/z) \cdot v^{(0)}(w) \eta_{-}(w)  \cdot 1_{\cal{F}}
\\
&= \Bigl(-\dfrac{w}{z} + (q+t-1) t/q + \dfrac{(1-q)(1-t^{-1}) t /q}{1-q/t \cdot w/z} \Bigr) \cdot 
   v^{(0)}(w) \eta_{-}(w)  \cdot 1_{\cal{F}}.
\\
&\equiv (1-q)(1-t^{-1}) t /q \cdot (1-q/t \cdot w/z)^{-1} \cdot 
   v^{(0)}(w) \eta_{-}(w)  \cdot 1_{\cal{F}}.
\end{align*}
In the second line we used the formula
$$
(1-x) f(x ) 
 = -x + (q+t-1) t/q + \dfrac{(1-q)(1-t^{-1}) t/q}{1-x q/t}.
$$
Now recall that by \eqref{eq:eta0Delta} 
we have $\Delta_{h_1} = \Delta_{e_1} = (1-\eta_0)/\bigl((1-q) (1-t^{-1})\bigr)$.
Using \eqref{eq:Dv0} we have
\begin{align}
\label{eq:r=1:D}
D(z) \Delta_{h_1} v^{(0)} (w) \cdot 1_{\cal{F}}
 \equiv - t /q \cdot (1-q/t \cdot w/z)^{-1} \cdot 
   v^{(0)}(w) \eta_{-}(w)  \cdot 1_{\cal{F}}.
\end{align}

On the other hand, by \eqref{eq:etav0} in Lemma \ref{lem:D_eta_v} we have
\begin{align}
\nonumber
\eta_{+}(z t/q ) v^{(0)} (w) \cdot 1_{\cal{F}}
&= (1- q/t \cdot w/z)^{-1} v^{(0)}(w) \eta_{-}(z t/q)  \cdot 1_{\cal{F}}
\\
\label{eq:r=1:eta}
&= (1- q/t \cdot w/z)^{-1} v^{(0)}(w) \eta_{-}(w)  \cdot 1_{\cal{F}}.
\end{align}
In the last line we used Lemma \ref{lem:expand}.
Comparing \eqref{eq:r=1:D} and \eqref{eq:r=1:eta},
we have the desired formula \eqref{eq:r=1:2}.

\subsubsection{Calculation of $D(z) \Delta_{h_r} v^{0}(w)$}

We want to calculate $D(z) \Delta_{h_r} v^{0}(w)$,
The difficult point is that we don't have an explicit formula for $\Delta_{h_r}$.
However we may use the explicit formula \eqref{eq:E_r} for $\widehat{E}_r$ 
in Fact \ref{fct:E_r} and the translation formula 
between $e_r[s^\lambda]$ and $h_r[B_\lambda]$ in Lemma \ref{lem:B-s}.
Thus we start with $D(z) \widehat{E}_r v^{0}(w)$.

\begin{lem}\label{lem:DErv0}
For each $r \in \bb{Z}_{\ge 1}$ we have 
\begin{align*}
D(z) \widehat{E}_r v^{0}(w) \cdot 1_\cal{F}
= \dfrac{t^{-r(r+1)/2}}{(t^{-1};t^{-1})_r} 
  (1-w/z)
  \prod_{i=1}^{r-1} f(t^{-i} w/z) \cdot  
  v^{(0)}(w) \prod_{i=1}^{r-1} \eta_{-}(w/t^{i-1}) \cdot 1_\cal{F}
\end{align*}
as a Laurent formal series of $w/z$ valued in $\cal{F}$.
\end{lem}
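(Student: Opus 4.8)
The plan is to turn the left-hand side into a single iterated constant-term extraction by pushing $D(z)$ all the way to the right. Writing $\widehat{E}_r$ through its defining expression \eqref{eq:E_r}, I would pull the scalar $\dfrac{t^{-r(r+1)/2}}{(t^{-1};t^{-1})_r}$ and the symbol $[\ \cdots\ ]_1$ outside (they involve neither $z$ nor $w$), and commute $D(z)$ through the normal-ordered product using \eqref{eq:Deta}; since normal ordering does not change which tensor factors are creating and which are annihilating, this yields $\prod_{i=1}^r f(z_i/z)$. Commuting $D(z)$ once more past $v^{(0)}(w)$ by \eqref{eq:Dv0} produces the scalar $(1-w/z)$, and $D(z)\cdot 1_{\cal F}=1_{\cal F}$ removes $D(z)$ entirely. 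Moving the surviving $\eta$-factors past $v^{(0)}(w)$ with \eqref{eq:etav0} contributes $\prod_{i=1}^r(1-w/z_i)^{-1}$. What remains inside $[\ \cdots\ ]_1$ is the rational kernel $\prod_{i<j}\ve(z_j/z_i)\prod_i f(z_i/z)\prod_i(1-w/z_i)^{-1}$ times $v^{(0)}(w)$ times the purely creating content of $\nord{\eta(z_1)\cdots\eta(z_r)}\cdot 1_{\cal F}$, the annihilation tails acting as the identity on the vacuum.

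The heart of the proof is the iterated extraction of the constant term in $z_1,\dots,z_r$, which I would carry out one variable at a time starting from the highest index. For the current variable $z_i$ the only negative powers come from $(1-w/z_i)^{-1}$, whereas $f(z_i/z)$, the kernel factors $\ve(z_j/z_i)$ and the creation exponential contribute only non-negative powers of $z_i$; Lemma \ref{lem:expand} then evaluates $z_i$ at the pole. The engine is the elementary identity $\bigl(1-w/z_i\bigr)^{-1}\ve(w/z_i)=\bigl(1-w/(t z_i)\bigr)^{-1}$: once the variables of larger index have been localized at $w,\,w/t,\dots$, the $\ve$-factors they leave behind cancel the zero of $(1-w/z_i)^{-1}$ and shift the surviving simple pole by one power of $t$, forcing $z_i$ onto the next point of the geometric chain $w\,t^{-k}$. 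Along the way the operator product $\eta(z)\eta(w)=g(w/z)\nord{\eta(z)\eta(w)}$ of \eqref{eq:etaeta} lets me trade the normal-ordered product for an ordinary product of currents, the factors $\prod_{i<j}g(z_j/z_i)$ cancelling against those generated when the localized currents are re-expanded. Collecting the values of $f$ and of the currents at the chain points, and re-expressing the resulting product of creation exponentials through the modes $\eta_-$, then assembles the factored right-hand side, the normalization $\dfrac{t^{-r(r+1)/2}}{(t^{-1};t^{-1})_r}$ being simply inherited from \eqref{eq:E_r}.

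The step I expect to be the main obstacle is precisely this residue bookkeeping: one must verify that the shuffle kernel $\ve$, the operator-product factor $g$ and the simple poles $(1-w/z_i)^{-1}$ conspire to localize the variables exactly on the $t$-chain and leave no spurious residues at the competing loci $z_j=z_i/t$, and that the final passage from the localized creation exponentials to the modes $\eta_-(w/t^{i-1})$ accounts correctly for the range of the products and for the surviving $f(t^{-i}w/z)$ factors. The cleanest organization is an induction on $r$: extract the outermost variable, check that the remaining integrand reproduces the $(r-1)$-case up to the single explicit $f$- and current-factor at the new chain point, and take the base case $r=1$ from the computation already performed in \S\ref{sssec:r=1}. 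Controlling the $\ve$-factors so that they supply the required pole shift at every step, rather than contributing extra residues, is the delicate point that the geometric specialization is designed to make work.
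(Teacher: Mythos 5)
Your proposal follows essentially the same route as the paper: commute $D(z)$ through the normal-ordered $\eta$-product via \eqref{eq:Deta} and past $v^{(0)}(w)$ via \eqref{eq:Dv0}, kill the annihilation tails on $1_{\cal F}$, and then extract constant terms one variable at a time from the highest index down, with the identity $(1-w/z_i)^{-1}\ve(w/z_i)=(1-t^{-1}w/z_i)^{-1}$ driving the localization onto the chain $z_i=w/t^{r-i}$ exactly as in the paper. The only inessential deviation is your invocation of the $g$-factors from \eqref{eq:etaeta}, which the paper avoids since acting on the vacuum already reduces the normal-ordered product to the creation parts $\eta_-(z_i)$.
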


\begin{proof}
By Fact \ref{fct:E_r} and Lemma \ref{lem:D_eta_v},
we have  
\begin{align*}
&D(z) \widehat{E}_r v^{(0)}(w) \cdot 1_\cal{F}
= \dfrac{t^{-r(r+1)/2}}{(t^{-1};t^{-1})_r} 
  D(z) 
  \Bigl[\prod_{1\le i<j \le r}\ve(z_j/z_i) \cdot 
        \nord{\prod_{i=1}^{r} \eta(z_i)} \Bigr]_1 
  v^{(0)}(w) \cdot 1_\cal{F}
\\
&= \dfrac{t^{-r(r+1)/2}}{(t^{-1};t^{-1})_r} 
  \Bigl[
   (1-w/z)\prod_{i=1}^r \dfrac{f(z_i/z)}{1-w/z_i}
   \prod_{1\le i<j \le r}\ve(z_j/z_i) \cdot 
   v^{(0)}(w) \nord{\prod_{i=1}^{r} \eta(z_i)} D(z) 
\Bigr]_1 \cdot 1_\cal{F}.
\\
&= \dfrac{t^{-r(r+1)/2}}{(t^{-1};t^{-1})_r} 
  \Bigl[
   (1-w/z)\prod_{i=1}^r \dfrac{f(z_i/z) }{1-w/z_i}
   \prod_{1\le i<j \le r}\ve(z_j/z_i) \cdot 
   v^{(0)}(w) \prod_{i=1}^{r} \eta_{-}(z_i) 
\Bigr]_1 \cdot 1_\cal{F}.
\end{align*}
Now we want to take the coefficient of ${z_r}^0$.
By Lemma \ref{lem:expand},
we need only to consider the residue at $z_r = w$.
Thus we have 
\begin{align*}
&D(z) \widehat{E}_r v^{(0)}(w) \cdot 1_{\cal{F}}
\\
&= \dfrac{t^{-r(r+1)/2}}{(t^{-1};t^{-1})_r} 
  \Bigl[
   (1-w/z)f(w/z) \prod_{i=1}^{r-1} \dfrac{ f(z_i/z) \ve(w/z_i)  }{ 1-w/z_i }
   \prod_{1\le i<j \le r-1}\ve(z_j/z_i) 
   \cdot v^{(0)}(w) \eta_{-}(w) \prod_{i=1}^{r-1} \eta_{-}(z_i)
\Bigr]_1 \cdot 1_{\cal{F}}
\\
&=\dfrac{t^{-r(r+1)/2}}{(t^{-1};t^{-1})_r} 
  \Bigl[
   (1-w/z)f(w/z) 
   \prod_{i=1}^{r-2} \dfrac{f(z_i/z) \ve(w/z_i) \ve(z_{r-1}/z_i)}{1-w/z_i} 
   \prod_{1\le i<j \le r-2}\ve(z_j/z_i) \cdot 
\\
&\phantom{= \dfrac{t^{-r(r+1)/2}}{(t^{-1};t^{-1})_r} \Bigl[(1} 
\cdot \dfrac{ \ve(w/z_{r-1}) f(z_{r-1}/w)}{1-w/z_{r-1}}  \cdot 
 v^{(0)}(w) \eta_{-}(w) \prod_{i=1}^{r-1} \eta_{-}(z_i) 
\Bigr]_1 \cdot 1_{\cal{F}}
\\
&=\dfrac{t^{-r(r+1)/2}}{(t^{-1};t^{-1})_r} 
  \Bigl[
   (1-w/z)f(w/z) 
   \prod_{i=1}^{r-2} \dfrac{ f(z_i/z) \ve(z_{r-1}/z_i) }{ 1-t^{-1} w/z_i }
   \prod_{1\le i<j \le r-2}\ve(z_j/z_i) \cdot 
\\
&\phantom{= \dfrac{t^{-r(r+1)/2}}{(t^{-1};t^{-1})_r} \Bigl[(1} 
\cdot \dfrac{ f(z_{r-1}/z) }{ 1-t^{-1} w/z_{r-1} }  \cdot 
 v^{(0)}(w) \eta_{-}(w) \prod_{i=1}^{r-1} \eta_{-}(z_i)
\Bigr]_1 \cdot 1_\cal{F}.
\end{align*}
Then we see that the coefficient of ${z_{r-1}}^0$,
is given by the residue at $z_{r-1} = w/t$.
Now we have
\begin{align*}
&D(z) \widehat{E}_r v^{(0)}(w) \cdot 1_\cal{F}
\\
&=\dfrac{t^{-r(r+1)/2}}{(t^{-1};t^{-1})_r} 
  \Bigl[
   (1-w/z)f(w/z) f(t^{-1} w/z) 
   \prod_{i=1}^{r-2} \dfrac{ f(z_i/z)  \ve(w/ t z_i)}{ 1-t^{-1} w/z_i }   
   \prod_{1\le i<j \le r-2}\ve(z_j/z_i) \cdot 
\\
&\phantom{= \dfrac{t^{-r(r+1)/2}}{(t^{-1};t^{-1})_r} \Bigl[(1} 
  \cdot 
 v^{(0)}(w) \eta_{-}(w) \eta_{-}(w/t) \prod_{i=1}^{r-2} \eta_{-}(z_i) 
\Bigr]_1 \cdot 1_\cal{F}
\\
&=\dfrac{t^{-r(r+1)/2}}{(t^{-1};t^{-1})_r} 
  \Bigl[
   (1-w/z)f(w/z) f(t^{-1} w/z) 
   \prod_{i=1}^{r-2} \dfrac{ f(z_i/z)  }{ 1-t^{-2} w/z_i }
   \prod_{1\le i<j \le r-2}\ve(z_j/z_i) \cdot 
\\
&\phantom{= \dfrac{t^{-r(r+1)/2}}{(t^{-1};t^{-1})_r} \Bigl[(1} 
  \cdot 
 v^{(0)}(w) \eta_{-}(w) \eta_{-}(w/t) \prod_{i=1}^{r-2} \eta_{-}(z_i)
\Bigr]_1 \cdot 1_\cal{F}.
\end{align*}
Repeating this process,
we see that it is enough to consider the residue 
at $z_i = w/t^{r-i}$  ($1 \le i \le r$),
and we get the consequence immediately.
\end{proof}

Next we want to compute $D \widehat{E}_{\lambda} v_0(w)$
with
$$
 \widehat{E}_\lambda :=  \widehat{E}_{\lambda_1} \widehat{E}_{\lambda_2} \cdots
 \widehat{E}_{\lambda_\ell}
$$ 
for an arbitrary  partition $\lambda$.
We need the following notions.

%

\begin{dfn}\label{dfn:wmu}
Let $\mu$ be a partition with length $\ell$.
\begin{enumerate}
\item
We define the set of variables $w^\mu = \{ w^{\mu}_s \mid s \in \mu \}$ by
$$
 w^{\mu}_s := w q^{i(s)-1} t^{l(s)}.
$$
\item
We set
\begin{align*}
\eta_{-}(w^{\mu}) := \prod_{s \in \mu} \eta_{-}(w^{\mu}_s),\qquad
f(w^{\mu}/z) := \prod_{s \in \mu} f(w^{\mu}_s/z).
\end{align*}
\end{enumerate}
\end{dfn}

If we need to specify the order in the variables $w^{\mu}$,
the boxes in the diagram of $\mu$ are counted from left to right and from top to bottom.
So we have 
\begin{align}\label{eq:wlam_ord}
 w^{\mu} 
  = (q^{\ell-1} w/t^{\mu_\ell-1},q^{\ell-1} w/t^{\mu_\ell-2},\ldots,q^{\ell-1} w, \ldots,
     q w/t^{\mu_2-1},\ldots, q w,w/t^{\mu_1-1},\ldots,w)
\end{align}
with $\ell := \ell(\mu)$.

\begin{prop}\label{prop:DElamv0}
For a partition $\lambda$ we have 
\begin{align*}
 D(z) \widehat{E}_{\lambda} v_0(w) \cdot 1_\cal{F}
= C_{\lambda}(t)
  \sum_{\mu \, \vdash |\lambda|} 
  (1-w/z) f(w^\mu/z) R_{\lambda,\mu}(q,t) \cdot v^{(0)}(w) \eta_{-}(w^\mu) 
  \cdot 1_\cal{F}
\end{align*}
with 
\begin{align}
\label{eq:Cl}
&C_{\lambda}(t) := 
\prod_{i=1}^{\ell(\lambda)}
  \dfrac{t^{-\lambda_i(\lambda_i+1)/2}}{(t^{-1};t^{-1})_{\lambda_i}},  
\end{align}
and
\begin{align}
\label{eq:Rlm}
&R_{\lambda,\mu}(q,t) := 
\Res_{(z_i) = w^\mu}\Bigl[
 \prod_{i=1}^{\ell(\lambda)} \prod_{1\le j<k \le \lambda_i} 
  \ve(z_{\lambda^{(i-1)}+k}/z_{\lambda^{(i-1)}+j})
 \prod_{1\le i<j\le \ell(\lambda)}\prod_{k=1}^{\lambda_i}\prod_{l=1}^{\lambda_j}
  g(z_{\lambda^{(j-1)}+l}/z_{\lambda^{(i-1)}+k}) \Bigr].
\end{align}
When taking the residue, the set $w^\lambda$ is ordered as \eqref{eq:wlam_ord}.
Also we used the symbol 
\begin{align}\label{eq:lambra}
\lambda^{(i)} := \sum_{k=1}^{i}\lambda_k \quad (1\le i \le \ell(\lambda)),\qquad
\lambda^{(0)} := 0.
\end{align}
\end{prop}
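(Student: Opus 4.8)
The plan is to generalize the single-row computation of Lemma \ref{lem:DErv0} to an arbitrary partition $\lambda$ by an iterated residue calculation, the genuinely new feature being the normal ordering of currents coming from different rows. First I would substitute the explicit expression \eqref{eq:E_r} of each factor $\widehat{E}_{\lambda_i}$ from Fact \ref{fct:E_r} into $\widehat{E}_\lambda = \widehat{E}_{\lambda_1}\cdots\widehat{E}_{\lambda_{\ell(\lambda)}}$. Collecting the scalar prefactors $t^{-\lambda_i(\lambda_i+1)/2}/(t^{-1};t^{-1})_{\lambda_i}$ produces exactly $C_\lambda(t)$ as in \eqref{eq:Cl}, and labelling the $|\lambda|$ integration variables by the boxes of $\lambda$ via the symbol $\lambda^{(i)}$ of \eqref{eq:lambra} leaves a product of $\ell(\lambda)$ normal-ordered blocks $\nord{\prod_{j=1}^{\lambda_i}\eta(z_{\lambda^{(i-1)}+j})}$, each weighted by its intra-row factors $\prod_{1\le j<k\le\lambda_i}\ve(z_{\lambda^{(i-1)}+k}/z_{\lambda^{(i-1)}+j})$, all enclosed in a constant-term bracket.

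The next step is to normal order the whole product of these blocks. By the exchange relation \eqref{eq:etaeta}, reordering a pair $\eta(z_a)\eta(z_b)$ into $\nord{\eta(z_a)\eta(z_b)}$ costs a factor $g(z_b/z_a)$, and performing this for every pair of currents sitting in two distinct rows $i<j$ produces precisely the cross-row product $\prod_{i<j}\prod_{k,l} g(z_{\lambda^{(j-1)}+l}/z_{\lambda^{(i-1)}+k})$ appearing in the integrand \eqref{eq:Rlm}; together with the intra-row $\ve$-factors the interaction part now matches $R_{\lambda,\mu}$ exactly. I would then push $D(z)$ to the right: commuting it past every $\eta(z_i)$ via \eqref{eq:Deta} yields $\prod_i f(z_i/z)$, commuting it past $v^{(0)}(w)$ via \eqref{eq:Dv0} yields $(1-w/z)$, and finally $D(z)\cdot 1_\cal{F}=1_\cal{F}$. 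Moving $v^{(0)}(w)$ to the left of the currents via \eqref{eq:etav0} produces the poles $\prod_i(1-w/z_i)^{-1}$, and since $\prod_i\eta(z_i)\cdot 1_\cal{F}=\prod_i\eta_{-}(z_i)\cdot 1_\cal{F}$ only the creation parts survive on the vacuum.

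It remains to extract the constant term in $z_1,\dots,z_{|\lambda|}$, which by Lemma \ref{lem:expand} reduces to a sum of iterated residues. Exactly as in the proof of Lemma \ref{lem:DErv0}, each residue in $z_i$ is forced to a pole of the form $w\cdot(\text{monomial in }q,t)$, a factor $\ve$ or $g$ shifting the location of the next pole; the essential difference is that with several rows the cascade branches, and each admissible branch fills in the boxes of some partition $\mu$ with $|\mu|=|\lambda|$, specializing $(z_i)$ to the alphabet $w^\mu$ of Definition \ref{dfn:wmu} ordered as in \eqref{eq:wlam_ord}. Summing over branches yields the sum over $\mu\vdash|\lambda|$, with $(1-w/z)\prod_i f(z_i/z)\to(1-w/z)f(w^\mu/z)$, $\prod_i\eta_{-}(z_i)\to\eta_{-}(w^\mu)$, and the residue of the $\ve$/$g$ part collected into $R_{\lambda,\mu}(q,t)$ of \eqref{eq:Rlm}.

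The hard part will be this last step: controlling which iterated residues are nonzero and proving that the admissible pole configurations are in bijection with partitions $\mu\vdash|\lambda|$, the variables landing precisely on $w^\mu$. One has to check that the poles contributed by $\prod_i(1-w/z_i)^{-1}$, after being repeatedly displaced by the $\ve$-numerators within a row and by the $g$-factors between rows, can only be consumed in an order that builds a genuine Young diagram, and that no spurious contribution survives the constant-term extraction. I expect this to be a careful but essentially combinatorial cascade argument modeled on the residue bookkeeping already carried out for Lemma \ref{lem:DErv0}; once the bijection and the specialization $z\to w^\mu$ are established, reading off $C_\lambda(t)$, the prefactor $(1-w/z)f(w^\mu/z)$ and the residue $R_{\lambda,\mu}(q,t)$ is immediate.
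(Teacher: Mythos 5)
Your proposal follows essentially the same route as the paper's proof: substitute \eqref{eq:E_r} for each $\widehat{E}_{\lambda_i}$ to collect $C_\lambda(t)$, commute $D(z)$ and $v^{(0)}(w)$ through via Lemma \ref{lem:D_eta_v} to produce $(1-w/z)\prod_i f(z_i/z)/(1-w/z_i)$ and the cross-row $g$-factors, then evaluate the constant term as a sum of iterated residues located at $(z_i)=w^\mu$. The paper in fact dispatches the step you flag as hard with a one-line assertion ("one can check that the poles of the factor in the bracket are only at $(z_i)=w^\mu$"), so your more explicit acknowledgment of the residue-cascade bookkeeping is, if anything, more careful than the original.
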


\begin{proof}
As in the proof of Lemma \ref{lem:DErv0},
we have 
\begin{align*}
& D(z) \widehat{E}_{\lambda} v_0(w) \cdot 1_\cal{F}
\\
&=\prod_{i=1}^{\ell(\lambda)}
  \dfrac{t^{-\lambda_i(\lambda_i+1)/2}}{(t^{-1};t^{-1})_{\lambda_i}} \cdot 
 (1-w/z)  \Bigl[
   \prod_{i=1}^{|\lambda|} \dfrac{f(z_i/z)}{ 1-w/z_i } 
   \prod_{i=1}^{\ell(\lambda)} \prod_{1\le j<k \le \lambda_i} 
    \ve(z_{\lambda^{(i-1)}+k}/z_{\lambda^{(i-1)}+j}) \cdot
\\
&\phantom{\prod_{i=1}^{\ell(\lambda)}
  \dfrac{t^{-\lambda_i(\lambda_i+1)/2}}{(t^{-1};t^{-1})_{\lambda_i}} \cdot 
 (1-w/z) \cdot \Bigl[\prod }
   \cdot \prod_{1\le i<j\le \ell(\lambda)}\prod_{k=1}^{\lambda_i}\prod_{l=1}^{\lambda_j}
    g(z_{\lambda^{(j-1)}+l}/z_{\lambda^{(i-1)}+k}) 
  \cdot v^{(0)}(w) \prod_{i=1}^{|\lambda|} \eta_{-}(z_i) 
  \Bigr]_1 \cdot 1_\cal{F}.
\end{align*}
One can check that the poles of the factor in the bracket $[\,\cdots ]_1$ 
are only at $(z_i) = w^{\mu}$ with $|\mu| = |\lambda|$.
Thus we have the consequence.
\end{proof}

For later use, we remark 
\begin{lem}\label{lem:fwpoles}
For a parition $\mu$,
the poles of $(1-w) \prod_{s \in \mu} f(w^{\mu}_s)$ are of order one 
and placed at
$$
 \{ w= t/q \cdot B_{s(\mu'/\nu')} \mid \nu \vdash |\mu|-1, \ \nu \subset \mu \}.
$$
Here $s(\lambda/\mu)$ denotes the (unique) box of the skew Young diagram $\lambda/\mu$.
The symbol $B_s$ for a box $s$ is defined at \eqref{eq:Bs}.
\end{lem}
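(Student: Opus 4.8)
The plan is to collapse the product over all boxes of $\mu$ into a telescoping product over the \emph{rows} of $\mu$. The point is that, according to the explicit list \eqref{eq:wlam_ord} describing $w^{\mu}$ (Definition \ref{dfn:wmu}), the arguments $w^{\mu}_s$ attached to the $i$-th row of $\mu$ run through the geometric progression
$$
 w q^{i-1} t^{1-\mu_i},\ w q^{i-1} t^{2-\mu_i},\ \ldots,\ w q^{i-1}
$$
with common ratio $t$. For arguments in such a progression the factor $f$ of \eqref{eq:Deta} telescopes, so the whole analysis reduces to one elementary identity.

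Concretely, I would first record the single-row identity
$$
 \prod_{k=0}^{n-1} f(A t^k)
 = \frac{(1-A t^{-1})(1-q A t^{n-1})}{(1-A t^{n-1})(1-q A t^{-1})},
$$
which follows from $f(x)=\tfrac{(1-qx)(1-x/t)}{(1-x)(1-xq/t)}$ upon writing $1-x/t=1-(xt^{-1})$ and $1-xq/t=1-q(xt^{-1})$ and telescoping the two resulting pairs of products. Applying it to the $i$-th row with $A=w q^{i-1} t^{1-\mu_i}$ and $n=\mu_i$ gives
$$
 \prod_{s\in\text{row }i} f(w^{\mu}_s)
 = \frac{(1-w q^{i-1} t^{-\mu_i})(1-w q^i)}{(1-w q^{i-1})(1-w q^i t^{-\mu_i})}.
$$

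Next I would multiply over $i=1,\ldots,\ell(\mu)$ and split off the factor $\prod_i \tfrac{1-wq^i}{1-wq^{i-1}}$, which telescopes to $\tfrac{1-wq^{\ell(\mu)}}{1-w}$ and cancels the prefactor $(1-w)$, leaving the closed form
$$
 (1-w)\prod_{s\in\mu} f(w^{\mu}_s)
 = (1-w q^{\ell(\mu)}) \prod_{i=1}^{\ell(\mu)} \frac{1-w q^{i-1} t^{-\mu_i}}{1-w q^i t^{-\mu_i}}.
$$
From this it is immediate that every pole is simple and is located among the points $w=q^{-i}t^{\mu_i}$, $1\le i\le \ell(\mu)$, together with the obvious fact that the numerator and the prefactor $1-wq^{\ell(\mu)}$ contribute only zeros.

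The remaining, and genuinely substantive, step is the cancellation bookkeeping. The pole of the $i$-th denominator factor sits at $w=q^{-i}t^{\mu_i}$, while the zero of the $(i+1)$-st numerator factor sits at $w=q^{-i}t^{\mu_{i+1}}$, and these coincide exactly when $\mu_i=\mu_{i+1}$. Hence the pole at $w=q^{-i}t^{\mu_i}$ survives precisely when $\mu_i>\mu_{i+1}$, that is, precisely when $(i,\mu_i)$ is a removable box of $\mu$; one also checks that no surviving pole is hit by another factor's zero, so that all survivors are simple. Removable boxes are in bijection with the $\nu\subset\mu$, $|\nu|=|\mu|-1$, and since $s(\mu'/\nu')$ is the transpose $(\mu_i,i)\in\mu'$ of the removed box, with $B_{s(\mu'/\nu')}=q^{i-1}t^{1-\mu_i}$, the survivor $w=q^{-i}t^{\mu_i}$ is exactly the point attached to $\nu$ in the statement. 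The hard part is this last paragraph: once the $t$-ratio telescoping has collapsed each row, one must argue cleanly that the only denominators left uncancelled are those coming from the right ends of the corner rows and that no two of them collide. Everything before that is the mechanical single-row telescoping, for which the displayed identity is the entire content.
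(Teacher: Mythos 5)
Your telescoping computation is correct, and it is surely the ``elementary calculation'' the author had in mind (the paper omits its own proof of this lemma, so there is nothing to compare against). In particular the closed form
$$
 (1-w)\prod_{s\in\mu} f(w^{\mu}_s)
 = \bigl(1-w q^{\ell(\mu)}\bigr) \prod_{i=1}^{\ell(\mu)} \frac{1-w q^{i-1} t^{-\mu_i}}{1-w q^{i} t^{-\mu_i}}
$$
is right, and so is the cancellation bookkeeping: since $q$ and $t$ are independent indeterminates, the only numerator zero that can meet the $i$-th denominator zero $w=q^{-i}t^{\mu_i}$ is the $(i{+}1)$-st one at $w=q^{-i}t^{\mu_{i+1}}$ (reading $1-wq^{\ell(\mu)}$ as the $(\ell{+}1)$-st numerator factor with $\mu_{\ell+1}=0$), so the surviving poles are simple and sit exactly at $w=q^{-i}t^{\mu_i}$ for the removable corners $(i,\mu_i)$ of $\mu$, which are indeed in bijection with the $\nu\subset\mu$, $|\nu|=|\mu|-1$.

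The defect is the final identification. With $s(\mu'/\nu')=(\mu_i,i)$ you correctly compute $B_{s(\mu'/\nu')}=q^{i-1}t^{1-\mu_i}$, but then $t/q\cdot B_{s(\mu'/\nu')}=q^{i-2}t^{2-\mu_i}$, which is \emph{not} your pole $q^{-i}t^{\mu_i}$ except when $i=\mu_i=1$; what your pole actually equals is $t/q\cdot B_{s(\mu'/\nu')}^{-1}$. A concrete check: for $\mu=(2)$ the unique pole of $(1-w)f(w/t)f(w)$ is at $w=t^2/q$, whereas $t/q\cdot B_{(2,1)}=t/q\cdot t^{-1}=1/q$. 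So either you must flag that the statement should carry $B_{s(\mu'/\nu')}^{-1}$ --- which would also make it parallel to the companion Lemma \ref{lem:gwpoles}, where the inverse does appear --- or your last sentence asserts a false equality. Everything before that sentence stands; the fix is to state the pole set as $\{w=t/q\cdot B_{s(\mu'/\nu')}^{-1}\}$ (equivalently $w=q^{-i}t^{\mu_i}$ over removable corners) and note the discrepancy with the printed statement.
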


The proof is by an elementary calculation, so we omit it.

\subsubsection{Calculation of $\eta_+(z) \Delta_{h_r} v^{0}(w)$}

As in the previous part,
we start with the calculation of $\eta_+(z) \widehat{E}_r v^{0}(w) \cdot 1_\cal{F}$.

\begin{lem}
For each $r \in \bb{Z}_{\ge 1}$ we have 
\begin{align*}
\eta_{+}(z) \widehat{E}_r v^{(0)}(w) \cdot 1_\cal{F}
=\dfrac{t^{-r(r+1)/2}}{(t^{-1};t^{-1})_r} 
   (1-w/z)^{-1} \prod_{i=0}^{r-1} g(t^{-i} w/z) \cdot 
   v^{(0)}(w) \eta_{-}(z) \eta_{-}(w^{(r-1)}) \cdot 1_\cal{F}.
\end{align*}
as a Laurent formal series of $w/z$ valued in $\cal{F}$.
%
%
\end{lem}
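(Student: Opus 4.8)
The plan is to follow the argument of Lemma~\ref{lem:DErv0} closely, with the current $\eta(z)$ now playing the role that $D(z)$ played there, and to handle the half-current $\eta_+(z)$ exactly as in the demonstration of \S\ref{sssec:r=1}. The first point is that, for any operator $X$, the coefficient of $z^{-d}$ in $\eta(z)X\cdot 1_\cal{F}$ is $\eta_d X\cdot 1_\cal{F}$, so $\eta_+(z)X\cdot 1_\cal{F}$ is precisely the truncation of $\eta(z)X\cdot 1_\cal{F}$ to non-positive powers of $z$. I would therefore compute the full correlator $\eta(z)\widehat{E}_r v^{(0)}(w)\cdot 1_\cal{F}$ first and then keep track of which part survives this truncation.

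Substituting \eqref{eq:E_r} for $\widehat{E}_r$, I would move $\eta(z)$ to the right using Lemma~\ref{lem:D_eta_v}. Commuting it across $\nord{\prod_i\eta(z_i)}$ yields the contraction $\prod_{i=1}^r g(z_i/z)$ by \eqref{eq:etaeta} (the kernel $g$ replaces the $f$ of Lemma~\ref{lem:DErv0}, since here two $\eta$'s meet), and pulling $v^{(0)}(w)$ to the far left by \eqref{eq:etav0} yields $(1-w/z)^{-1}\prod_{i=1}^r(1-w/z_i)^{-1}$ -- note the inverse powers, in contrast to the factor $(1-w/z)$ that \eqref{eq:Dv0} produced for $D(z)$. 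This leaves, inside the constant-term bracket, the scalar kernel multiplying $v^{(0)}(w)\nord{\eta(z)\prod_i\eta(z_i)}\cdot 1_\cal{F}$.

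The constant term in $z_1,\dots,z_r$ is then extracted exactly as in Lemma~\ref{lem:DErv0} by means of Lemma~\ref{lem:expand}: the only poles that contribute are those of the factors $1/(1-w/z_i)$, and the $\ve(z_j/z_i)$ telescope so that the iterated residues sit successively at $z_i = w t^{-k}$. Each residue evaluates one factor $g(z_i/z)$, assembling $\prod_{i=0}^{r-1} g(t^{-i}w/z)$, and turns one current $\eta(z_i)$ into a creation factor $\eta_-(wt^{-k})$; together these produce $\eta_-(w^{(r-1)})$. The new feature compared with Lemma~\ref{lem:DErv0} is that the outer current does not disappear: whereas $D(z)\cdot 1_\cal{F}=1_\cal{F}$, one has $\eta(z)\cdot 1_\cal{F}=\eta_-(z)\cdot 1_\cal{F}$, and this is the origin of the extra factor $\eta_-(z)$ in the asserted formula.

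The step I expect to be the main obstacle is reconciling the half-current $\eta_+(z)$ with the surviving $\eta_-(z)$: the non-positive-power-in-$z$ truncation must be carried out against the kernel $(1-w/z)^{-1}\prod_i g(t^{-i}w/z)$ using the substitution part of Lemma~\ref{lem:expand}, just as $\eta_-(zt/q)$ was replaced by $\eta_-(w)$ in \eqref{eq:r=1:eta}. One also has to check that the residues in the $z_i$ are not disturbed by the zeros and poles of the new factors $g(z_i/z)$; this holds because each $g(z_i/z)$ is expanded as a power series in $z_i/z$ and is regular at the points $z_i=wt^{-k}$. The remaining effort is the $\ve$-telescoping bookkeeping and keeping the ordering of the various $\eta_-$ relative to $v^{(0)}(w)$ straight, but no identity beyond Lemma~\ref{lem:D_eta_v} and Lemma~\ref{lem:expand} is needed.
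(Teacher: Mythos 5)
Your proposal matches the paper's proof essentially step for step: it substitutes the contour-integral formula \eqref{eq:E_r} for $\widehat{E}_r$, commutes $\eta(z)$ and $v^{(0)}(w)$ through via \eqref{eq:etaeta} and \eqref{eq:etav0} to produce the kernel $(1-w/z)^{-1}\prod_i g(z_i/z)/(1-w/z_i)$, and then extracts the constant term in $z_r,\dots,z_1$ by the same telescoping iterated residues at $z_i=w t^{-(r-i)}$ guaranteed by Lemma \ref{lem:expand}, with the surviving $\eta_-(z)$ arising exactly as you say from $\eta(z)$ acting on $1_{\cal F}$. This is the paper's argument (which it itself presents as a variant of Lemma \ref{lem:DErv0}), so nothing further is needed.
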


\begin{proof}
The proof is similar to that of Lemma \ref{lem:DErv0}.
By Fact \ref{fct:E_r} and Lemma \ref{lem:D_eta_v},
we have 
\begin{align*}
&\eta(z) \widehat{E}_r v^{(0)}(w)  
\\
&= \dfrac{t^{-r(r+1)/2}}{(t^{-1};t^{-1})_r} 
  \Bigl[
   (1-w/z)^{-1} \prod_{i=1}^r \dfrac{ g(z_i/z) }{ 1-w/z_i }
   \prod_{1\le i<j \le r}\ve(z_j/z_i) \cdot 
   v^{(0)}(w) \eta_{-}(z) \prod_{i=1}^{r} \eta_{-}(z_i)  
\Bigr]_1.
\end{align*}
Now consider the coefficient of ${z_r}^0$.
By Lemma \ref{lem:expand},
we need only to consider the residue at $z_r = w$.
Thus
\begin{align*}
&\eta_{+}(z) \widehat{E}_r v^{(0)}(w) \cdot 1_\cal{F}
\\
&= \dfrac{t^{-r(r+1)/2}}{(t^{-1};t^{-1})_r} 
  \Bigl[
   \dfrac{g(w/z)}{ 1-w/z } 
   \prod_{i=1}^{r-1} \dfrac{ g(z_i/z) \ve(w/z_i) }{ 1-w/z_i }  
   \prod_{1\le i<j \le r-1}\ve(z_j/z_i)  
   \cdot v^{(0)}(w) \eta_{-}(z) \eta_{-}(w)  \prod_{i=1}^{r-1} \eta_{-}(z_i)  
 \Bigr]_1 \cdot 1_\cal{F}
\\
&=\dfrac{t^{-r(r+1)/2}}{(t^{-1};t^{-1})_r} 
  \Bigl[
   \dfrac{g(w/z)}{ 1-w/z } 
   \prod_{i=1}^{r-1} \dfrac{g(z_i/z)}{ 1-t^{-1} w/z_i }   
   \prod_{1\le i<j \le r-1}\ve(z_j/z_i)  
   \cdot v^{(0)}(w) \eta_{-}(z) \eta_{-}(w)  \prod_{i=1}^{r-1} \eta_{-}(z_i)  
  \Bigr]_1 \cdot 1_\cal{F}.
\end{align*}
Then we see that the coefficient of ${z_{r-1}}^0$,
is given by the residue at $z_{r-1} = w/t$.
Repeating this process,
we see that it is enough to consider the residue 
at $z_i = w/t^{r-i}$ ($1 \le i \le r$).
Then one can get the result immediately.
\end{proof}

As in Definition \ref{dfn:wmu}, we set
$$
 g(w^{\mu}/z) := \prod_{s \in \mu} g(w^{\mu}_s/z).
$$

\begin{lem}\label{lem:etaElamv0}
For a partition $\lambda$ we have 
\begin{align*}
 \eta_{+}(z) \widehat{E}_{\lambda} v_0(w) \cdot 1_\cal{F}
= C_{\lambda}(t)
  \sum_{\mu \, \vdash |\lambda|} 
  (1-w/z)^{-1}  g(w^{\mu}/z)
  R_{\lambda,\mu}(q,t) \cdot v^{(0)}(w) \eta_{-}(z)\eta_{-}(w^\mu) 
  \cdot 1_\cal{F},
\end{align*}
where $C_{\lambda}(t)$ is given by \eqref{eq:Cl}
and $R_{\lambda,\mu}(q,t)$ is given by \eqref{eq:Rlm}.
\end{lem}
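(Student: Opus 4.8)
The plan is to prove this as the exact analogue of Proposition~\ref{prop:DElamv0}, with the operator $D(z)$ replaced by $\eta_{+}(z)$; the single-row computation of $\eta_{+}(z)\widehat{E}_r v^{(0)}(w)\cdot 1_\cal{F}$ established just above is the base case and already exhibits every new feature. The ingredients are the factorized form of $\widehat{E}_\lambda$ from Fact~\ref{fct:E_r}, the commutation relations \eqref{eq:etaeta} and \eqref{eq:etav0} of Lemma~\ref{lem:D_eta_v}, and the constant-term bookkeeping of Lemma~\ref{lem:expand}.

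First I would expand $\widehat{E}_\lambda=\widehat{E}_{\lambda_1}\cdots\widehat{E}_{\lambda_{\ell}}$ using \eqref{eq:E_r}. Merging the $\ell$ normal-ordered blocks into a single normal ordering $\nord{\prod_{i=1}^{|\lambda|}\eta(z_i)}$ by means of \eqref{eq:etaeta} produces the intra-block factors $\prod_i\prod_{j<k}\ve(z_{\lambda^{(i-1)}+k}/z_{\lambda^{(i-1)}+j})$ and the inter-block factors $\prod_{i<j}\prod_{k,l}g(z_{\lambda^{(j-1)}+l}/z_{\lambda^{(i-1)}+k})$, so that $\widehat{E}_\lambda=C_{\lambda}(t)\,\bigl[\,(\text{intra }\ve)(\text{inter }g)\,\nord{\prod_i\eta(z_i)}\,\bigr]_1$ with $C_\lambda(t)$ as in \eqref{eq:Cl}. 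Then I commute $\eta(z)$ to the right: past the normal-ordered product via $\eta(z)\eta(z_i)=g(z_i/z)\nord{\eta(z)\eta(z_i)}$ from \eqref{eq:etaeta}, producing $\prod_i g(z_i/z)$, and past $v^{(0)}(w)$ via \eqref{eq:etav0}, producing the overall factor $(1-w/z)^{-1}$ together with the factors $\prod_i(1-w/z_i)^{-1}$. Acting on the vacuum, every vertex operator collapses to its creation part, leaving $v^{(0)}(w)\,\eta_{-}(z)\prod_i\eta_{-}(z_i)\cdot 1_\cal{F}$ and identifying the left-hand $\eta_{+}(z)$ exactly as in the single-row lemma.

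The decisive step is the constant-term extraction $\bigl[\,\cdots\,\bigr]_1$ in $z_1,\dots,z_{|\lambda|}$. By Lemma~\ref{lem:expand} this reduces to a cascade of residues, and the poles in the $z_i$ are governed solely by the factors $\prod_i(1-w/z_i)^{-1}$ together with the intra-block $\ve$'s --- precisely the configuration already analyzed in Proposition~\ref{prop:DElamv0}. Hence the admissible residues are indexed by partitions $\mu\vdash|\lambda|$ with $(z_i)=w^\mu$ (Definition~\ref{dfn:wmu}, in the order \eqref{eq:wlam_ord}), giving the residue factor $R_{\lambda,\mu}(q,t)$ of \eqref{eq:Rlm}; at the same time $\prod_i g(z_i/z)\mapsto g(w^\mu/z)$ and $\prod_i\eta_{-}(z_i)\mapsto\eta_{-}(w^\mu)$, while $(1-w/z)^{-1}$ survives untouched. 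Summing over $\mu$ yields the claimed identity.

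The point requiring the most care is this pole analysis: I must confirm that replacing $D(z)$ by $\eta_{+}(z)$ --- which turns $f(z_i/z)$ into $g(z_i/z)$ and the prefactor $(1-w/z)$ into $(1-w/z)^{-1}$ --- neither creates nor destroys any pole relevant to the residue cascade, i.e.\ that $g(z_i/z)$ is a harmless spectator at the successive residue locations $z_i=w t^{-k}$ (as formal series in $w/z$), and that the nested order of residues within and across blocks is compatible with the ordering \eqref{eq:wlam_ord} defining $R_{\lambda,\mu}(q,t)$. Since Proposition~\ref{prop:DElamv0} settles the identical combinatorial residue computation, the only genuinely new verification is the regularity of the $g$-factor where the residues are taken.
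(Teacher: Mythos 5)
Your proposal is correct and follows exactly the route the paper intends: the paper's own proof of this lemma is simply the remark that it is ``similar to Proposition~\ref{prop:DElamv0}'', and you have spelled out precisely that argument, correctly substituting $g(z_i/z)$ for $f(z_i/z)$ and $(1-w/z)^{-1}$ for $(1-w/z)$ and noting that these substitutions do not alter the pole structure in the $z_i$ that produces $R_{\lambda,\mu}(q,t)$. Nothing further is needed.
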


\begin{proof}
The proof is similar as in Proposition \ref{prop:DElamv0}, so we omit it.
\end{proof}

By an elementary calculation, we also have
\begin{lem}\label{lem:gwpoles}
The poles of $(1-w)^{-1} \prod_{s \in w_\mu}g(w^{\mu}_s)$ are of order one,
and are placed at 
$$
 \{w = {B_{s(\lambda'/\mu')}}^{-1} \mid \lambda \vdash |\mu| + 1, \lambda \supset \mu \}.
$$
\end{lem}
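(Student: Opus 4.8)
The plan is to put the function in the statement, which I write as $\Pi(w):=(1-w)^{-1}\prod_{s\in\mu}g(w^{\mu}_s)$, into completely factored form and then read off its poles. The key structural input is that, by the explicit list \eqref{eq:wlam_ord}, the variables attached to a fixed row $i$ of $\mu$ are $w^{\mu}_{(i,j)}=w\,q^{i-1}t^{\,j-\mu_i}$ for $1\le j\le\mu_i$, a geometric progression of ratio $t$. Since the four linear factors of $g(x)=\frac{(1-x)(1-qx/t)}{(1-qx)(1-x/t)}$ are tied together by the shifts $x\mapsto x/t$ and $x\mapsto qx/t$, the product along a single row telescopes. The first step is therefore to verify, using $1-w^{\mu}_{(i,j)}/t=1-w^{\mu}_{(i,j-1)}$ and $1-qw^{\mu}_{(i,j)}/t=1-qw^{\mu}_{(i,j-1)}$, the row identity
\[
 \prod_{j=1}^{\mu_i} g\bigl(w^{\mu}_{(i,j)}\bigr)
 = \frac{(1-wq^{i-1})\,(1-wq^{i}t^{-\mu_i})}{(1-wq^{i-1}t^{-\mu_i})\,(1-wq^{i})}.
\]

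The second step is to multiply over the rows $i=1,\dots,\ell$ (with $\ell=\ell(\mu)$) and telescope once more. The pure-$q$ part collapses, $\prod_{i=1}^{\ell}\frac{1-wq^{i-1}}{1-wq^{i}}=\frac{1-w}{1-wq^{\ell}}$, and the numerator $(1-w)$ produced here is exactly what cancels the prefactor $(1-w)^{-1}$; this is the mechanism that removes the apparent pole at $w=1$. The mixed part $\prod_{i}\frac{1-wq^{i}t^{-\mu_i}}{1-wq^{i-1}t^{-\mu_i}}$ telescopes inside each maximal block of equal-length rows, since there $\mu_i$ is constant and consecutive factors share $(1-wq^{i}t^{-\mu_i})$. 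Recording the corners of $\mu$ as $r_1<\dots<r_N=\ell$, with the $b$-th block consisting of the rows of common length $m^{(b)}$ and occupying rows $r_{b-1}+1,\dots,r_b$ (set $r_0:=0$), this yields the closed form
\[
 \Pi(w)=\frac{1}{1-wq^{\ell}}\;
 \prod_{b=1}^{N}\frac{1-wq^{r_b}t^{-m^{(b)}}}{1-wq^{r_{b-1}}t^{-m^{(b)}}}.
\]

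In this form every pole is of order one. The third step is to match the $N+1$ denominator factors to the addable boxes of $\mu$: the box $(\ell+1,1)$ below the diagram gives $1-wq^{\ell}$; the box $(1,\mu_1+1)$ extending the longest row gives $1-wq^{r_0}t^{-m^{(1)}}=1-wt^{-\mu_1}$; and each re-entrant corner $(r_b+1,\,m^{(b+1)}+1)$ gives $1-wq^{r_b}t^{-m^{(b+1)}}$. For an addable box at $(a,b)$ the pole lies at $w=q^{1-a}t^{\,b-1}$, which by \eqref{eq:Bs} equals $B_{s(\lambda'/\mu')}^{-1}$ with $\lambda=\mu\cup\{(a,b)\}$, giving exactly the asserted pole set. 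It remains only to see that these factors are pairwise distinct and that none is cancelled by a numerator factor $1-wq^{r_b}t^{-m^{(b)}}$; both follow by equating the $q$- and $t$-exponents (legitimate since $q,t$ are indeterminate), which forces the impossible $r_b=r_{b-1}$ or $m^{(b)}=0$. Hence all poles are simple and occur precisely at the claimed locations.

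The hard part will be the second telescoping together with this bookkeeping: grouping the rows into blocks of equal length, checking that the prefactor $(1-w)^{-1}$ is absorbed rather than producing a pole at $w=1$, and carrying out the content identification $q^{1-a}t^{\,b-1}=B_{s(\lambda'/\mu')}^{-1}$ with the no-cancellation check. As a global consistency test, since $f=1/g$ one has $\Pi(w)=\bigl[(1-w)\prod_{s\in\mu}f(w^{\mu}_s)\bigr]^{-1}$, so the poles found here must coincide with the zeros of the function whose poles are described in Lemma \ref{lem:fwpoles}.
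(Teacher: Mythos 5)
Your proof is correct, and it supplies an argument the paper itself omits: the paper states Lemma \ref{lem:gwpoles} (like its companion Lemma \ref{lem:fwpoles}) with only the remark that it follows ``by an elementary calculation,'' so there is no proof in the text to compare against. Your double telescoping is exactly the right way to carry that calculation out. The row identity is correct because consecutive variables in a row of $w^{\mu}$ differ by a factor of $t$, so the factors $(1-x/t)$ and $(1-qx/t)$ of $g$ cancel against $(1-x)$ and $(1-qx)$ of the neighbouring box; the pure-$q$ telescoping over rows produces the $(1-w)$ that absorbs the prefactor; and the block-by-block telescoping of the mixed factors leaves one simple denominator factor per addable box of $\mu$, whose location $w=q^{1-a}t^{c-1}$ matches $B_{s(\lambda'/\mu')}^{-1}$ after transposition. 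Your distinctness/no-cancellation check via the $q$- and $t$-exponents is also needed and correct. One point worth making explicit: Definition \ref{dfn:wmu}(1) literally writes $w^{\mu}_s=wq^{i(s)-1}t^{l(s)}$ with $l(s)$ the leg, which would \emph{not} give a geometric progression along a row; you correctly read the variables off the explicit ordered list \eqref{eq:wlam_ord}, i.e.\ $w^{\mu}_{(i,j)}=wq^{i-1}t^{j-\mu_i}$ (exponent $-a_\mu(s)$ rather than $l_\mu(s)$), and that choice is what both the telescoping and the consistency with Lemma \ref{lem:fwpoles} (via $f=1/g$) confirm to be the intended one.
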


\begin{rmk}
\begin{enumerate}
\item
The boxes appearing in the description of the poles 
are the attachable boxes to $\mu$,
which also appeared in Garsia and Tesler's change of variables in Lemma \ref{lem:dq}.
\item
The transposed diagrams appear in the description,
which seems to be related to the similar phenomena in the description of 
Gordon filtration \cite{FHHSY}
on the space of bosonized Macdonald difference operators.
\end{enumerate}
\end{rmk}

Then by Lemma \ref{lem:expand} (3), Lemma \ref{prop:etaElamv0} and Lemma \ref{lem:gwpoles}
we conclude 
\begin{prop}\label{prop:etaElamv0}
For a partition $\lambda$ we have 
\begin{align*}
\eta_{+}(z) \widehat{E}_{\lambda} v_0(w) \cdot 1_\cal{F}
= C_{\lambda}(t)
  \sum_{\mu \, \vdash |\lambda|} 
  R_{\lambda,\mu}(q,t)
  \sum_{\nu \, \vdash |\mu|+1,\ \nu \supset \mu} 
  \dfrac{G_{\nu,\mu}(q,t)}{1-w^\nu_{s(\nu/\mu)}/z}
  \cdot v^{(0)}(w) \eta_{-}(w^\nu) 
  \cdot 1_\cal{F}
\end{align*}
with $G_{\nu,\mu}(q,t) := \Res_{w=z B_{s(\nu'/\mu')}} (1-w/z )^{-1} g(w^{\mu}/z)$.
\end{prop}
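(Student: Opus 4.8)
The plan is to start from Lemma~\ref{lem:etaElamv0}, which already expresses $\eta_{+}(z)\widehat{E}_{\lambda}v_0(w)\cdot 1_{\cal{F}}$ as $C_{\lambda}(t)\sum_{\mu\,\vdash|\lambda|}(1-w/z)^{-1}g(w^{\mu}/z)R_{\lambda,\mu}(q,t)\cdot v^{(0)}(w)\eta_{-}(z)\eta_{-}(w^{\mu})\cdot 1_{\cal{F}}$, and to rewrite, for each fixed $\mu$, the single summand into a sum over the one-box extensions $\nu\supset\mu$. Since $\eta_{+}(z)=\sum_{d\ge0}\eta_d z^{-d}$, the left-hand side is supported in non-positive powers of $z$, so the whole task is to compute the non-positive part of the summand; summing over $\mu$ and reinstating $C_\lambda(t)R_{\lambda,\mu}(q,t)$ then gives the statement.

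First I would separate the $z$-dependence into an operator factor and a scalar factor. The vector $A(z):=v^{(0)}(w)\eta_{-}(z)\eta_{-}(w^{\mu})\cdot 1_{\cal{F}}$ is a power series in $z$ with only non-negative powers, because $v^{(0)}(w)$, $\eta_{-}(z)$ and $\eta_{-}(w^{\mu})$ all act on $1_{\cal{F}}$ through the creation exponentials in the $p_n$, with $z$ entering solely via $\exp(\sum_{n>0}(1-t^{-n})p_n z^{n}/n)$. The scalar prefactor $P(z):=(1-w/z)^{-1}g(w^{\mu}/z)$ is a rational function of $w/z$ with $P(\infty)=1$ and, thanks to the factor $(1-w/z)^{-1}$, with $P(z)\to0$ as $z\to0$. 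By Lemma~\ref{lem:gwpoles} its only poles are simple and sit exactly at the values $z_{\nu}:=w^{\nu}_{s(\nu/\mu)}$ indexed by $\nu\vdash|\mu|+1$, $\nu\supset\mu$.

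Next I would decompose $P(z)$ in partial fractions. As $P$ has simple poles only at the $z_\nu$ and vanishes at $z=0$, it carries no constant term and equals $\sum_{\nu}G_{\nu,\mu}(q,t)(1-z_{\nu}/z)^{-1}$, where $G_{\nu,\mu}(q,t)$ is the corresponding residue, matching $\Res_{w=zB_{s(\nu'/\mu')}}(1-w/z)^{-1}g(w^{\mu}/z)$ after the standard Jacobian of the $w\leftrightarrow z$ change of variables. Multiplying by $A(z)$ and extracting non-positive powers, the extraction Lemma~\ref{lem:expand} lets me replace $A(z)$ by its value $A(z_{\nu})$ at each pole, the error living in strictly positive powers of $z$ that $\eta_{+}(z)$ discards; the vanishing $P(0)=0$ is precisely what kills any stray $z^{0}$-contribution $A_0$, leaving $\sum_{\nu}G_{\nu,\mu}(q,t)(1-z_{\nu}/z)^{-1}A(z_{\nu})$. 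Finally I would use the alphabet identity $w^{\mu}\sqcup\{z_{\nu}\}=w^{\nu}$, i.e.\ adjoining the pole value to the multiset $w^{\mu}$ yields $w^{\nu}$, so that $\eta_{-}(z_{\nu})\eta_{-}(w^{\mu})=\eta_{-}(w^{\nu})$ and $A(z_{\nu})=v^{(0)}(w)\eta_{-}(w^{\nu})\cdot 1_{\cal{F}}$, which is the claimed summand.

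The hard part will be this last combinatorial matching. One has to check simultaneously that the poles produced by Lemma~\ref{lem:gwpoles} occur exactly at the denominators $z_{\nu}=w^{\nu}_{s(\nu/\mu)}$ of the target formula, and that evaluating the creation-operator product at $z_{\nu}$ genuinely promotes $w^{\mu}$ to $w^{\nu}$. The subtlety is that attaching the box $s(\nu/\mu)$ in some row $i_0$ shifts the variable of every box already in that row, so the identity $\eta_{-}(z_{\nu})\eta_{-}(w^{\mu})=\eta_{-}(w^{\nu})$ holds only at the level of the unordered product: as multisets $w^{\nu}$ differs from $w^{\mu}$ by the single new entry $z_{\nu}$, even though the individual entries of the affected row are not preserved box by box. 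Reconciling the residue $G_{\nu,\mu}(q,t)$ computed in $z$ with the stated residue in $w$ is a further bookkeeping point that the homogeneity of $P$ in $w/z$ settles.
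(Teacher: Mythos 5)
Your proposal is correct and follows essentially the same route as the paper, which deduces the Proposition in one line from Lemma \ref{lem:etaElamv0}, the pole description in Lemma \ref{lem:gwpoles}, and the extraction Lemma \ref{lem:expand}; your partial-fraction decomposition of $(1-w/z)^{-1}g(w^{\mu}/z)$ and the evaluation of $\eta_{-}(z)$ at the simple poles is exactly the intended argument, spelled out. The only quibble is in your discussion of the multiset identity $w^{\mu}\sqcup\{z_{\nu}\}=w^{\nu}$: it is the \emph{column} of the added box (through the leg lengths), not its row, whose entries get shifted, though the unordered identity you actually use is the right one (modulo the paper's own notational inconsistency between Definition \ref{dfn:wmu} and \eqref{eq:wlam_ord}).
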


\subsubsection{The remaining part of the proof}

Let us recall the relation we must show.
\begin{align}\label{eq:main:r}
 \bigl(t/q \cdot \eta_{+}( z t/q) \Delta_{h_{r-1}} + D(z) \Delta_{h_{r}}\bigr) 
 v^{(0)}(w) \cdot 1_{\cal{F}} \equiv 0.
\end{align}
Here we used the symbol $\equiv$ as in \S\ref{sssec:r=1}.

By Lemma \ref{lem:B-s} we have
\begin{align*}
 \Delta_{h_r}
&= \sum_{k=0}^r (-t)^k \theta_{r-k} 
   \sum_{r_i \ge 1,\, \sum_i r_i = k} 
    q^{\sum_{i}(i-1)r_i} \widehat{E}_{r_1}\widehat{E}_{r_2}\cdots 
\\
&= \sum_{k=0}^r (-t)^k \theta_{r-k} 
   \sum_{\lambda \,\vdash k} \widehat{E}_\lambda
    \sum_{\{r_i\} = \lambda }     q^{\sum_{i}(i-1)r_i}. 
\end{align*}
At the last line that the sequence $(r_i)$ runs over the set of permutations of $\lambda$.
We write this formula as 
\begin{align}\label{eq:DHrwH}
 \Delta_{h_r}
= \sum_{|\lambda| \le r}  \delta_\lambda(q,t) \widehat{E}_\lambda,\qquad
 \delta_\lambda(q,t) 
:=(-t)^{|\lambda|} \theta_{r-|\lambda|} \sum_{\{r_i\} = \lambda }     q^{\sum_{i}(i-1)r_i}.
\end{align}

By Proposition \ref{prop:DElamv0}, Lemma \ref{lem:fwpoles} and \eqref{eq:DHrwH}, 
we have 
\begin{align*}
&D(z) \Delta_{h_{r}} v^{(0)}(w) \cdot 1_{\cal{F}}
\\
&\equiv
 \sum_{|\lambda| \le r} 
  \delta_\lambda(q,t) C_{\lambda}(t)
  \sum_{ \mu\, \vdash |\lambda| } 
  R_{\lambda,\mu}(q,t) 
  \sum_{\nu \, \vdash |\mu|-1, \ \nu \subset \mu }
  \dfrac{F_{\nu,\mu}(q,t)}{1-q/t \cdot w^{\mu}_{s(\mu/\nu)} /z}
  \cdot v_0(w) \eta_{-}(w^\mu) \cdot 1_{\cal{F}}
\\
&=
 \sum_{|\mu| \le r} v_0(w) \eta_{-}(w^\mu) \cdot 1_{\cal{F}} 
  \sum_{\nu \, \vdash |\mu|-1, \ \nu \subset \mu }
   \dfrac{F_{\nu,\mu}(q,t)}{1-q/t \cdot w^{\mu}_{s(\mu/\nu)} /z}
  \sum_{ \lambda\, \vdash |\mu| } 
   \delta_\lambda(q,t) C_{\lambda}(t) R_{\lambda,\mu}(q,t) 
\end{align*}
with
$F_{\nu,\mu}(q,t) := \Res_{w=z B_{s(\mu'/\nu')}} (1-w/z )^{-1} g(w^{\mu}/z)$.

On the other hand, 
by Proposition \ref{prop:etaElamv0}, Lemma \ref{lem:gwpoles} and \eqref{eq:DHrwH}, 
we see that $\eta_{+}(z) \Delta_{h_{r-1}} v^{(0)}(w)  \cdot 1_{\cal{F}}$ is of the form
\begin{align*}
&\eta_{+}(z) \Delta_{h_{r-1}} v^{(0)}(w) \cdot 1_{\cal{F}}
\\
&=
 \sum_{|\lambda| \le r-1} 
  \delta_\lambda(q,t) C_{\lambda}(t)
  \sum_{\mu \, \vdash |\lambda|} 
  R_{\lambda,\mu}(q,t)
  \sum_{\nu \, \vdash |\mu|+1,\ \nu \supset \mu} 
  \dfrac{G_{\nu,\mu}(q,t)}{1-w^\nu_{s(\nu/\mu)}/z}
  \cdot v^{(0)}(w) \eta_{-}(w^\nu) 
  \cdot 1_\cal{F}
\\
&=
 \sum_{|\mu| \le r} v^{(0)}(w) \eta_{-}(w^\mu) \cdot 1_\cal{F}
  \sum_{\nu \, \vdash |\mu|-1,\ \nu \subset \mu} 
  \dfrac{G_{\mu,\nu}(q,t)}{1-w^\mu_{s(\mu/\nu)}/z}
 \sum_{ \lambda \, \vdash |\mu| -1} 
  \delta_\lambda(q,t) C_{\lambda}(t) R_{\lambda,\mu}(q,t).
\end{align*}
Now by a direct calculation one can show 
\begin{align*}
 F_{\nu,\mu}(q,t) 
 \sum_{ \lambda\, \vdash |\mu|} 
  \delta_\lambda(q,t) C_{\lambda}(t) R_{\lambda,\mu}(q,t) 
=
 t/q \cdot G_{\mu,\nu}(q,t) 
 \sum_{ \lambda \, \vdash |\mu| -1} 
  \delta_\lambda(q,t) C_{\lambda}(t) R_{\lambda,\mu}(q,t),
\end{align*}
which implies the desired consequence \eqref{eq:main:r}.

\section{Interpretation via geometry of Hilbert scheme of points over plane}
\label{sec:geom}

Some parts of our result can be interpreted 
in terms of the geometry of Hilbert scheme of points over the affine plane.

\subsection{Recollection of Haiman's work}
\label{ssec:Haiman}

For the explanation, let us recall Haiman's work \cite{H:2001,H:2002,H:2003} briefly.
Our presentation follows \cite[\S1]{N:2012}.

Set $X := \bb{C}^2$.
Denote by $X^{[n]}$ the Hilbert scheme of $n$ points in $X$.
Set theoretically it is the collection of ideals $I$ in $\bb{C}[x,y]$ 
with $\dim_{\bb{C}} \bb{C}[x,y]/I = n$.

$X$ has a natural action of the torus $\bb{T} := \bb{C}^* \times  \bb{C}^*$,
and $X^{[n]}$ has an induced action.
Let us express the $\bb{T}$-action on $X$ by $(x,y) \mapsto (t x, q y)$.

What we want to mention is the following famous result due to Haiman \cite{H:2003}:
The Macdonald symmetric functions correspond to 
elements in the equivariant $K$-group $K_{\bb{T}}(X^{[n]})$
given by $\bb{T}$-fixed points.
Note that $K_{\bb{T}}(X^{[n]})$ is a module over 
the representation ring $R(\bb{T}) \cong \bb{Z}[q^{\pm1},t^{\pm1}]$.
Let us describe this fact in detail.

The $\bb{T}$-fixed points in $X^{[n]}$ correspond to monomial ideals in $\bb{C}[x,y]$, 
so that they are parametrized by partitions $\lambda$ of $n$. 
We denote by $I_{\lambda} \in X^{[n]}$ the corresponding $\bb{T}$-fixed point.

The Hilbert scheme $X^{[n]}$ is a fine moduli scheme 
and has a tautological bundle $\cal{B}$.
It is a vector bundle whose fiber at the point corresponding to the ideal $I$ in $\bb{C}[x,y]$
is given by $\bb{C}[x,y]/I$.
The fiber over the fixed point $I_\lambda$ is a $\bb{T}$-module, 
and its character is given by 
\begin{align}\label{eq:chB}
 \ch \cal{B}_{I_\lambda} = \sum_{s\in\lambda}q^{j(s)-1}t^{i(s)-1} = e_1[B_\lambda(q,t^{-1})]
\end{align}
Here the alphabet $B_\lambda$ \eqref{eq:Bs} naturally appears.

We give another formula of $\bb{T}$-character.
Let us denote by $\wedge_{-1} T^*X^{[n]}$
The alternating sum of exterior powers of theT�@cotangent bundle $T^* X^{[n]}$.
Then its fiber on the fixed point $I_\lambda$ has the following $\bb{T}$-character.
\begin{align}\label{eq:ch:T}
\ch \wedge_{-1} T_{I_\lambda}^*X^{[n]}
=\sum_{s \in \lambda}
 (1 -t^{-l_\lambda(s)} q^{a_\lambda(s)+1})(1 -t^{l_\lambda(s)+1} q^{-a_\lambda(s)})
=c_{\lambda}(q^{-1},t)c'_{\lambda}(q,t^{-1}).
\end{align}

The main object in \cite{H:2001} is the isospectral Hilbert scheme $X_n$,
which is defined to be the reduced fiber product
\begin{align*}
\xymatrix{
 X_n \ar[r]^{f} \ar[d]_{\rho} \ar@{}[dr]|\square
 & X^n \ar[d]
\\
 X^{[n]} \ar[r]  & S^n X,
}
\end{align*}
where $S^n X = X^n/S_n$ is the symmetric product of $X$.
The main result in \cite{H:2001} states that 
$X_n$ is normal, Cohen-Macaulay and Gorenstein.

As an application of this result together with the result by Bridgeland-King-Reid \cite{BKR}, 
Haiman \cite{H:2002} proved that the functor
$$ 
 \mathbf{R}f_* \circ \rho^{*} :  D^b(\Coh X^{[n]}) \longrightarrow D^b_{S_n}(\Coh X^n)
$$
is an equivalence of categories.
Here $D^b(-)$ denotes the bounded derived category of coherent sheaves,
and  $D^b_{S_n}(-)$ is the derived category of $S_n$-equivariant coherent sheaves.
This equivalence holds also for $\bb{T}$-equivariant derived categories.

The category $D^b_{S_n}(\Coh X^n)$ is identified with 
the derived category of bounded complexes of finitely generated 
$S_n$-equivariant $\bb{C}[x, y]^{\otimes n}$-modules.

On the equivariant Grothendieck group level, we have a natural isomorphism
\begin{align}\label{eq:KH}
 K_{\bb{T}}(X^{[n]}) \xrightarrow{\ \sim \ }
 K_{S_n \times \bb{T}}(X^n).
\end{align}

In the present situation, 
one can consider the push-forward homomorphism for $X_n \to \mathrm{pt}$. 
Although this morphism is not a proper,
the push-forward map is well-defined on the localized Grothendieck group
$K_{S_n \times \bb{T}}(\mathrm{pt}) \otimes_{R(\bb{T})} \mathrm{Frac}(R(\bb{T}))
 \cong 
 R(S_n \times \bb{T}) \otimes_{\bb{Z}[q^{\pm 1},t^{\pm 1}]} \bb{Q}(q,t)
 \cong
 R(S_n)  \otimes \bb{F}$
by the Atiyah-Bott-Lefschetz localization formula. 
($\mathrm{Frac}$ denotes the field of fractions.)
Recall also that via the  Frobenius map 
the representation ring $R(S_n)$ is isomorphic to $\Lambda_n$,
the degree $n$ part of the ring $\Lambda$ of symmetric functions.
Combining the push-forward homomorphism and the Frobenius map,
we have $K_{S_n \times \bb{T}}(X^n) \to \Lambda_{\bb{F},n}$.

Composing the last map with the isomorphism \eqref{eq:KH} we have
$$
 \Phi: K_{\bb{T}}(X^{[n]}) \to \Lambda_{\bb{F},n}.
$$
The image of $\Phi$ is described in \cite[Proposition 5.4.6]{H:2003}:
$\Phi$ induces an isomorphism
$$
 K_{\bb{T}}(X^{[n]}) \xrightarrow{\ \sim \ } 
 \bigl\{ f[Y] \in \Lambda_{\bb{F},n} 
       \mid f[(1-q)(1-t)Y] \in \Lambda_{\bb{Z}[q^{\pm1}, t^{\pm1}],n}  
 \bigr\}.
$$
Here we used the plethystic notation.

Now we can state the result of Haiman \cite{H:2001}.
For a partition $\lambda$ of $n$,
the image of the $\bb{T}$-fixed point $I_\lambda$ is given by 
$$
 \Phi([I_\lambda]) = \widetilde{H}_\lambda,
$$
where $\widetilde{H}_\lambda$ is the modified Macdonald symmetric function 
defined to be
\begin{align}\label{eq:HaimanH}
 \widetilde{H}_\lambda[Y;q,t] 
  := t^{n(\lambda)}J_\lambda[Y/(1-t^{-1});q,t^{-1}].
\end{align}
The symbol $[I_\lambda]$ denotes the class of the fixed point $I_\lambda$ 
in the Grothendieck group.
Below we will often omit the bracket $[ \ ]$ for simplicity.

Let us close this subsection with the remark on the operator $\nabla$. 
Consider the determinant line bundle $\cal{L} := \wedge^n \cal{B}$ of 
the tautological bundle $\cal{B}$ on $X^{[n]}$.
By \eqref{eq:chB} the $\bb{T}$-character of the fiber of $\cal{L}$ 
at the fixed point $I_\lambda$ is given by 
$$
 \ch \cal{L}_{I_\lambda} = q^{n(\lambda')}t^{n(\lambda)} = e_n[B_\lambda(q,t^{-1})],
$$
which is the eigenvalue of the operator $\nabla$ on $J_\lambda(q,t^{-1})$.
It implies \cite[Proposition 5.4.9]{H:2003} that 
\begin{align}
\label{eq:nabla-L}
 \Phi( - \otimes  \cal{L}) = \nabla \Phi(-).
\end{align}

More generally we can interpret the operator $\Delta_{f}$ 
\eqref{eq:Delta_f} on $\Lambda_{\bb{F}}$ in terms of geometry.
Let us recall the Schur functor.
For a partition $\lambda$ of $n$,
the Schur functor $S^{\lambda}$ on the category of $S_n$-modules 
is defined by
$$
 S^{\lambda}W := \Hom_{S_n}(V^{\lambda},W^{\otimes n}),
$$
where $V^{\lambda}$ is the irreducible representation of $S_n$ 
corresponding to the partition $\lambda$.
Now Haiman showed \cite[Proposition 5.4.9]{H:2003} that 
\begin{align}\label{eq:Delta-SB}
 \Phi( - \otimes  S^{\lambda}\cal{B}) = \Delta_{s_\lambda} \Phi(-).
\end{align}
Here $s_\lambda \in \Lambda^n$ is the Schur function.
The case $\lambda=(1^n)$ corresponds to the operator $\nabla$ since $s_{(1^n)} = e_n$.

\subsection{Geometric meaning of the Whittaker vector}
\label{ssec:geom:whit}

Let us rewrite our result in terms of the geometry of the Hilbert scheme $X^{[n]}$.
The definition \eqref{eq:HaimanH} of the modified Macdonald polynomial 
implies  that we should replace $t$ with $t^{-1}$ and 
apply the plethystic transformation $p_r \mapsto p_r/(1-t^{-r})$ 
to our result.
The transformed formula will be denoted by the bold symbol.

We start with the series $v^{(0)}(w)$.
\begin{align*}
\mathbf{v}^{(0)}(w) 
 = \exp\Bigl(\sum_{n>0} \dfrac{1}{(1-q^{n})(1-t^{n})}\dfrac{p_n}{n} w^{n}\Bigr).
\end{align*}
In terms of the modified Macdonald polynomials, we have 
$$
 \mathbf{v}^{(0)}(w) 
 = \sum_{\lambda} w^{|\lambda|} 
   \dfrac{\widetilde{H}_\lambda(q,t)}{c_\lambda(q^{-1},t) c'_\lambda(q,t^{-1})},
$$
which is obtained by specializing Cauchy kernel formula as in Lemma \eqref{eq:v0}.
In terms of the geometry, this expansion has the following meaning.

\begin{lem}
\begin{align}\label{eq:v0:geom}
\mathbf{v}^{(0)}(w) = \sum_{n\ge0} w^n \Phi(\cal{O}_{X^{[n]}}).
\end{align}
\end{lem}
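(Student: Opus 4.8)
The plan is to compute $\Phi(\cal{O}_{X^{[n]}})$ directly by equivariant localization and match it against the modified Macdonald expansion of $\mathbf{v}^{(0)}(w)$ recorded just above. First I would note that specializing the Cauchy kernel exactly as in the proof of \eqref{eq:v0} (now in the variables appropriate to $\widetilde{H}_\lambda$) gives
\[
 \mathbf{v}^{(0)}(w) = \sum_{\lambda} w^{|\lambda|}
   \dfrac{\widetilde{H}_\lambda(q,t)}{c_\lambda(q^{-1},t)\, c'_\lambda(q,t^{-1})},
\]
so it suffices to show that the degree-$n$ coefficient satisfies
$\Phi(\cal{O}_{X^{[n]}}) = \sum_{\lambda \vdash n} \widetilde{H}_\lambda(q,t)/\bigl(c_\lambda(q^{-1},t)\,c'_\lambda(q,t^{-1})\bigr)$.

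Next I would apply the Atiyah--Bott--Lefschetz localization theorem in the $K$-group $K_{\bb{T}}(X^{[n]})$ localized over $\mathrm{Frac}(R(\bb{T}))\cong\bb{F}$. Since the $\bb{T}$-fixed points $I_\lambda$ are isolated, localization expands any class in the fixed-point basis $\{[I_\lambda]\}$, with denominators given by $\wedge_{-1}$ of the cotangent spaces. For the structure sheaf the restriction $\cal{O}_{X^{[n]}}|_{I_\lambda}$ is the structure sheaf of the reduced point $I_\lambda$, i.e. the trivial one-dimensional $\bb{T}$-module of character $1$, so
\[
 [\cal{O}_{X^{[n]}}] = \sum_{\lambda \vdash n}
  \dfrac{1}{\ch \wedge_{-1} T^*_{I_\lambda} X^{[n]}}\, [I_\lambda].
\]
I would then substitute the character formula \eqref{eq:ch:T}, namely
$\ch \wedge_{-1} T^*_{I_\lambda} X^{[n]} = c_\lambda(q^{-1},t)\,c'_\lambda(q,t^{-1})$, into the denominators.

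Finally I would apply $\Phi$, which is $\bb{F}$-linear on the localized $K$-group (being built from the push-forward to a point and the Frobenius map) and sends each fixed-point class to $\Phi([I_\lambda]) = \widetilde{H}_\lambda$. Since the scalar coefficients lie in $\bb{F}$, applying $\Phi$ term by term yields
\[
 \Phi(\cal{O}_{X^{[n]}}) = \sum_{\lambda \vdash n}
  \dfrac{\widetilde{H}_\lambda(q,t)}{c_\lambda(q^{-1},t)\, c'_\lambda(q,t^{-1})},
\]
which is exactly the degree-$n$ coefficient of $\mathbf{v}^{(0)}(w)$, establishing \eqref{eq:v0:geom}. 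The only real subtlety is bookkeeping of conventions: one must confirm that the \emph{cotangent} space enters the localization denominator so that \eqref{eq:ch:T} applies verbatim, and that the parameter conventions ($q^{-1}$ versus $q$, $t$ versus $t^{-1}$) appearing in \eqref{eq:ch:T}, in Haiman's normalization $\Phi([I_\lambda]) = \widetilde{H}_\lambda$, and in the definition of $\mathbf{v}^{(0)}(w)$ are mutually consistent. Once these are aligned the computation is immediate.
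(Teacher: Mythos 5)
Your proposal is correct and follows essentially the same route as the paper: both expand $[\cal{O}_{X^{[n]}}]$ over the $\bb{T}$-fixed points via the Atiyah--Bott--Lefschetz localization formula, substitute the cotangent character \eqref{eq:ch:T} into the denominators, and apply $\Phi$ using $\Phi([I_\lambda])=\widetilde{H}_\lambda$ to match the modified-Macdonald expansion of $\mathbf{v}^{(0)}(w)$. Your write-up is in fact somewhat more explicit than the paper's (which compresses the final steps into one sentence), and your closing remark about checking the cotangent/parameter conventions is exactly the right point of care.
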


\begin{proof}
In fact, by the Atiyah-Bott-Lefschetz localization formula, we have
\begin{align}\label{eq:ABL}
 \iota_*^{-1}[-] = 
 \sum_{\lambda \,\vdash n} \dfrac{\iota_{\lambda}^*[-]}{\wedge_{-1}T_{I_\lambda}^*X^{[n]}},
\end{align}
Here some explanations are in order.
$\iota: (X^{[n]})^{\bb{T}} \hookrightarrow X^{[n]}$ is the inclusion 
of the $\bb{T}$-fixed points 
and $\iota_{\lambda}: \{I_\lambda\} \hookrightarrow X^{[n]}$ is the inclusion 
of each $\bb{T}$-fixed point $I_\lambda$.
The push-forward 
$\iota_* : K_{\bb{T}}((X^{[n]})^{\bb{T}}) \longrightarrow K_{\bb{T}}(X^{[n]})$
induces isomorphism after localization due to Thomason's result.
Thus in \eqref{eq:ABL} $\iota_*^{-1}$ is well-defined.
(For more accounts, see \cite[\S4]{NY}.)

Then one can show the formula \eqref{eq:v0:geom}
by applying $\Phi$ on both sides of \eqref{eq:ABL} and using \eqref{eq:ch:T}.
\end{proof}

Next we turn to 
$$
 v_n(u) = \sum_{r\ge0}u^r \Delta_{h_r}v_n^{(0)}.
$$
Let us apply Haiman's result \eqref{eq:Delta-SB} 
in the case  $\lambda = (n)$.
Then since $s_{(n)} = h_n$ and by the previous Lemma,
we immediately have

\begin{prop}
\begin{align*}
 \mathbf{v}_n(u) 
 = \sum_{r\ge0} u^r \Delta_{h_r} \mathbf{v}_n^{(0)}
 = \sum_{r\ge0} u^r \Phi(S^n\cal{B}).
\end{align*}
\end{prop}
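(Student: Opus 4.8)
The plan is to read off both equalities directly from the definition of $\mathbf{v}_n(u)$ together with the two facts already established: the geometric expansion \eqref{eq:v0:geom} of the series $\mathbf{v}^{(0)}(w)$ and Haiman's operator identity \eqref{eq:Delta-SB}. The first equality is simply the bold (plethystically transformed) form of the definition $v_n(u)=\sum_{r\ge0}u^r\Delta_{h_r}v_n^{(0)}$, so the whole statement reduces to the second equality. Since this is an identity of formal power series in $u$, I would extract the coefficient of $u^r$ and reduce everything to the single termwise claim $\Delta_{h_r}\mathbf{v}_n^{(0)}=\Phi(S^r\cal{B})$ for each $r\ge0$, where $S^r$ denotes the $r$-th symmetric power.

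For this termwise claim I would first extract the coefficient of $w^n$ in \eqref{eq:v0:geom}, giving $\mathbf{v}_n^{(0)}=\Phi(\cal{O}_{X^{[n]}})$. Next I would apply \eqref{eq:Delta-SB} to the one-row partition $\lambda=(r)$, for which $s_{(r)}=h_r$ and the Schur functor $S^{(r)}$ is the $r$-th symmetric power $S^r$; thus \eqref{eq:Delta-SB} becomes $\Phi(-\otimes S^r\cal{B})=\Delta_{h_r}\Phi(-)$. Setting $-=\cal{O}_{X^{[n]}}$ and using $\cal{O}_{X^{[n]}}\otimes S^r\cal{B}=S^r\cal{B}$ then yields
\[
 \Delta_{h_r}\mathbf{v}_n^{(0)}
 =\Delta_{h_r}\Phi(\cal{O}_{X^{[n]}})
 =\Phi(\cal{O}_{X^{[n]}}\otimes S^r\cal{B})
 =\Phi(S^r\cal{B}),
\]
which is exactly what is needed; summing against $u^r$ completes the proof.

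The only point requiring care---hence the main obstacle---is the compatibility of normalizations between the two worlds: one must verify that the operator $\Delta_{h_r}$ entering the bold $\mathbf{v}_n(u)$ really is the operator $\Delta_{s_{(r)}}$ appearing in \eqref{eq:Delta-SB}. Concretely, under the bold transformation ($t\mapsto t^{-1}$ followed by $p_r\mapsto p_r/(1-t^{-r})$) the operator $\Delta_{h_r}$ should act on the modified Macdonald function $\widetilde{H}_\lambda$ by multiplication by $h_r[B_\lambda(q,t^{-1})]$, and by \eqref{eq:chB} this equals $h_r[\ch\cal{B}_{I_\lambda}]$, the $\bb{T}$-character of the fibre of $S^r\cal{B}$ at the fixed point $I_\lambda$. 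This is precisely the multiplication realized geometrically by $-\otimes S^r\cal{B}$, so the two operators coincide. I expect this to be routine plethystic bookkeeping rather than a genuine difficulty, which is why, once \eqref{eq:v0:geom} and \eqref{eq:Delta-SB} are granted, the proposition follows immediately.
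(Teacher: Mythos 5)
Your proof is correct and follows essentially the same route as the paper: the paper likewise combines the lemma $\mathbf{v}_n^{(0)}=\Phi(\cal{O}_{X^{[n]}})$ (the coefficient of $w^n$ in \eqref{eq:v0:geom}) with Haiman's identity \eqref{eq:Delta-SB} applied to a one-row partition, using $s_{(r)}=h_r$. Your coefficient-wise formulation $\Delta_{h_r}\mathbf{v}_n^{(0)}=\Phi(S^r\cal{B})$ is the intended reading; the $S^n\cal{B}$ in the displayed proposition (and the ``$\lambda=(n)$'' in the surrounding sentence) appears to be a typo for $S^r\cal{B}$ and $\lambda=(r)$, exactly as your argument indicates.
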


\begin{rmk}
We can also interpret $v_n^{(\infty)}$ \eqref{eq:vinf} using the determinant line bundle $\cal{L}$.
Recall that we have $v_n^{(\infty)} = \nabla^{-1} v_n^{(0)}$ by \eqref{eq:vinf-v0}.
Recall also Haiman's result \eqref{eq:nabla-L}. Thus, we have
$$
 \mathbf{v}^{(\infty)}(w) = \nabla^{-1} \mathbf{v}^{(\infty)}(w) 
 = \sum_{n\ge0} w^n \Phi(\cal{L}^*).
$$
Here $\cal{L}^*$ dentos the dual bundle of $\cal{L}$.
\end{rmk}

\subsection{Geometric interpretation of $\eta_1$ }
\label{ssec:goem:uqt}

Since we have succeeded in the description of $v_{G,n}$ 
via the geometry of Hilbert schemes of points,
it is natural to seek an interpretation of the operators $T_d$, 
$\eta_d$, or $D_d$ in terms of geometry.
However, there seem no simple interpretation of those operators for general $d$.
In the case $d=1$, a simple geometric description for $\eta_1$ is already known 
by \cite{FT,SV}.
Let us briefly give some comments.

In the previous subsections we considered the equivariant $K$-group 
$K_{\bb{T}}(X^{[n]})$ with fixed $n$.
If we want to interpret operators like $T_d$,
they should be correspond to the operators 
$K_{\bb{T}}(X^{[n]}) \to K_{\bb{T}}(X^{[n-d]})$ for any $n$.
Thus it is natural to consider the direct sum 
$\oplus_{n\ge0} K_{\bb{T}}(X^{[n]})$, 
which is identified with the whole Fock space $\cal{F}$ via $\Phi$.

The idea of geometric description of operators acting on the Heisenberg Fock space 
goes back to Nakajima's work \cite{N:1997,N:1999,N:2014},
which realized the Heisenberg Fock space on the equivariant homology groups 
$\oplus_{n\ge0} H^{\bb{T}}_{*}(X^{[n]})$.
The generating operators of Heisenberg algebra were realized by Hecke correspondences.
These correspondences are associated to the nested Hilbert schemes $X^{[n,n + d]}$.
For $d \ge 0$,
$X^{[n,n + d]}$ is a reduced closed subscheme of $X^{[n]} \times X^{[n+d]}$
parameterizing pairs $(I,J)$ of ideals 
in $\bb{C}[x,y]$ such that $J \subset I$.
The definition for $d \le 0$ is similar.
Then the associated Hecke correspondence yields an operator
$H^{\bb{T}}_{*}(X^{[n]}) \to H^{\bb{T}}_{*}(X^{[n-d]})$.

The works \cite{FT,SV} are natural $K$-theoretic analogue of Nakajima's work.
Let us describe the $K$-theoretic Heck correspondences in detail.
For smooth quasi-projective varieties $Y_1,Y_2,Y_3$ 
with action of a complex algebraic group $G$,
Assume that $Y_i$'s are proper over another quasi-projective $G$-variety $Y$. 
Then the convolution product is the map 
\begin{align*}
\begin{array}{ccc}
  K_{G}(Y_1 \times_Y Y_2) \otimes_{R(G)} 
  K_{G}(Y_2 \times_Y Y_3) 
& \longrightarrow
& K_{G}(Y_1 \times_Y Y_2)  \\
 ([c_1],[c_2])
& \longmapsto
& [\mathbf{R}p_{13*}(p_{12}^*(c_1) \otimes^{\mathbf{L}} p_{23}^*(c_2)].
\end{array}
\end{align*}
Here $p_{ij}$ is the projection.
We want to consider the case $Y_i = X^{[n_i]}$, 
$Y = \{\mathrm{pt}\}$ and $G = \bb{T}$,
although it is not possible since $X^{[n]}$ is not proper.
Considering the localized $K$-groups instead, we have the well-defined product
\begin{align*}
 K_{\bb{T}}(X^{[n_1]} \times X^{[n_2]})_{\loc} \otimes_{R(\bb{T})} 
 K_{\bb{T}}(X^{[n_2]} \times X^{[n_3]})_{\loc} 
 \longrightarrow
 K_{\bb{T}}(X^{[n_1]} \times X^{[n_3]})_{\loc}. 
\end{align*}
Here we defined 
$ K_{\bb{T}}(-)_{\loc} := K_{\bb{T}}(-)\otimes_{R(\bb{T})} \mathrm{Frac}(R(\bb{T}))
 \cong K_{\bb{T}}(-)\otimes_{R(\bb{T})} \bb{F}$. 
Under this product,
the $\bb{F}$-vector space 
$E := \oplus_{k \in \bb{Z}} \prod_{n} K_{\bb{T}}(X^{[n+k]} \times X^{[n]})_{\loc}$
has an associative algebra structure.
The same formula with $n_3=0$ implies that 
the $\bb{F}$-vector space 
$V := \oplus_{n \ge 0} K_{\bb{T}}(X^{[n]})$
is an $E$-module.

Consider the tautological bundles $\cal{B}_{n+1,n}$, $\cal{B}_{n,n}$ 
and $\cal{B}_{n,n+1}$ of the nested Hilbert schemes  
$X^{[n+1,n]}$, $X^{[n,n]}$ and $X^{[n,n + 1]}$.
Set 
$$
 u_{-1,l} := -q^{1/2} \prod_{n\ge0} [\cal{B}_{n,n+1}^{\otimes l}],\qquad 
 u_{ 1,l} :=  t^{1/2} \prod_{n\ge0} [\cal{B}_{n+1,n}^{\otimes l-1}]
$$
for $l\in\bb{Z}$ and 
$$
 u_{0, k} := \prod_{n\ge0} \psi_k([\cal{B}_{n,n}])-\dfrac{1}{(1-q^k)(1-t^k)},\qquad
 u_{0,-k} := -\prod_{n\ge0} \psi_k([\cal{B}_{n,n}^*])+\dfrac{1}{(1-q^{-k})(1-t^{-k})}
$$
for $k \in \bb{Z}_{\ge 1}$.
$\psi_k$ denotes the $k$-th Adams operation \cite{A} of $K$-theory. 
Then one can consider the subalgebra $\cal{U}$ of $E$ generated by these elements.

The main result of \cite{FT,SV} is that $\cal{U}$ is isomorphic to 
the Ding-Iohara-Miki algebra $\cal{U}(q,t^{-1})$,
and $V$ is isomorphic to the Fock representation.
The generators $u_{\pm1,d}$ and $u_{0,d}$ coincide 
with $u_{r,d}$ in the Hall algebra description explained at \eqref{eq:uqt:SV}.

As an immediate consequence, $\eta_1 = u_{-1,0}$ corresponds to 
(the $K$-group class of) the structure sheaf $\prod_{n} [\cal{O}_{X^{[n+1,n]}}]$.
 
A technical point is that the generating elements are constructed from 
the nested Hilbert scheme $X^{[n,n + d]}$ with $d=0,\pm1$.
Therefore, although we can construct operators  with $|d|>1$ 
as a convolution product of operators of $|d|=1$,
it is not always possible to obtain a simple formula.

%
%
%

\appendix
\section{Combinatorial properties of $T_d v_{G,n}$}

In the main text we proved the Whittaker condition of $v_{G,n}$ 
by computing formulas such as $\eta_d v_{G,n}$ and $D_d v_{G,n}$.
In this appendix we study properties of $T_d v_{G,n}$ directly.

\subsection{Duality of Macdonald symmetric functions and its consequence}
\label{ssec:app:dual}

Macdonald symmetric functions enjoy beautiful properties of dualities,
which was observed in \cite[Chap. VI, \S 5]{M}.
Recall the automorphism
$$
 \omega_{q,t}: p_n \longmapsto (-1)^{n-1}\dfrac{1-q^n}{1-t^n}p_n
$$
on the space $\Lambda_{\bb{F}}$ of symmetric functions,
which was introduced in \cite[Chap. VI, (2.14)]{M}.
Its inverse is given by
$$
 \omega_{t,q}: p_n \longmapsto (-1)^{n-1}\dfrac{1-t^n}{1-q^n}p_n.
$$
As proved in \cite[Chap. VI, \S 5]{M},
the automorphism $\omega_{q,t}$ acts on $P_\lambda(q,t)$ as
\begin{equation}
\label{eq:invol:P}
 \omega_{q,t} P_\lambda[X;q,t] = Q_{\lambda'}[X;t,q]. 
\end{equation}
Let us also recall the following properties
(see  \cite[Chap. VI, \S 4]{M}).
\begin{equation}
\label{eq:inverse:P}
 P_\lambda[X;1/q,1/t] = P_{\lambda}[X;q,t],\quad
 Q_\lambda[X;1/q,1/t] = (q/t)^{|\lambda|} Q_{\lambda}[X;q,t].
\end{equation}

Now we introduce another automorphism $\iota_{q,t}$ on  $\Lambda_{\bb{F}}$.
$$
 \iota_{q,t}: f[X;q,t] \longmapsto f[X;1/t,1/q].
$$
Following the idea and the notation in \cite{BGHT}, 
we define the operator 
\begin{align*}
 \downarrow := \iota_{q,t}\omega_{q,t}.
\end{align*}
Using the plethystic notation,
the action of $\downarrow$ on a symmetric function $f[X;q,t]$ can be written as 
\begin{align*}
 \downarrow f[X;q,t] = 
 f[-\ep X \tfrac{1-1/t}{1-1/q}; 1/t,1/q]
\end{align*}
Here $\ep$ is the plethystic minus symbol \cite{BGHT,H:2003,N:2012}.
(Recall that we have $p_n[\ep X] = (-1)^n p_n[X]$.)
By \eqref{eq:invol:P} and \eqref{eq:inverse:P} we have
\begin{align}
\label{eq:Jlow}
 \downarrow J_\lambda[X;q,t] 
 = (-t)^{|\lambda|} q^{n(\lambda')} t^{n(\lambda)} 
   J_{\lambda'}[X;q,t].
\end{align}

Define $v_{G,\lambda}$ by the formula 

\begin{align*}
 v_{G,\lambda} :=
 P_{\lambda}(q,t) \gamma_\lambda,
\end{align*}
where $\gamma_\lambda$ is given in \eqref{eq:AY}.
We have $v_{G,n} = \sum_{\lambda\,\vdash n} v_{G,\lambda}$.

\begin{lem}
\label{lem:vlow}
We have
\begin{align*}
\downarrow v_{G,\lambda} = (-q)^{|\lambda|} v_{G,\lambda'}
\end{align*}
for any partition $\lambda$.
Therefore we also have 
\begin{align*}
\downarrow v_{G,n} = (-q)^{n} v_{G,n}
\end{align*}
for any $n \in \bb{Z}_{\ge 0}$.
\end{lem}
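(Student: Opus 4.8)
The plan is to reduce the first identity, for a fixed $\lambda$, to a single scalar (coefficient) identity, and then to deduce the graded statement by summing over $\lambda \vdash n$. The essential point is that $\downarrow = \iota_{q,t}\omega_{q,t}$ acts on the Macdonald functions by transposition in a controlled way, while $\gamma_\lambda$ is only a scalar. First I would record how $\downarrow$ acts on $P_\lambda$: by \eqref{eq:invol:P} we have $\omega_{q,t}P_\lambda[X;q,t]=Q_{\lambda'}[X;t,q]$, and applying $\iota_{q,t}$ (which sends the pair of parameters $(t,q)$ to $(1/q,1/t)$) together with the second formula in \eqref{eq:inverse:P} gives $\downarrow P_\lambda = (q/t)^{|\lambda|}Q_{\lambda'}[X;q,t]$. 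The same conclusion can be extracted from \eqref{eq:Jlow} after passing to $J_\lambda$ via \eqref{eq:JPQ}, but the $P$/$Q$ route is the most economical.

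Second, I would use that $\downarrow$ is not $\bb{F}$-linear but $\iota_{q,t}$-semilinear: since $\omega_{q,t}$ is an $\bb{F}$-algebra automorphism and $\iota_{q,t}$ is the field automorphism $q\mapsto 1/t,\ t\mapsto 1/q$, one has $\downarrow(c\,f)=\iota_{q,t}(c)\,\downarrow\! f$ for a scalar $c$ and $f\in\Lambda_{\bb{F}}$; here I would treat the Fock highest weight $k$ as an independent parameter fixed by $\iota_{q,t}$. Thus $\downarrow v_{G,\lambda}=\iota_{q,t}(\gamma_\lambda)\,(q/t)^{|\lambda|}Q_{\lambda'}(q,t)$. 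Rewriting $Q_{\lambda'}=\bigl(c_{\lambda'}(q,t)/c'_{\lambda'}(q,t)\bigr)P_{\lambda'}$ via \eqref{eq:JPQ} and comparing with $(-q)^{|\lambda|}v_{G,\lambda'}=(-q)^{|\lambda|}\gamma_{\lambda'}P_{\lambda'}$, the whole statement collapses (after cancelling $P_{\lambda'}$) to the scalar identity $\iota_{q,t}(\gamma_\lambda)\,(q/t)^{|\lambda|}c_{\lambda'}(q,t)/c'_{\lambda'}(q,t)=(-q)^{|\lambda|}\gamma_{\lambda'}$.

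Third, I would verify this scalar identity box-by-box. I would factor $\gamma_\lambda$ into its $k$-part $\prod_{s}k/(1-k^2q^{j(s)}t^{-i(s)})$ and its arm-leg part $\prod_{s}q^{a_\lambda(s)}/c'_\lambda(s;q,t)$. The $k$-part behaves transparently: $\iota_{q,t}$ sends $q^{j(s)}t^{-i(s)}$ to $q^{i(s)}t^{-j(s)}$, which under the transposition bijection $s\mapsto s'$ is exactly the $k$-factor of $\gamma_{\lambda'}$, so $\iota_{q,t}$ of the $k$-part equals the $k$-part of $\gamma_{\lambda'}$. For the arm-leg part the computation $\iota_{q,t}(c'_\lambda(s;q,t))=1-t^{-1-a_\lambda(s)}q^{-l_\lambda(s)}$ together with the factorization $1-t^{-1-a}q^{-l}=-t^{-1-a}q^{-l}\,c_{\lambda'}(s';q,t)$ (using $a_{\lambda'}(s')=l_\lambda(s)$ and $l_{\lambda'}(s')=a_\lambda(s)$, see \eqref{eq:cc's}) converts everything into monomials in $q,t$ times $c_{\lambda'}(q,t)^{-1}$; assembling the exponents via the standard identities $\sum_{s}a_\lambda(s)=n(\lambda')$ and $\sum_{s}l_\lambda(s)=n(\lambda)$ then matches both sides. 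I expect this bookkeeping of signs and of the $q,t$-powers to be the only real obstacle; it is routine but must be carried out attentively.

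Finally, the graded statement $\downarrow v_{G,n}=(-q)^n v_{G,n}$ follows at once: summing the per-diagram identity $\downarrow v_{G,\lambda}=(-q)^{|\lambda|}v_{G,\lambda'}$ over $\lambda\vdash n$ and using that $\lambda\mapsto\lambda'$ is an involution on the partitions of $n$ gives $\downarrow v_{G,n}=(-q)^n\sum_{\lambda\vdash n}v_{G,\lambda'}=(-q)^n v_{G,n}$.
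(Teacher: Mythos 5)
Your proof is correct and follows essentially the same route as the paper: both reduce the claim to a scalar identity for $\gamma_\lambda$ by using the duality $\omega_{q,t}P_\lambda[X;q,t]=Q_{\lambda'}[X;t,q]$ together with the parameter inversion \eqref{eq:inverse:P} (the paper phrases this through the integral form via \eqref{eq:AY:J} and \eqref{eq:Jlow}, you through $P$ and $Q$, which is only a change of normalization). Your box-by-box verification of the resulting scalar identity checks out, including the key observations that the $k$-part of $\gamma_\lambda$ and the parameter $k^2q/t$ are fixed by $\iota_{q,t}$ and that $a$ and $l$ swap under transposition.
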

\begin{proof}
By simple calculations using \eqref{eq:AY:J} and \eqref{eq:Jlow}.
Note that the parameter $k q/t$ is unchanged under the operation $\downarrow$.
\end{proof}

Let us recall the deformed Virasoro operators in the bosonized form.
\begin{align*}
&T(z) = \sum_{d \in \bb{Z}} T_d z^{-d},\quad
 T_d  = (q/t)^{d/2+1}  k      \Lambda^{+}_{d} 
      + (q/t)^{-d/2-1} k^{-1} \Lambda^{-}_{d} , 
\\
& \Lambda^{\pm}(z) = \sum_{d \in \bb{Z}} \Lambda^{\pm}_d z^{-d}
  = \exp\Bigl(
         \pm \sum_{n=1}^{\infty}
            \dfrac{1-t^{-n}}{1+(q/t)^n} \dfrac{p_{n}[X]}{n} z^{n}
         \Bigr)
     \exp\Bigl(
         \mp \sum_{n=1}^{\infty}
            (1-q^n) \partial_{p_n[X]} z^{-n}
         \Bigr).
\end{align*}

We have the following symmetry between $\Lambda^{\pm}(z)$.
\begin{lem}
\label{lem:Llow}
For an element $f[X;q,t] \in \Lambda_{\bb{F}}$ we have
\begin{align*}
 \downarrow \Lambda^{\pm}(z) \downarrow f[X;q,t] = \Lambda^{\pm}(\ep t z) f[(q/t)X;q,t]. 
\end{align*}
\end{lem}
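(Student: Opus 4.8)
The plan is to read $\downarrow=\iota_{q,t}\omega_{q,t}$ as a \emph{conjugating automorphism} and to compute how it transforms the two elementary building blocks out of which $\Lambda^{\pm}(z)$ is assembled: multiplication by $p_n$ and the derivation $\partial_{p_n}$. Since $\downarrow$ is an algebra automorphism that is only semilinear over $\bb{F}$ (it twists the scalars by $q\mapsto 1/t$, $t\mapsto 1/q$), the plethystic description of $\downarrow$ recorded just before \eqref{eq:Jlow} gives at once
\begin{align*}
 \downarrow p_n
 &= (-1)^{n-1}\dfrac{1-t^{-n}}{1-q^{-n}}p_n
 = (-1)^{n-1}(q/t)^n\dfrac{1-t^n}{1-q^n}p_n =: c_n\,p_n .
\end{align*}
A short computation then yields $\downarrow^2 p_n=(q/t)^n p_n$, that is, $\downarrow^2$ is exactly the plethystic scaling $f\mapsto f[(q/t)X]$. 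This already accounts for the factor $f[(q/t)X]$ on the right-hand side: writing $\downarrow\Lambda^{\pm}(z)\downarrow=\bigl(\downarrow\Lambda^{\pm}(z)\downarrow^{-1}\bigr)\downarrow^{2}$ reduces the statement to the genuine conjugation identity $\downarrow\Lambda^{\pm}(z)\downarrow^{-1}=\Lambda^{\pm}(\ep t z)$, which is then composed with the scaling $\downarrow^{2}$.

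Next I would treat the derivation part. For any automorphism of $\Lambda_{\bb{F}}$ acting diagonally as $p_n\mapsto c_n p_n$ one has, by testing a derivation on the generators $p_m$, the relation $\downarrow\partial_{p_n}\downarrow^{-1}=c_n^{-1}\partial_{p_n}$. Thus both $p_n$ and $\partial_{p_n}$ are scaled diagonally, so $\downarrow$ preserves the factorization $\Lambda^{\pm}(z)=\Lambda^{\pm}_{+}(z)\,\Lambda^{\pm}_{-}(z)$ into its creation half (the $\exp$ in the $p_n$'s) and its annihilation half (the $\exp$ in the $\partial_{p_n}$'s), and it suffices to conjugate each half separately. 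Conjugating an exponential amounts to applying $\downarrow$ to the scalar coefficient of each mode and multiplying by the mode-scaling factor: the creation coefficient $\tfrac{1-t^{-n}}{1+(q/t)^n}\tfrac{z^n}{n}$ becomes $\downarrow\!\bigl(\tfrac{1-t^{-n}}{1+(q/t)^n}\bigr)\tfrac{z^n}{n}\,c_n$, while the annihilation coefficient $(1-q^n)z^{-n}$ becomes $\downarrow\!\bigl((1-q^n)\bigr)z^{-n}\,c_n^{-1}$. Simplifying each of these, the factors $1-q^n$ and $1-t^n$ cancel against the corresponding pieces of $c_n^{\pm1}$, so that every mode picks up the \emph{same} substitution $z\mapsto \ep t z$; the surviving power of $q/t$ per mode is precisely the one absorbed by $\downarrow^{2}$, and the two halves reassemble into $\Lambda^{\pm}(\ep t z)$.

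The hard part will be the exact bookkeeping of the powers of $q/t$ and the signs $(-1)^{n}$. The creation and annihilation halves each acquire a power of $q/t$ with opposite signs in the exponent of $z$, and one must verify that they reassemble \emph{coherently} into a single shift of the spectral parameter rather than two incompatible shifts, and that the residual operator is exactly the advertised rescaling $f\mapsto f[(q/t)X]$ rather than $f\mapsto f[(t/q)X]$. I would carry this out mode by mode, using $1-t^{-n}=-t^{-n}(1-t^n)$ together with the $\downarrow$-invariance of $1+(q/t)^n$ and of $(q/t)^n$ to collapse the coefficients. As an independent sanity check I would evaluate both sides on the vacuum $1_{\cal{F}}$, where the annihilation half acts trivially and only the explicit exponential in the $p_n$'s survives, and compare with the normalization forced by the $\downarrow$-action on $J_\lambda$ in \eqref{eq:Jlow}.
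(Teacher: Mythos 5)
Your framework --- computing $\downarrow p_n=c_n p_n$ with $c_n=(-1)^{n-1}(q/t)^n\tfrac{1-t^n}{1-q^n}$, deducing $\downarrow\partial_{p_n}\downarrow^{-1}=c_n^{-1}\partial_{p_n}$, and identifying $\downarrow^2$ with the scaling $\sigma\colon f\mapsto f[(q/t)X]$ --- is sound, and is essentially the mode-by-mode version of the paper's plethystic computation. But the reduction you build on it rests on a false identity: $\downarrow\Lambda^{\pm}(z)\downarrow^{-1}$ is \emph{not} $\Lambda^{\pm}(\ep t z)$. Carrying out your own bookkeeping, the $n$-th creation coefficient becomes
$\iota_{q,t}\bigl(\tfrac{1-t^{-n}}{1+(q/t)^n}\bigr)\tfrac{z^n}{n}\,c_n
=\tfrac{(-1)^{n-1}(q/t)^n(1-t^n)}{n\,(1+(q/t)^n)}z^n
=\tfrac{1-t^{-n}}{n\,(1+(q/t)^n)}(\ep q z)^n$,
and the $n$-th annihilation coefficient becomes $(1-q^n)(\ep q z)^{-n}$; both halves therefore shift coherently to $z\mapsto \ep q z$, so the true conjugation identity is $\downarrow\Lambda^{\pm}(z)\downarrow^{-1}=\Lambda^{\pm}(\ep q z)$. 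Relative to your target $\Lambda^{\pm}(\ep t z)$, the creation modes carry an extra $(q/t)^{n}$ and the annihilation modes an extra $(t/q)^{n}$; factors of \emph{opposite} sign in the exponent cannot be absorbed by composing with $\sigma=\downarrow^{2}$ on the right, where your decomposition places it. They are absorbed only by \emph{conjugation}, i.e.\ $\Lambda^{\pm}(\ep q z)=\sigma\,\Lambda^{\pm}(\ep t z)\,\sigma^{-1}$. This is exactly the ``coherence'' worry you flag but do not resolve, and it is the place where the argument actually bites --- not the $(q/t)$ versus $(t/q)$ ambiguity you single out.

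The upshot is $\downarrow\Lambda^{\pm}(z)\downarrow=\Lambda^{\pm}(\ep q z)\circ\sigma=\sigma\circ\Lambda^{\pm}(\ep t z)$: the right-hand side of the lemma must be read as $\bigl(\Lambda^{\pm}(\ep t z)f\bigr)[(q/t)X]$, with the substitution applied to the \emph{output}. This is what the paper's plethystic computation produces, namely $\Omega\bigl[\pm q\ep\tfrac{1-1/t}{1+q/t}zX\bigr]f\bigl[(q/t)X\mp(\ep/t)\tfrac{1-q}{z}\bigr]$. Your argument, taken literally, would instead yield $\Lambda^{\pm}(\ep t z)\circ\sigma$, which is a genuinely different operator. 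Your own proposed sanity check exposes the discrepancy: on the vacuum, $\downarrow\Lambda^{+}(z)\downarrow 1=\exp\bigl(\sum_{n>0}\tfrac{1-t^{-n}}{n(1+(q/t)^n)}(\ep q z)^n p_n\bigr)$, whereas $\Lambda^{+}(\ep t z)1$ carries $(\ep t z)^n$ in place of $(\ep q z)^n$. To repair the proof, keep the mode-by-mode method but replace the claimed reduction by the correct identity $\downarrow\Lambda^{\pm}(z)\downarrow^{-1}=\Lambda^{\pm}(\ep q z)$ and then commute $\downarrow^{2}$ to the left of the vertex operator.
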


\begin{proof}
Following \cite{BGHT}, we introduce the plethystic symbol
\begin{align*}
 \Omega[X] := \exp\Bigl(\sum_{m>0} \dfrac{p_m[X]}{m}\Bigr).
\end{align*}
Then one can write the action of the operator $\Lambda^{\pm}(z)$ as
\begin{align*}
 \Lambda^{\pm}(z) f[X;q,t] = 
 \Omega\bigl[\pm \tfrac{1-1/t}{1+q/t} z X\bigr]
 f[X \mp \tfrac{1-q}{z};q,t]
\end{align*}
Then we have
\begin{align*}
  \downarrow \Lambda^{\pm}(z) \downarrow f[X;q,t]
&=\downarrow \Lambda^{\pm}(z) 
   f\bigl[ -\ep \tfrac{1-1/t}{1-1/q} X ;1/t,1/q\bigr]
\\
&=\downarrow 
    \Omega\bigl[\pm \tfrac{1-1/t}{1+q/t} z X\bigr]
    f\bigl[ -\ep \tfrac{1-1/t}{1-1/q} (X \mp \tfrac{1-q}{z}) ;1/t,1/q \bigr]
\\
&=\iota_{q,t}
   \Omega\bigl[ \mp \ep \tfrac{1-q}{1-t} \tfrac{1-1/t}{1+q/t} z X \bigr]
   f\bigl[ -\ep \tfrac{1-1/t}{1-1/q} (-\ep \tfrac{1-q}{1-t} X \mp \tfrac{1-q}{z}) ;1/t,1/q \bigr]
\\
&= \Omega\bigl[ \mp \ep \tfrac{1-1/t}{1-1/q} \tfrac{1-q}{1+q/t} z X \bigr]
   f\bigl[ -\ep \tfrac{1-q}{1-t} (-\ep \tfrac{1-1/t}{1-1/q} X \mp \tfrac{1-1/t}{z}) ;q,t \bigr]
\\
&= \Omega\bigl[\pm q \ep \tfrac{1-1/t}{1+q/t} z X \bigr]
   f\bigl[(q/t) X \mp (\ep /t) \tfrac{1-q}{z} ;q,t \bigr].
\end{align*}
Thus the consequence holds.
\end{proof}

By a simple calculation using Lemma \ref{lem:vlow} and Lemma \ref{lem:Llow},
we have

\begin{prop}
For each $d,n \in \bb{Z}_{\ge 0}$ we have 
\begin{align*}
 \downarrow \Lambda^{\pm}_d v_n = (-q)^{n-d} \Lambda^{\pm}_d v_n. 
\end{align*}
\end{prop}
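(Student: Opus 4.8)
The plan is to recognize the assertion as an eigenvector statement: I want to show that $\Lambda^{\pm}_d v_n$ is a $\downarrow$-eigenvector with eigenvalue $(-q)^{n-d}$, by feeding the eigenvector property of $v_n$ (Lemma \ref{lem:vlow}) into the intertwining relation for $\Lambda^{\pm}$ (Lemma \ref{lem:Llow}). First I would record the two inputs I need. Lemma \ref{lem:vlow} gives $\downarrow v_n = (-q)^{n} v_n$; since $\downarrow = \iota_{q,t}\omega_{q,t}$ is a composite of automorphisms of $\Lambda_{\bb{F}}$ it is invertible, so applying $\downarrow^{-1}$ to the eigenrelation also yields $\downarrow^{-1} v_n = (-q)^{-n} v_n$. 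I would moreover use that $v_n$ is homogeneous of degree $n$, which for the scalar $q/t$ gives the rescaling identity $v_n[(q/t)X] = (q/t)^{n} v_n$.

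Next I would work with the generating current $\Lambda^{\pm}(z) = \sum_{d} \Lambda^{\pm}_d z^{-d}$ and apply Lemma \ref{lem:Llow} to the particular element $f = \downarrow^{-1} v_n = (-q)^{-n} v_n$, chosen precisely so that $\downarrow f = v_n$. The left-hand side then reads $\downarrow \Lambda^{\pm}(z) v_n$, while the right-hand side is the twisted operator of Lemma \ref{lem:Llow} applied to $f[(q/t)X;q,t]$. Using the homogeneity identity to extract the factor $(q/t)^{n}$ from $f[(q/t)X]$, together with the scalar $(-q)^{-n}$ coming from $f = \downarrow^{-1}v_n$, the right-hand side collapses to a scalar multiple of $\Lambda^{\pm}$ evaluated at the $\ep$-rescaled spectral variable of Lemma \ref{lem:Llow}, acting on $v_n$ itself. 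This produces an identity of Laurent series in $z$ valued in $\cal{F}$ in which all the $v_n$-dependence has been isolated.

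Finally I would extract the coefficient of $z^{-d}$ from both sides. On the left this is exactly $\downarrow \Lambda^{\pm}_d v_n$; on the right, the $\ep$-rescaling of the spectral variable multiplies the $d$-th mode $\Lambda^{\pm}_d$ by an explicit monomial in $q$ and $t$ carrying a sign, so the coefficient is this monomial times $(-q)^{-n}(q/t)^{n}\,\Lambda^{\pm}_d v_n$. The heart of the argument, and the step I expect to be the main obstacle, is then purely a matter of bookkeeping: one must check that the accumulated factors—the eigenvalue $(-q)^{-n}$, the homogeneity factor $(q/t)^{n}$, and the mode monomial from the variable rescaling—combine to the single scalar $(-q)^{n-d}$. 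This is where the plethystic sign $\ep$ and the competing powers of $q$ and $t$ must be handled with care, and it is the only place a miscount could hide; I would therefore verify it by tracking the exponent of $q$, the exponent of $t$, and the overall sign separately. Granting this power count, the coefficient identity $\downarrow \Lambda^{\pm}_d v_n = (-q)^{n-d}\Lambda^{\pm}_d v_n$ follows for all $d,n \in \bb{Z}_{\ge 0}$.
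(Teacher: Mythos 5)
Your overall strategy is the same as the paper's (the paper itself only says ``by a simple calculation using Lemma~\ref{lem:vlow} and Lemma~\ref{lem:Llow}''), but there is a genuine error in your first step that propagates into the final scalar. The operator $\downarrow = \iota_{q,t}\omega_{q,t}$ is \emph{not} $\bb{F}$-linear: $\iota_{q,t}$ substitutes $q\mapsto 1/t$, $t\mapsto 1/q$ in the coefficients as well as in the symmetric-function parameters (this is exactly why the paper remarks, in the proof of Lemma~\ref{lem:vlow}, that the parameter $kq/t$ is unchanged under $\downarrow$). Hence from $\downarrow v_n = (-q)^n v_n$ you may not conclude $\downarrow^{-1}v_n = (-q)^{-n}v_n$; applying $\downarrow^{-1}$ to the eigenrelation gives $v_n = (-1/t)^n\,\downarrow^{-1}v_n$, i.e.\ $\downarrow^{-1}v_n = (-t)^n v_n$. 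If you feed $f=(-q)^{-n}v_n$ into Lemma~\ref{lem:Llow} believing $\downarrow f = v_n$ (in fact $\downarrow f = (qt)^n v_n$), the accumulated scalar on the right-hand side becomes $(-q)^{-n}(q/t)^n = (-1)^n t^{-n}$ times the mode monomial, which cannot equal $(-q)^{n-d}$. With the corrected $f=(-t)^n v_n$ everything collapses cleanly: $(-t)^n(q/t)^n = (-q)^n$, and the $d$-th mode of the rescaled current contributes $(-q)^{-d}$, giving $(-q)^{n-d}$.

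One further caveat on the ``mode monomial'' you rightly flag as the delicate point: retracing the computation in the proof of Lemma~\ref{lem:Llow}, the rescaled spectral variable should be $\ep q z$ rather than the $\ep t z$ printed in its statement (match the factor $\Omega\bigl[\pm q\ep\tfrac{1-1/t}{1+q/t}zX\bigr]$ against $\Lambda^{\pm}(w)g$ with $g[X]=f[(q/t)X]$, which forces $w=q\ep z$). Only with $\ep q z$, whose $d$-th mode carries $(\ep q)^{-d}=(-q)^{-d}$, does the Proposition's eigenvalue come out; the printed $\ep t z$ would yield $(-q)^n(-t)^{-d}$. So your plan is salvageable and is indeed the intended one, but both the semilinearity of $\downarrow$ and the correction of the spectral variable must be put in place before the bookkeeping closes.
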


\subsection{Combinatorial formula for $T_{|\lambda|} J_{\lambda}(q,t)$}
\label{ssec:app:TnJn}

Unfortunately we don't have a nice method to calculate $T_d v_{G,n}$ directly.
However we can do some calculation in the case $d=n$.
To explain it, let us write down an explicit formula for 
$\Lambda^{\pm}_{|\lambda|} J_\lambda(q,t)$.

To compute $\Lambda^{+}_{|\lambda|}J_\lambda(q,t)$,
it is enough to calculate the coefficient of $z^{-|\lambda|}$ in 
\begin{align}\label{eq:app:L+J}
 \exp\Bigl(-\sum_{m>0}(1-q^m)\partial_{p_m} z^{-m}\Bigr)
 J_\lambda(q,t).
\end{align}
By the Macdonald inner product $\left<\cdot,\cdot\right>_{q,t}$
we have
\begin{align}\label{eq:app:ip}
\Bigl<1,
     \exp\Bigl(-\sum_{m>0}(1-q^m)\partial_{p_m} z^{-m}\Bigr)
     J_\lambda(q,t)
\Bigr>_{q,t}
=
\Bigl<\exp\Bigl(-\sum_{m>0}(1-t^m)\dfrac{p_m}{m} z^{-m}\Bigr),
      J_\lambda(q,t)
\Bigr>_{q,t}.
\end{align}
To write down the result, let us define $\tau_\lambda \in \bb{F}$ by 
$$
 \exp\Bigl(-\sum_{m>0}(1-t^m) \dfrac{p_m}{m} z^{-m}\Bigr)
 = \sum_{\lambda} z^{-|\lambda|} \tau_\lambda P_\lambda(q,t).
$$

\begin{lem}\label{lem:app:tau}
$\tau_\lambda$ is given by 
\begin{align*}
 \tau_\lambda = 
 (1-q)(t-1) t^{|\lambda|-1+n(\lambda)} e_1[B_{\lambda}] 
   \dfrac{ \prod_{s \in \lambda \setminus {(1,1)}} (1 - B_s)}
         {c'_\lambda(q,t)}.
\end{align*}
Here we used the symbol $B_\lambda=\{B_s\mid s\in \lambda\}$ defined at 
\eqref{eq:B} and \eqref{eq:Bs}.
\end{lem}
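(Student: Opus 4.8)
The plan is to identify $\tau_\lambda$ with a single specialization of a Macdonald symmetric function, exactly in the spirit of the proof of the lemma containing \eqref{eq:v0} and of Lemma~\ref{lem:B-s}. First I would rewrite the generating series as a Cauchy kernel. Since $\exp\bigl(-\sum_{m>0}(1-t^m)\tfrac{p_m}{m}z^{-m}\bigr)=\exp\bigl(\sum_{m>0}\tfrac1m\tfrac{1-t^m}{1-q^m}p_m[X]\,(q^m-1)z^{-m}\bigr)$, it equals $\Pi(X,Y;q,t)$ with $Y$ the virtual alphabet determined by $p_m[Y]=(q^m-1)z^{-m}$, that is $Y=(q-1)/z$ in plethystic notation. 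Reading off the coefficient of $P_\lambda$ in \eqref{eq:Cauchy} then gives $z^{-|\lambda|}\tau_\lambda=Q_\lambda[(q-1)/z;q,t]=z^{-|\lambda|}Q_\lambda[q-1;q,t]$, hence $\tau_\lambda=Q_\lambda[q-1;q,t]=J_\lambda[q-1;q,t]/c'_\lambda(q,t)$, where $[q-1]$ denotes the algebra homomorphism $p_r\mapsto q^r-1$ and I used \eqref{eq:JPQ} together with \eqref{eq:cc'}. Because the claimed formula already carries the factor $1/c'_\lambda(q,t)$, this reduces the lemma to the purely combinatorial evaluation $J_\lambda[q-1;q,t]=(1-q)(t-1)t^{|\lambda|-1+n(\lambda)}e_1[B_\lambda]\prod_{s\in\lambda\setminus\{(1,1)\}}(1-B_s)$.

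To evaluate $J_\lambda[q-1]$ I would build the content one variable at a time. Writing $q-1=\{q\}+(-\{1\})$ and applying the addition rule $Q_\lambda[A+B]=\sum_\mu Q_{\lambda/\mu}[A]Q_\mu[B]$, the factor $Q_{\lambda/\mu}[\{q\}]$ is nonzero only when $\lambda/\mu$ is a horizontal strip, where it is the single-variable Pieri coefficient, while $Q_\mu[-\{1\}]$ is computed from $f[-X]=(-1)^{\deg f}\omega f[X]$ together with the dualities \eqref{eq:invol:P}, \eqref{eq:inverse:P} and the principal specialization \eqref{eq:spec_ve}; concretely one is led to $Q_\lambda[q-1]=(-1)^{|\lambda|}P_{\lambda'}(t,q)[1-t]$, which I would check on $\lambda=(1),(2),(1,1)$ against the target. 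Equivalently, and more in line with the rest of the paper, I would run an induction on $|\lambda|$ using the integral-form Pieri rule \eqref{eq:Pieri:d}: applying the homomorphism $p_r\mapsto q^r-1$ to $p_1 J_\mu=\sum_\lambda d_{\lambda/\mu}(q,t)J_\lambda$ gives $(q-1)\,J_\mu[q-1]=\sum_{\lambda\supset\mu}d_{\lambda/\mu}(q,t)\,J_\lambda[q-1]$, and the sum over the attachable boxes of $\mu$ is exactly the kind of sum evaluated by the Garsia--Tesler change of variables in Lemma~\ref{lem:dq}. Feeding the product ansatz into this recursion turns the verification into a rational-function identity in the variables $B_s$.

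The hard part will be the collapse of the horizontal-strip sum (equivalently, the verification of the recursion for the product ansatz) onto the compact right-hand side. The delicate feature is the corner box $(1,1)$: since $B_{(1,1)}=q^{0}t^{0}=1$ by \eqref{eq:Bs}, the full product $\prod_{s\in\lambda}(1-B_s)$ vanishes, so the surviving information is first-order, which is precisely what the single factor $e_1[B_\lambda]$ records; getting this corner bookkeeping right --- rather than any individual Pieri factor --- is where the computation concentrates. I would organize it as a partial-fraction/residue computation over the attachable boxes, reusing the variables $X_k=B_{s_k}$ and $Y_k$ of Lemma~\ref{lem:dq}, so that the structure $e_1[B_\lambda]\prod_{s\neq(1,1)}(1-B_s)$ emerges as a single residue. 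As independent cross-checks I would confirm the cases $\lambda=(1),(2),(1,1)$ by direct expansion of $Q_\lambda$ in power sums, and verify compatibility with the transposition symmetry of Lemma~\ref{lem:vlow}: under $\lambda\mapsto\lambda'$ and $q\leftrightarrow t$ the right-hand side must transform as dictated by $\downarrow$, which pins down the power $t^{|\lambda|-1+n(\lambda)}$ and the prefactor $(1-q)(t-1)$.
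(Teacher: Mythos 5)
Your opening reduction is correct and is genuinely different from (and cleaner than) the paper's setup: identifying the generating series with $\Pi(X,Y;q,t)$ for the virtual alphabet $p_m[Y]=(q^m-1)z^{-m}$ does give $\tau_\lambda=Q_\lambda[q-1]=J_\lambda[q-1]/c'_\lambda(q,t)$, and your observation that $B_{(1,1)}=1$ forces the answer to be a first-order datum (the factor $e_1[B_\lambda]$ times the punctured product) is exactly the right diagnostic. But this reduction only repackages the lemma; the entire content is the evaluation $J_\lambda[q-1]=(1-q)(t-1)t^{|\lambda|-1+n(\lambda)}e_1[B_\lambda]\prod_{s\neq(1,1)}(1-B_s)$, and neither of your two routes actually establishes it. Route (a) (splitting $q-1=\{q\}-\{1\}$ and summing over horizontal strips) is plausible but you explicitly defer the collapse of the strip sum, and the intermediate identity $Q_\lambda[q-1]=(-1)^{|\lambda|}P_{\lambda'}(t,q)[1-t]$ you would lean on is itself unproven and does not factor by \eqref{eq:spec_ve}, since $p_r\mapsto 1-t^r$ is not a principal specialization of the form $\frac{1-u^r}{1-q^r}$; so the ``hard part'' you flag is genuinely all of the work.

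Route (b) has a structural flaw that would make the intended induction fail. Applying the evaluation homomorphism $p_r\mapsto q^r-1$ to the Pieri rule \eqref{eq:Pieri:d} gives, for each $\mu\vdash n$, the single relation $(q-1)J_\mu[q-1]=\sum_{\lambda\supset\mu}d_{\lambda/\mu}(q,t)J_\lambda[q-1]$. This expresses a level-$n$ value in terms of level-$(n+1)$ values: there are $p(n)$ equations in $p(n+1)$ unknowns, so the system does not determine the level-$(n+1)$ values from the level-$n$ ones, and ``feeding the ansatz into the recursion'' is only a consistency check, not a proof. The paper avoids this by going through the expansion $R_n=\sum_{m=0}^n(-1)^{n-m}e_{n-m}h_mt^m$ and the identity $R_n=\sum_{r=1}^{n-1}(-1)^{r-1}t^re_rR_{n-r}+(-1)^{n-1}(t^n-1)e_n$, which, via the $e_r$-Pieri rule, yields the recursion \eqref{eq:tau:rec} expressing $\tau_\lambda$ for $\lambda\vdash n$ in terms of $\tau_\mu$ with $|\mu|<n$; that recursion uniquely determines $\tau_\lambda$, so verifying the product ansatz against it is a legitimate induction. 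To complete your argument you would either need to carry out the horizontal-strip collapse in route (a) in full, or replace the $p_1$-Pieri relation by a well-founded recursion of the paper's type.
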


\begin{proof}
We just give a sketch of proof.
Using the generating functions of 
elementary and complete symmetric functions $e_n$'s and $h_n$'s,
we have
\begin{align*}
 \exp\Bigl(-\sum_{n>0}\dfrac{1-t^n}{n}p_n z^n\Bigr)
&
=\exp\Bigl(-\sum_{n>0}\dfrac{p_n}{n} z^n\Bigr)
 \exp\Bigl( \sum_{n>0}\dfrac{p_n}{n} (t z)^n\Bigr)
=\Bigl(\sum_{l\ge0}(-1)^l e_l z^l \Bigr)
 \Bigl(\sum_{m\ge0} h_m t^m z^m \Bigr)
\\
&
=\sum_{n\ge0}z^n 
 \Bigl(\sum_{m=0}^{n} (-1)^{n-m} e_{n-m} h_m t^m \Bigr).
\end{align*}
Thus we have
\begin{align*}
 \sum_{\lambda \vdash n} \tau_\lambda J_\lambda(q,t) 
=\sum_{m=0}^n (-1)^{n-m} e_{n-m} h_m t^m. 
\end{align*}

Then setting 
$$
 R_n := \sum_{m=0}^n (-1)^{n-m} e_{n-m} h_m t^m
$$ 
we have
\begin{align}
\label{eq:R_n}
 R_n = 
 \sum_{r=1}^{n-1} (-1)^{r-1} t^r e_r R_{n-r} 
 + (-1)^{n-1} (t^n-1) e_n. 
\end{align}
This relation follows from the formula 
$\sum_{r=0}^n (-1)^r e_r h_{n-r}=0$
and a direct calculation.

Using this relation \eqref{eq:R_n} and the Pieri formula 
\cite[Chap VI, \S 6, (6.24)]{M}
$P_\mu(q,t) e_r = \sum_{\lambda} \psi'_{\lambda/\mu} P_\lambda$,
we can show the following recursive relation for $\tau_\lambda$.
\begin{align}
\label{eq:tau:rec}
 \tau_\lambda 
= \sum_{r=1}^{n-1}(-1)^{r-1} t^r
   \sum_{\mu \vdash |\lambda|-r} \tau_\mu \psi'_{\lambda/\mu}
  +\delta_{\lambda,(1^n)} (-1)^{n-1} (t^n-1).
\end{align}

Now the statement of Lemma \ref{lem:app:tau} can be shown inductively by the initial formula 
$\tau_{(1)}=t-1$ and the recursive formula \eqref{eq:tau:rec}.
In fact, for $\lambda = (n)$, \eqref{eq:tau:rec} reads
$\tau_{(n)} = t \tau_{(n-1)}\psi'_{(n)/(n-1)}$.
Thus we immediately have $\tau_{(n)} = t^{n-1}(t-1)$.
One can perform the induction on the length of $\lambda$.
We omit the detail.
\end{proof}

By \eqref{eq:app:ip},
the coefficient of $z^{-|\lambda|}$ in \eqref{eq:app:L+J}
is given by 
$\tau_\lambda \left<P_\lambda,J_\lambda\right>_{q,t}
 =\tau_\lambda c'_\lambda(q,t)$.
Then by Lemma \ref{lem:app:tau} we have

\begin{prop}
\begin{align*}
 \Lambda^{+}_{|\lambda|} J_{\lambda}(q,t)
 = k (q/t)^{|\lambda|/2}
  (1-q)(t-1) t^{|\lambda|-1+n(\lambda)} e_1[B_{\lambda}] 
  \prod_{s \in \lambda \setminus {(1,1)}} (1-B_s)
\end{align*}
\end{prop}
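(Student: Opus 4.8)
The plan is to follow the two-step reduction indicated just before the statement: first cut $\Lambda^{+}_{|\lambda|}J_\lambda(q,t)$ down to a single scalar, and then evaluate that scalar through the Macdonald pairing. First I would note that the creation half $\exp\bigl(\sum_{n\ge1}\tfrac{1-t^{-n}}{1+(q/t)^n}\tfrac{p_n}{n}z^n\bigr)$ of $\Lambda^{+}(z)$ contributes only nonnegative powers of $z$ and only raises the symmetric-function degree, while $J_\lambda(q,t)$ is homogeneous of degree $|\lambda|$. Hence in $\Lambda^{+}(z)J_\lambda(q,t)$ the coefficient of $z^{-|\lambda|}$ can receive a contribution solely from the constant ($z^0$) term of the creation half together with the top degree-lowering term of the annihilation half $\exp\bigl(-\sum_{m\ge1}(1-q^m)\partial_{p_m}z^{-m}\bigr)$. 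This reduces $\Lambda^{+}_{|\lambda|}J_\lambda(q,t)$ to the degree-zero (hence scalar) component of \eqref{eq:app:L+J}, that is, to the coefficient of $z^{-|\lambda|}$ there, which is precisely the reduction asserted in the text.

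Second, I would extract that scalar by pairing with the constant symmetric function $1$ under $\langle\cdot,\cdot\rangle_{q,t}$ and invoking the adjunction \eqref{eq:app:ip}, which trades the exponential of differential operators for an exponential of multiplication operators. The resulting series $\exp\bigl(-\sum_{m>0}(1-t^m)\tfrac{p_m}{m}z^{-m}\bigr)$ is, by the defining expansion of $\tau_\mu$, equal to $\sum_{\mu} z^{-|\mu|}\tau_\mu P_\mu(q,t)$. Pairing with $J_\lambda(q,t)$ and isolating the coefficient of $z^{-|\lambda|}$ forces $|\mu|=|\lambda|$, and the $P$--$Q$ duality together with \eqref{eq:JPQ} gives $\langle P_\mu(q,t),J_\lambda(q,t)\rangle_{q,t}=c'_\lambda(q,t)\,\delta_{\mu,\lambda}$. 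Therefore the coefficient of $z^{-|\lambda|}$ equals $\tau_\lambda\,c'_\lambda(q,t)$.

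Finally I would substitute the closed form of $\tau_\lambda$ furnished by Lemma \ref{lem:app:tau}. The denominator $c'_\lambda(q,t)$ appearing there cancels against the factor just produced, leaving
\begin{align*}
\Lambda^{+}_{|\lambda|}J_\lambda(q,t)
=(1-q)(t-1)\,t^{|\lambda|-1+n(\lambda)}\,e_1[B_\lambda]
 \prod_{s\in\lambda\setminus\{(1,1)\}}(1-B_s),
\end{align*}
which is exactly the scalar $\tau_\lambda c'_\lambda(q,t)$ in fully factored form; the overall constant $k(q/t)^{|\lambda|/2}$ displayed in the statement is the normalization attached to the mode $\Lambda^{+}_{|\lambda|}$ and is carried along unchanged by the computation.

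The genuinely substantive input here is Lemma \ref{lem:app:tau}, whose own proof runs through the recursion \eqref{eq:R_n} for $R_n=\sum_{m=0}^n(-1)^{n-m}e_{n-m}h_m t^m$ and the induced recursion \eqref{eq:tau:rec} for $\tau_\lambda$ via the Pieri rule, with base case $\tau_{(1)}=t-1$. Within the proposition itself the only points requiring care are the degree-counting argument that legitimizes discarding the creation half, and the clean use of orthogonality to single out $\mu=\lambda$; once Lemma \ref{lem:app:tau} is in hand, the remainder is bookkeeping, so the main obstacle is entirely concentrated in the lemma rather than in the proposition.
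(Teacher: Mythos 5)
Your proof follows exactly the paper's route: reduce $\Lambda^{+}_{|\lambda|}J_\lambda(q,t)$ by degree counting to the coefficient of $z^{-|\lambda|}$ in \eqref{eq:app:L+J}, pass to the adjoint multiplication operator via \eqref{eq:app:ip}, read off the scalar $\tau_\lambda\langle P_\lambda,J_\lambda\rangle_{q,t}=\tau_\lambda c'_\lambda(q,t)$, and substitute the closed form of $\tau_\lambda$ from Lemma \ref{lem:app:tau}. This is correct and essentially identical to the paper's argument, including the way both you and the paper simply carry the prefactor $k(q/t)^{|\lambda|/2}$ as the normalization attached to the mode.
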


Similarly we have 
$$
 \exp\Bigl(\sum_{m>0}(1-t^m)\dfrac{p_m}{m} p_m z^{-m}\Bigr)
 = \sum_{\lambda} z^{-|\lambda|} \widetilde{\tau}_\lambda P_\lambda(q,t) 
$$
with 
\begin{align*}
 \widetilde{\tau}_\lambda = 
 (1-q)(1-t) q^{|\lambda|-1}t^{n(\lambda)} e_{-1}[B_\lambda]
   \dfrac{ \prod_{s \in \lambda \setminus {(1,1)}} (1 -  B_s)}
         { c'_\lambda(q,t)}.
\end{align*}
Here we used the symbol 
$e_{-1}[X] := X_1^{-1} + X_2^{-1} + \cdots$.

Therefore we have 

\begin{prop}
\begin{align*}
 \Lambda^{-}_{|\lambda|} J_{\lambda}(q,t)
 = k^{-1} (q/t)^{-|\lambda|/2}
 (1-q)(1-t) q^{|\lambda|-1}t^{n(\lambda)} e_{-1}[B_\lambda]
  \prod_{s \in \lambda \setminus {(1,1)}} (1 -  B_s)
\end{align*}
\end{prop}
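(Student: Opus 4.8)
The plan is to mirror the derivation of the companion formula for $\Lambda^{+}_{|\lambda|}J_\lambda(q,t)$, with $\tau_\lambda$ replaced by $\widetilde{\tau}_\lambda$. First I would note that $\Lambda^{-}_{|\lambda|}$ lowers the degree of a symmetric function by $|\lambda|$, so $\Lambda^{-}_{|\lambda|}J_\lambda(q,t)$ is a scalar in $\bb{F}$. Since the creation half of $\Lambda^{-}(z)$ only raises degree and hence contributes solely through its constant term $1$, the whole computation reduces to extracting the coefficient of $z^{-|\lambda|}$ in $\exp\bigl(\sum_{m>0}(1-q^m)\partial_{p_m}z^{-m}\bigr)J_\lambda(q,t)$, exactly as in \eqref{eq:app:L+J} but with the opposite overall sign in the exponent.

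Next I would pass to the Macdonald inner product. Pairing against $1$ and moving the differential operator to the other factor via the adjunction between $(1-q^m)\partial_{p_m}$ and multiplication by $(1-t^m)p_m/m$ under $\langle\cdot,\cdot\rangle_{q,t}$, the coefficient in question becomes $\bigl\langle \exp\bigl(\sum_{m>0}(1-t^m)\tfrac{p_m}{m}z^{-m}\bigr),\,J_\lambda(q,t)\bigr\rangle_{q,t}$, the exact counterpart of \eqref{eq:app:ip}. Expanding the exponential in the Macdonald $P$-basis defines $\widetilde{\tau}_\lambda$ through $\exp\bigl(\sum_{m>0}(1-t^m)\tfrac{p_m}{m}z^{-m}\bigr)=\sum_\lambda z^{-|\lambda|}\widetilde{\tau}_\lambda P_\lambda(q,t)$, and the orthogonality $\langle P_\lambda,J_\lambda\rangle_{q,t}=c'_\lambda(q,t)$ then yields $\widetilde{\tau}_\lambda c'_\lambda(q,t)$, up to the normalization scalar $k^{-1}(q/t)^{-|\lambda|/2}$ carried by $\Lambda^{-}_{|\lambda|}$ in the bosonized current, which is tracked exactly as for $\Lambda^{+}_{|\lambda|}$.

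It remains to compute $\widetilde{\tau}_\lambda$ in closed form, the analogue of Lemma \ref{lem:app:tau}. Writing $u=z^{-1}$ and factoring $\exp\bigl(\sum_{m>0}(1-t^m)\tfrac{p_m}{m}u^m\bigr)=\bigl(\sum_j h_j u^j\bigr)\bigl(\sum_l(-1)^l e_l t^l u^l\bigr)$ gives $\sum_{\lambda\vdash n}\widetilde{\tau}_\lambda P_\lambda=\widetilde{R}_n:=\sum_l(-1)^l t^l e_l h_{n-l}$; only the overall sign of the exponent differs from the $\tau$-case, so $\widetilde{R}_n$ satisfies the recursion $\widetilde{R}_n=\sum_{r=1}^{n-1}(-1)^{r-1}e_r\widetilde{R}_{n-r}+(-1)^{n-1}(1-t^n)e_n$, the counterpart of \eqref{eq:R_n} but now without the extra factor $t^r$ inside the sum. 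Feeding this into the Pieri rule $P_\mu e_r=\sum_\lambda \psi'_{\lambda/\mu}P_\lambda$ produces a recursion for $\widetilde{\tau}_\lambda$ analogous to \eqref{eq:tau:rec}, with initial value $\widetilde{\tau}_{(1)}=1-t$, and an induction on the length of $\lambda$ delivers the claimed product formula for $\widetilde{\tau}_\lambda$.

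Finally, substituting $\widetilde{\tau}_\lambda c'_\lambda(q,t)=(1-q)(1-t)q^{|\lambda|-1}t^{n(\lambda)}e_{-1}[B_\lambda]\prod_{s\in\lambda\setminus\{(1,1)\}}(1-B_s)$ and reinstating the scalar $k^{-1}(q/t)^{-|\lambda|/2}$ yields the stated identity. The main obstacle is this last inductive step. Relative to $\tau_\lambda$ the roles of the $q$- and $t$-powers are interchanged and the factor $e_1[B_\lambda]$ is replaced by $e_{-1}[B_\lambda]=\sum_s B_s^{-1}$, so the telescoping over the attachable boxes in the Pieri recursion genuinely has to be redone rather than quoted from the $\tau$-computation; keeping track of the emergence of the inverse power sum $e_{-1}[B_\lambda]$ and of the shift $t^{|\lambda|-1+n(\lambda)}\mapsto q^{|\lambda|-1}t^{n(\lambda)}$ is where the care is required.
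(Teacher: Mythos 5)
Your proposal is correct and follows essentially the same route as the paper, which proves this proposition by running the $\Lambda^{+}_{|\lambda|}J_\lambda$ computation with the sign of the annihilation exponent reversed: reduce to the coefficient of $z^{-|\lambda|}$, pass to the adjoint under $\langle\cdot,\cdot\rangle_{q,t}$, expand $\exp\bigl(\sum_{m>0}(1-t^m)\tfrac{p_m}{m}z^{-m}\bigr)$ in the $P$-basis to define $\widetilde{\tau}_\lambda$, and establish its product formula by the $e$--$h$ factorization, the resulting recursion, and the Pieri rule. Your modified recursion for $\widetilde{R}_n$ and the initial value $\widetilde{\tau}_{(1)}=1-t$ are correct, and you in fact supply more detail than the paper, which only says ``similarly'' and omits the inductive verification just as you do.
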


Using these Propositions, we can give another proof 
of the Whittaker condition $T_n v_{G,n} = 0$.
By the relation \eqref{eq:qvir:bosonization} of $T(z)$ and $\Lambda^{\pm}(z)$, 
the definition \eqref{eq:AY} of $v_{G,n}$,
the condition is equivalent to the equation
\begin{align*}
 \sum_{\lambda \, \vdash n}
 \dfrac{ q^{n(\lambda')} t^{n(\lambda)} \prod_{s \in \lambda \setminus {(1,1)}} (1 -  B_s)}
       {c_\lambda(q,t) c'_\lambda(q,t)}
 \dfrac{u e_1[B_\lambda] - e_{-1}[B_\lambda]}{\prod_{s \in \lambda } (1 - u B_s)} = 0.
\end{align*}
Here $u$ is an arbitrary indeterminate.
This formula can be shown by a specialization of Cauchy kernel.
We omit the detail.

%
%


\end{document}